 \def\@seccntformat#1{\csname the#1\endcsname.\quad}
\theoremstyle{plain}
\newtheorem{theorem}{Theorem}
\newtheorem{proposition}[theorem]{Proposition}
\newtheorem{corollary}[theorem]{Corollary}
\newtheorem{lemma}[theorem]{Lemma}
\newtheorem*{problem}{Problem}
\newcounter{mainresult}
\newtheorem{propositionA}[mainresult]{Proposition}
\newtheorem{theoremA}[mainresult]{Theorem}
\newtheorem{corollaryA}[mainresult]{Corollary}
\theoremstyle{definition}
\newtheorem{definition}[theorem]{Definition}
\theoremstyle{remark}
\newcommand\mL{L\kern-0.08cm\char39}
\newcommand{\mr}[1]{MR#1}
\newcommand{\Lip}{\operatorname{Lip}}
\newcommand{\length}[1]{\HHh^1(#1)}
\newcommand{\lengthd}[2]{\HHh^1_{#1}(#2)}
\newcommand{\IT}{I^{\mathcal{T}}}
\newcommand{\IED}{I^{\mathcal{ED}}}
\newcommand{\tof}[1]{\stackrel{#1}{\longrightarrow}}
\newcommand{\tofsmall}[1]{\stackrel{#1}{\rightarrow}}
\newcommand{\id}{{\rm id}}
\newcommand{\Endpoints}{E}
\newcommand{\End}{E}
\newcommand{\Branchpoints}{B}
\newcommand{\Branch}{B}
\newcommand{\Ordinary}{O}
\newcommand{\NNN}{\mathbb N}
\newcommand{\ZZZ}{\mathbb Z}
\newcommand{\RRR}{\mathbb R}
\newcommand{\SSS}{\mathbb S}
\newcommand{\AAA}{\mathbb{A}}
\newcommand{\CCC}{\mathbb{C}}
\newcommand{\AAa}{\mathcal{A}}
\newcommand{\BBb}{\mathcal{B}}
\newcommand{\CCc}{\mathcal{C}}
\newcommand{\DDd}{\mathcal{D}}
\newcommand{\PPp}{\mathcal{P}}
\newcommand{\RRr}{\mathcal{R}}
\newcommand{\TTt}{\mathcal{T}}
\newcommand{\TTx}{\mathcal{S}}
\newcommand{\HHh}{\mathcal{H}}
\newcommand{\oo}{o}
\newcommand{\Poo}{\PPp_{\operatorname{small}}}
\newcommand{\comment}[1]{}
\newcommand{\closure}[1]{\overline{#1}}
\newcommand{\interiorRel}[2]{\operatorname{int}_{#1}(#2)}
\newcommand{\COnSubsets}[1]{\tilde{C}(X)}
\newcommand{\eps}{\varepsilon}
\newcommand{\lev}[1]{\operatorname{lev}(#1)}
\numberwithin{equation}{section}
\newcommand{\abs}[1]{\lvert#1\rvert}
\newcommand{\card}[1]{\##1}
\newcommand{\toPar}[1]{\stackrel{#1}{\rightarrow}}
\begin{document}


\title[Topological entropy of transitive dendrite maps]
 {Topological entropy of transitive dendrite maps}

\author[V. \v Spitalsk\'y]{Vladim\'\i r \v Spitalsk\'y}
\address{Department of Mathematics, Faculty of Natural Sciences,
          Matej Bel University, Tajovsk\'eho 40, 974 01 Bansk\'a Bystrica,
          Slovakia}
\email{vladimir.spitalsky@umb.sk}
\thanks{The author was supported by the Slovak Research and Development Agency
under the contract No.~APVV-0134-10
and by the Slovak Grant Agency under the grant
VEGA~1/0978/11.}

\subjclass[2010]{Primary 37B05, 37B20, 37B40; Secondary 54H20}

\keywords{Topological entropy, transitive system, exact system, dendrite.}

\begin{abstract}
We show that
every dendrite $X$ satisfying the condition that
no subtree of $X$ contains all free arcs
admits a transitive, even exactly Devaney chaotic map
with arbitrarily small entropy.
This gives a partial
answer to a question of Baldwin from 2001.
\end{abstract}

\maketitle

\thispagestyle{empty}


\section{Introduction}\label{S:intro}
Let $X$ be a compact metric space. By a \emph{dynamical system} on
$X$ we mean a pair $(X,f)$, where $f:X\to X$ is a continuous
(not necessarily injective) map.
The topological entropy of $f$, defined in \cite{AKM}, will be
denoted by $h(f)$.
This is a fundamental quantitative
characteristic of a dynamical system. On the other hand, a dynamical
system can possess some qualitative dynamical properties, such as
transitivity, weak and strong mixing and exactness (see
Section~\ref{S:preliminaries} for definitions). It is natural to
study the connection between these qualitative properties and
entropy. The first result in this direction, due to Blokh
\cite{Blo82}, says that the topological entropy of any transitive
dynamical system on the interval is at least $(1/2) \log2$; moreover, he
showed that this bound is the best possible. Hence we have
$\IT([0,1])=(1/2)\log2$, where $\IT(X)$ is defined by
$$
 \IT(X)=\inf\{
  h(f):\ f:X\to X \text{ is transitive}
 \}.
$$
Here, the infimum is \emph{attainable} since there is a transitive interval map
with the entropy $(1/2)\log2$.
In
1987, Blokh \cite{Blo87} showed that transitivity implies
positiveness of entropy for maps on any graph which is not a circle.
Later Alsed\`a,
Kolyada, Llibre and Snoha \cite{AKLS} showed that for an $n$-star $S$
it holds that $\IT(S)=(1/n)\log2$.
For arbitrary tree $T$, Alsed\`a, Baldwin, Llibre and
Misiurewicz \cite{ABLM} showed that $\IT(T)\ge (1/n)\log2$, where $n$
is the number of end points of the tree. For some classes of trees
this bound is the best possible (see \cite{Ye} and \cite{Bal01})
but in general it is still an open problem to determine the value of
$\IT(T)$ for arbitrary tree $T$. By Alsed\`a, del R\'io and Rodr\'iguez
\cite{ARR}, for any graph $X$
which is not a tree it holds that $\IT(X)=0$; they even showed that
$\IED(X)=0$, where
$$
 \IED(X)=\inf\{
  h(f):\ f:X\to X \text{ is exactly Devaney chaotic}
 \}.
$$
By \cite[Lemma~8.5]{HKO11}, every tree admitting a transitive $P$-linear Markov
map admits, for every $\eps>0$, also an exact $P'$-linear Markov map $f'$ with
$h(f')<h(f)+\eps$. Thus, for trees $T$, the upper bounds for $\IT(T)$ from \cite{Ye}
are true also for $\IED(T)$; in particular, $\IED(T)=(1/n)\log 2$
for every $T\in \TTt_0$ (for the definitions of Ye's classes $\TTt_i$ of trees, see
e.g.~Subsection~\ref{SS:trees}).

A natural class of compact metric spaces containing all trees is the
class of dendrites. Recall that a \emph{dendrite} is a locally connected
continuum containing no
simple closed curve. Dendrites possess many properties of trees:
they have the fixed point property, they are absolute retracts, they
are embeddable in the plane, etc. On the other hand, dendrites can be much more
complicated than trees; for example there is an uncountable system of
pairwise non-homeomorphic dendrites. Recently, the study of dynamics
on dendrites attracted many authors; see for instance
\cite{Il98, Kato98, AEO07, Baldwin07, Ma09, SWZ10, Na12} and other references therein.
If $X$ is a dendrite we denote by
$\Endpoints(X)$ the set of all \emph{end points} of $X$ (that is,
points $x\in X$
such that $X\setminus\{x\}$ is connected) and
by $\Branchpoints(X)$ the set of all \emph{branch points} of $X$
(that is, points $x\in X$
such that $X\setminus\{x\}$ has at least three components).
Notice that, for a non-degenerate dendrite $X$, $\Branchpoints(X)$ is always countable
and it is empty if only if
$X$ is an arc. Further, the set of end points $\Endpoints(X)$
is always non-empty and it is finite if and only if $X$ is a tree.

In this paper we study infima $\IT(X),\IED(X)$ for
non-degenerate dendrites which are not trees.
Every such dendrite $X$ has infinitely many end points and so,
for every $n$, it contains an $n$-star $S_n$ or an $n$-comb $C_n$
(see Corollary~\ref{C:treesInDendrite}). Since
$\IT(S_n)=(1/n)\log 2$ by \cite{AKLS} and $\IT(C_n)=(1/n)\log 2$ for $n=2^r$ by \cite{Ye},
the dendrite $X$ contains subtrees admitting transitive maps with arbitrarily small
entropy. So it is natural to expect that the answer to the following Baldwin's question
is affirmative.

\begin{problem}[\cite{Bal01}]
Is it true that $\IT(X)=0$ for every non-degenerate dendrite $X$ which is not a tree?
\end{problem}

The purpose of the present paper is to give a partial affirmative answer to this question.
Roughly speaking, we show that $\IT(X)=\IED(X)=0$ provided $X$ contains ``sufficiently
many'' free arcs. Recall that an \emph{arc} is a homeomorphic image of $[0,1]$ and a
\emph{free arc} is an arc $A$ such that every non-end point of $A$ is an interior point of $A$
in the topology of $X$.

\begin{theoremA}\label{T:main}
 If $X$ is a non-degenerate dendrite such that no subtree of it contains all
 free arcs of $X$
 then $X$ admits a transitive, even exactly Devaney chaotic
 map with arbitrarily small entropy. Hence
 $$
  \IT(X)=\IED(X)=0,
 $$
 where the infima are not attainable.
\end{theoremA}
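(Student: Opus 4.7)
The plan is to approximate $X$ by an increasing sequence of subtrees $T_1\subset T_2\subset\cdots$ with dense union, place on each $T_n$ an exactly Devaney chaotic piecewise-linear Markov map $g_n$ with entropy below $1/n$, and assemble these into a single continuous map $f\colon X\to X$ via a uniform limit. The free-arc hypothesis enters precisely at the exhaustion step: since no subtree of $X$ contains all free arcs, one recursively picks a free arc $A_n$ of $X$ with $A_n\not\subset T_{n-1}$ and lets $T_n$ be the convex hull in $X$ of $T_{n-1}\cup A_n$, which is again a subtree. Because $X$ is a compact dendrite, only finitely many components of $X\setminus T_n$ have diameter exceeding a prescribed $\delta>0$, so with a greedy choice of free arcs the $T_n$ become dense in $X$ while $\#\End(T_n)\to\infty$.

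On each $T_n$, having at least $N_n\to\infty$ endpoints, the tree results cited in the introduction---Ye's upper bounds for $\IT$ on the classes $\mathcal{T}_i$ combined with the transitive-to-exact upgrade of \cite[Lemma~8.5]{HKO11}---provide an exactly Devaney chaotic piecewise-linear Markov map $g_n\colon T_n\to T_n$ with $h(g_n)<1/n$. I would extend $g_n$ to $\tilde g_n\colon X\to X$ using the first-point retraction $r_n\colon X\to T_n$: on each component $C$ of $X\setminus T_n$, attached to a point $p\in T_n$, define $\tilde g_n|_C$ to be a piecewise-linear map sending $C$ into a short arc near $g_n(p)$ inside $T_{n+1}$, chosen so that $\tilde g_n$ is continuous and $\sup_{x\in X}\dist(\tilde g_n(x),\tilde g_{n+1}(x))<2^{-n}$. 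The resulting sequence is Cauchy in $C(X,X)$ and converges uniformly to some continuous $f\colon X\to X$.

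The hardest part will be to verify that the limit $f$ inherits exactness, transitivity, and the entropy bound. For entropy, the construction produces an approximate semiconjugacy $r_n\circ f\approx g_n\circ r_n$, which combined with a careful diameter analysis of the residual set $X\setminus\bigcup_n T_n$---showing that it carries no positive-entropy dynamics---will yield $h(f)\le\lim h(g_n)=0$. For exactness, any non-empty open $U\subset X$ meets $T_n$ in a non-empty open arc once $n$ is large; exactness of $g_n$ on $T_n$ together with the near-invariance of $T_n$ under $\tilde g_m$ for $m\ge n$ then forces some iterate $f^k(U)$ to cover $T_n$, and density of $\bigcup_n T_n$ gives $\overline{f^k(U)}=X$. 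Finally, the non-attainability of $\IT(X)$ and $\IED(X)$ would follow from a horseshoe-type lower bound: $X$ contains a free arc $A$, and any transitive $f\colon X\to X$ must eventually stretch a subarc of $A$ across $A$ multiple times (otherwise $A$ would be almost invariant under forward iteration, contradicting the existence of a dense orbit hitting every branch of $X$); Blokh's $(1/2)\log 2$ bound applied to the induced subsystem then gives $h(f)>0$.
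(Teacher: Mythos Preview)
Your plan has a structural contradiction at its core. You aim to produce a \emph{single} limit map $f$ that is exactly Devaney chaotic and has $h(f)\le\lim h(g_n)=0$. But the second clause of the theorem asserts that the infima are \emph{not attainable}: there is no transitive map on $X$ with zero entropy. So the object you are trying to build cannot exist. What the theorem actually requires is, for every $\varepsilon>0$, a separate exactly Devaney chaotic map $F_\varepsilon:X\to X$ with $0<h(F_\varepsilon)<\varepsilon$; no limiting procedure is involved.

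Even setting this aside, the limit argument would not go through. Topological entropy is not upper semicontinuous on $C(X,X)$ in general, so $h(f)\le\lim h(g_n)$ fails without extra structure (your ``approximate semiconjugacy'' is too weak to force it). Likewise, transitivity and exactness are not preserved under uniform limits, and your exactness step only yields $\overline{f^k(U)}=X$, not $f^k(U)=X$. There is also a geometric issue: the hypothesis ``no subtree contains all free arcs'' does \emph{not} imply that free arcs are dense in $X$ (see Lemma~\ref{L:subtreeFreeArcs} and the remark after it), so your increasing union $\bigcup_n T_n$ built from free arcs need not be dense, and the residual set $X\setminus\overline{\bigcup T_n}$ can be a non-degenerate subdendrite on which you have no control.

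The paper's route is quite different and avoids limits entirely. For a given $\varepsilon>0$ one finds a \emph{single} $n$-star or $n$-comb $T\subset X$ (with $n$ large) whose terminal edges have non-empty interior in $X$; this is what Lemma~\ref{L:subtreeFreeArcs} extracts from the free-arc hypothesis. One then takes a $(P,S)$-linear Markov map $f$ on $T$ with $h(f)$ close to $(1/n)\log 2$, upgrades it to an exact $Q$-linear map $g$ on $T$ via Proposition~\ref{P:exactExtensionToTree}, and extends $g$ to an explicit map $F$ on all of $X$ by splitting $X$ into subcontinua $\tilde A$ indexed by the $Q$-basic arcs and using length-expanding Lipschitz maps on the pieces (Proposition~\ref{P:thmA}). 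The entropy of $F$ is controlled not by a limit but by the $P$-Lipschitz estimate of Proposition~\ref{P:entropyOfPLipschitz}, and exactness of $F$ is verified directly from the combinatorics of $g$. Non-attainability is quoted from \cite{DSS12}: a transitive map on a space with a free arc (and not a disjoint union of circles) must have positive entropy.
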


The class of dendrites satisfying the condition from Theorem~\ref{T:main}
has not been studied in the literature, but it appears to be natural for our
construction. Note that in Lemma~\ref{L:subtreeFreeArcs} we will show that
the condition is satisfied e.g.~for dendrites which are not trees and
which have nowhere dense branch points.

Combining Theorem~\ref{T:main} with the main result of \cite{Sp12b}
we obtain the following corollary. Recall that a continuum $X$ is
\emph{completely regular} provided the only nowhere dense subcontinua of $X$
are degenerate. Since graphs are completely regular, the corollary
generalizes \cite[Theorem~4.1]{ARR}.

\begin{corollaryA}\label{C:totallyRegular}
 If $X$ is a non-degenerate completely regular continuum which is not a tree then
 $\IT(X)=\IED(X)=0$.
\end{corollaryA}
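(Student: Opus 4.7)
The plan is to proceed by a dichotomy on whether $X$ is itself a dendrite.

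\emph{Case~1.} Suppose $X$ is a dendrite. Since $X$ is non-degenerate and not a tree, the task reduces to verifying that $X$ satisfies the hypothesis of Theorem~\ref{T:main}. First I would show that complete regularity of the dendrite $X$ forces $\Branchpoints(X)$ to be nowhere dense: if $\Branchpoints(X)$ were dense in some non-empty open $U\subseteq X$, take any arc $A\subseteq U$; by complete regularity applied to the non-degenerate subcontinuum $A$, the arc $A$ has non-empty interior in $X$, hence contains a free subarc $B$. The relative interior of $B$ is then open in $X$, contained in $U$, and entirely consists of ordinary points of $X$, contradicting the supposed density of $\Branchpoints(X)$ in $U$. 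Once $\Branchpoints(X)$ is nowhere dense, Lemma~\ref{L:subtreeFreeArcs} guarantees that no subtree of $X$ contains every free arc of $X$, so Theorem~\ref{T:main} applies and yields $\IT(X)=\IED(X)=0$.

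\emph{Case~2.} Suppose $X$ is not a dendrite. In this case the conclusion is delivered directly by the main result of \cite{Sp12b}, whose role is precisely to handle completely regular non-degenerate continua that fail to be dendrites, by constructing on them transitive, exactly Devaney chaotic self-maps with arbitrarily small entropy and thereby extending \cite[Theorem~4.1]{ARR} beyond graphs.

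The substantive construction of the low-entropy transitive maps is thus delegated to Theorem~\ref{T:main} (for the dendrite case) and to \cite{Sp12b} (for the non-dendrite case), so the only genuinely new verification required for the corollary is the short observation above that, in a completely regular dendrite, branch points are nowhere dense. I do not expect a serious obstacle: the argument is essentially a one-line consequence of the definition of complete regularity combined with the local structure of a dendrite at a branch point versus at an ordinary point.
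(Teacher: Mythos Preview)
Your proposal is correct and follows essentially the same dichotomy as the paper. In Case~1 you spell out explicitly why complete regularity forces $\Branchpoints(X)$ to be nowhere dense; the paper obtains this directly from Lemma~\ref{L:subtreeFreeArcs} (implication (f)$\Rightarrow$(d)$\Rightarrow$(a)), so your extra paragraph is simply a reproof of part of that lemma. In Case~2 the paper does one small step you omit: it observes that a completely regular continuum which is not a dendrite contains a simple closed curve with non-empty interior, hence a free arc that does not disconnect $X$, and it is this hypothesis (rather than ``completely regular non-dendrite'' per se) that the cited result from \cite{Sp12b} uses. This is a trivial verification, so your plan goes through without difficulty.
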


As a by-product, by combining our results with the constructions from
\cite{Ye} we obtain the following upper estimate of $\IED(X)$
for dendrites $X$ based on the number of end points of special subtrees of $X$.
Here we say that an edge of a tree~$T$ is \emph{terminal} if it contains an end point of $T$.
For the definition of Ye's class $\TTt_0$, see the end of Subsection~\ref{SS:trees}.

\begin{propositionA}\label{P:upperBoundFreeArc}
 If a dendrite $X$
 contains a tree $T$ from Ye's class $\TTt_0$
 with $n$ end points and
 with all terminal edges having non-empty
 interiors in $X$, then
 $$
  \IT(X)\le\IED(X)\le \frac{\log 2}{n}\,.
 $$
 In particular, if $X$ contains a free arc then
 $\IED(X)\le (1/2)\log2$.
\end{propositionA}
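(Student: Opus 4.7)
The plan is to take Ye's transitive map on $T$, convert it to an exact map via \cite[Lemma~8.5]{HKO11}, and extend it to a self-map of $X$ using a variant of the construction underlying Theorem~\ref{T:main} in which $T$ plays the role of a ``seed'' already carrying non-trivial entropy of size $(\log 2)/n$.

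First, since $T\in\TTt_0$ has $n$ end points, I would invoke Ye's result \cite{Ye} to obtain a transitive $P$-linear Markov map $g_0:T\to T$ with $h(g_0)=(\log 2)/n$. Fix $\eps>0$. By \cite[Lemma~8.5]{HKO11}, quoted in the introduction, I then replace $g_0$ by an exact $P'$-linear Markov map $g:T\to T$ with $h(g)<(\log 2)/n+\eps/2$. This $g$ is already exactly Devaney chaotic on $T$.

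Next I would extend $g$ to a continuous self-map $f:X\to X$. Write the open set $X\setminus T$ as a (possibly finite) null sequence of sub-dendrites $D_i$, each attached to $T$ at a single point $p_i\in T$. The hypothesis that every terminal edge of $T$ has non-empty interior in $X$ guarantees that each terminal edge contains an arc open in $X$ and, in particular, that no attachment point $p_i$ lies in such a free arc. I would then run the extension procedure from the proof of Theorem~\ref{T:main} with $g$ playing the role of the base map: inductively modify $g$ on finer and finer Markov refinements so as to incorporate more and more of the $D_i$, arranged so that every intermediate map remains exactly Devaney chaotic and adds less than $\eps/2^{k+1}$ to the entropy at stage $k$. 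The uniform limit $f:X\to X$ is continuous, exactly Devaney chaotic, and satisfies $h(f)\le(\log 2)/n+\eps$; since $\eps$ is arbitrary this yields $\IT(X)\le\IED(X)\le(\log 2)/n$.

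The main obstacle is the extension step: the sub-dendrites $D_i$ cannot simply be collapsed to $g(p_i)$, for that would force $f^k(X)\subseteq T$ for every $k\ge 1$ and destroy transitivity. Instead, a controlled fraction of the image of $g$ must be re-routed through the $D_i$'s via the free arcs in the terminal edges, with the added entropy tracked summably; this is precisely where the freeness hypothesis is used and where the machinery behind Theorem~\ref{T:main} enters. The ``in particular'' statement then follows by specializing to $T=A$: a free arc $A\subseteq X$ is a subtree with $n=2$ end points, belongs to $\TTt_0$, and its single (terminal) edge has non-empty interior in $X$ by freeness of $A$.
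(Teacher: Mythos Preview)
Your proposal has a genuine gap: you have bypassed the one structural hypothesis that makes the paper's extension machinery work. The paper's proof of this proposition is a two-line application of Lemma~\ref{L:YeIsPSLinear} and Proposition~\ref{P:thmA}. Lemma~\ref{L:YeIsPSLinear} extracts from Ye's construction not merely a transitive Markov map on $T$, but a \emph{$(P,S)$-linear} one, with $S=(s_0,s_1,\dots,s_n)$ a cycle ending in the end points of $T$. It is precisely this $S$-cycle that Proposition~\ref{P:thmA} (via Proposition~\ref{P:exactExtensionToTree}) exploits: the cycle is used to thread long chains of free arcs $A_i^j$ into the terminal edges, so that in the resulting exact map $g$ orbits visit the ``complicated'' subcontinua $\tilde A$ (those carrying the attached sub-dendrites of $X\setminus T$) with asymptotic frequency at most $2/(N-5)$. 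That frequency bound is exactly what Proposition~\ref{P:entropyOfPLipschitz} needs to keep the entropy of the extension $F:X\to X$ close to $h(g)$.

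By first replacing the Ye map by an arbitrary exact $P'$-linear map via \cite{HKO11}, you discard the $(P,S)$-structure, and then Proposition~\ref{P:thmA} no longer applies. Your substitute---an inductive scheme absorbing the $D_i$ one at a time with summable entropy increments and passing to a uniform limit---is not what the proof of Theorem~\ref{T:main} actually does (that proof is a single-shot construction through Proposition~\ref{P:thmA}, not an inductive limit), and as stated it is only a wish list: you give no concrete mechanism for routing image through a given $D_i$ with entropy increment below $\eps/2^{k+1}$, and neither exactness nor dense periodicity is preserved under uniform limits in general. Your handling of the ``in particular'' clause is fine.
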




The paper is organized as follows. In the next section we summarize
all the definitions and facts which will be needed later and
we also prove some elementary facts about dendrites.
In Section~\ref{S:markov} we introduce the so-called $(P,S)$-linear Markov
maps on trees and we show how they can be used to define exactly Devaney chaotic
tree maps with small entropy.
In Section~\ref{S:exactOnDendrites} we prove the main results of the paper.
Finally, in two appendices we give simple
constructions of zero-entropy $(P,S)$-linear Markov maps on stars and combs,
and we show, for trees $T$,
an upper bound for $\IED(T)$ which depends only on the number of end points of $T$.


\section{Preliminaries}\label{S:preliminaries}

In this section we
recall basic notations and definitions which are used in the paper.
The cardinality of a set $A$ is denoted by $\card{A}$.
By $\ZZZ$ ($\NNN$, $\NNN_0$) we denote the set of all
(positive, non-negative) integers, by $\RRR$ we denote the set of
all reals and
by $I$ we denote the unit interval $[0,1]$.
For an interval $J$, $\abs{J}$ is the length (that is, the Lebesgue measure)
of it.
The symbol $\sqcup$ denotes a disjoint union.

A \emph{space} is any topological space; it is \emph{degenerate} if
it consists of one point, otherwise it is \emph{non-degenerate}.
A \emph{Cantor space} is a totally disconnected
compact metric space without isolated points.
A set is said to be \emph{regular closed} if it is the
closure of its interior or, equivalently,
if it is the closure of an open set. 

A \emph{dynamical system} is a pair $(X,f)$, where $X$ is
a compact metric space and $f:X\to X$ is a
continuous selfmap of $X$. For non-negative integers $n$
we define the \emph{$n$-th iterate} $f^n$
of $f$ recursively as follows: $f^0$ is the identity map on $X$ and
$f^{n}=f\circ f^{n-1}$ for any $n\in\NNN$.
A subset $M$ of $X$ is called \emph{$f$-invariant}
(\emph{strongly $f$-invariant}) provided $f(M)\subseteq M$ ($f(M)=M$).
A \emph{subsystem} of $(X,f)$ is any dynamical system $(M,f|_M)$ where $M$ is a
non-empty closed $f$-invariant subset of $X$.
If $(X,f)$ is a dynamical system,  a point $x$ of $X$
is called \emph{fixed} if $f(x)=x$ and
\emph{periodic} if $f^p(x)=x$ for some $p\in\NNN$.

The topological entropy of a system $(X,f)$ will be denoted by $h(f)$.
A system $(X,f)$ is called
\emph{transitive} if for every non-empty open sets $U,V$ there is
$n\in\NNN$ with $f^n(U)\cap V\ne\emptyset$.
It is \emph{exact} if for every non-empty open set $U$ there is
	 $n\in\NNN$ with $f^n(U)=X$.
A system $(X,f)$ is \emph{(exactly) Devaney chaotic} if $X$ is infinite
and $f$ is transitive (exact) and has dense periodic points.

\subsection{Perron-Frobenius theory}\label{SS:PerronFrobenius}
Let $M=(m_{ij})_{ij=1}^n$
be an $n\times n$ matrix with real entries.
A sequence $\pi=(p_j)_{j=0}^k$ ($k\ge 1$) of elements
of $\{1,\dots,n\}$ is called a \emph{path} if its \emph{width}
$w(\pi)=\prod_{j=1}^k m_{p_{j-1} p_j}$ is non-zero; the \emph{length} of $\pi$
is $\abs{\pi}=k$. A path $(p_j)_{j=0}^k$ is called a \emph{loop} if $p_0=p_k$.
The matrix $M$ is called \emph{irreducible} if for every $i,j$ there is a path
from $i$ to $j$. If $M$ is irreducible and the greatest common divisor of lengths
of all loops is equal to $1$, $M$ is said to be \emph{primitive}.

By Perron-Frobenius theorem (see e.g.~\cite[Theorems~1.1 and 1.5]{Seneta}),
every non-negative irreducible matrix $M$
has an eigenvalue $\lambda_M>0$ of multiplicity one
such that $\lambda_M\ge \abs{\lambda}$
($\lambda_M> \abs{\lambda}$ if $M$ is primitive)
for every eigenvalue $\lambda\ne\lambda_M$ of $M$; this $\lambda_M$ is called
the \emph{Perron eigenvalue} of $M$. Moreover, there are left and right
eigenvectors of $M$ associated with $\lambda_M$ which are strictly positive.
If $M$ is a general (that is, not necessarily irreducible) non-negative matrix,
the spectral radius $\lambda_M$ of it
(which is equal to the maximal non-negative eigenvalue of $M$)
will be also called the \emph{Perron eigenvalue} of $M$.
Notice that if $M,N$ are non-negative matrices
and $M\le N$ (that is, $m_{ij}\le n_{ij}$ for every $i,j$), then
$\lambda_M\le\lambda_N$.

In the special case when $M$ is an irreducible $01$-matrix, we have $\lambda_M\ge 1$;
moreover, $\lambda_M=1$ if and only if $M$ is a \emph{permutation matrix}
(that is, it has exactly
one entry $1$ in each row and each column and $0$'s elsewhere), see e.g.~\cite[p.~8]{Seneta}.
In particular, if $M$ is a primitive $01$-matrix, then $\lambda_M>1$.


The so-called \emph{rome method} for calculating the characteristic polynomial of
square matrices was proposed in \cite[p.~20--21]{BGMY}.
Let $M=(m_{ij})_{ij=1}^n$
be an $n\times n$ matrix.
A non-empty subset $\RRr\subseteq \{1,\dots,n\}$ is called a \emph{rome}
if for every loop $(p_j)_{j=0}^k$ there is $j$ with $p_j\in\RRr$.
Let $\RRr=\{r_1,\dots,r_k\}$ ($r_i\ne r_j$ for $i\ne j$) be a rome.
A path $(p_j)_{j=0}^k$ is said to be \emph{simple} (or \emph{$\RRr$-simple})
if $p_0,p_k\in\RRr$ and $p_j\not\in\RRr$ for $1\le j<k$.
Define a matrix function $R_{\RRr}$ by
$$
 R_{\RRr}=(r_{ij})_{ij=1}^k,
 \qquad
 r_{ij}(x) = \sum_\pi w(\pi) x^{-\abs{\pi}}
 \quad (x\in\RRR),
$$
where the summation is over all simple paths $\pi$ from $r_i$ to $r_j$.
Let $E$ denote the unit matrix (of an appropriate size). By \cite[Theorem~1.7]{BGMY},
the characteristic polynomial $\chi_M(x)=\det(M-xE)$ of $M$ satisfies
\begin{equation}\label{EQ:romeMethod}
 \chi_M(x) = (-1)^{n-k} x^n \cdot \det(R_{\RRr}(x)-E)
 \qquad
 \text{for every rome }\RRr \text{ and }  x\in\RRR.
\end{equation}
Thus, if $M$ is non-negative with positive spectral radius $\lambda_M$, then
$\lambda_M$ is equal to
the maximal positive root of $\det(R_{\RRr}(x)-E)$.

\subsection{Continua}\label{SS:continua}
A \emph{continuum} is
any connected compactum.
An \emph{arc} is a homeomorphic image of the unit interval $I$; the images of
$0,1$ are called the \emph{end points} of the arc.
An arc $A$ with end points $\{a,b\}$ in a space $X$ is \emph{free} if
$A\setminus\{a,b\}$ is open in $X$. A free arc $A$ in a connected space $X$ is said to
\emph{disconnect} $X$ if $X\setminus \{x\}$ is not connected for every
non-end point $x$ of $A$.
A \emph{circle} or a \emph{simple closed curve} is a homeomorphic image
of the unit circle $\SSS^1$.

Let $X$ be a continuum. A metric $d$ on $X$ is said to be \emph{convex}
provided for every distinct $x,y\in X$
there is $z\in X$ such that $d(x,z)=d(z,y)=d(x,y)/2$.
By \cite[Theorem~8]{BingPartSet} every locally connected
continuum admits a compatible convex metric.
If $X$ is endowed with a convex metric $d$, then for every $a\ne b$ there is an arc
$A$ with end points $a,b$, the length $\lengthd{d}{A}$ of which is
equal to $d(a,b)$; every
such arc will be called \emph{geodesic}.

\subsection{Trees}\label{SS:trees}
A \emph{graph} is a continuum which can be written as the union of finitely many
arcs, any two of which are either disjoint or intersect only
in one or both of their end points.
A \emph{tree} is a graph containing no simple closed curve.
We assume that every tree has associated with it
finitely many points of it, called \emph{vertices}, in such a way
that every end point as well as every branch point is a vertex (so we allow
also vertices of order $2$). The edges of a tree are the free arcs
both end points of which are vertices.
An edge $E$ of a tree $T$ is said to be \emph{terminal} provided
it contains an end point of $T$.

Let $n\ge 3$. Then a tree $T$ is an \emph{$n$-star}
if $T$ has exactly one branch point and this branch point has order $n$.
Further, a tree $T$ is an \emph{$n$-comb} if it has exactly $n-2$ branch points,
every branch point has the order $3$ and all the branch points belong to an arc.
We will also say that a tree $T$ is a \emph{$2$-star} or a \emph{$2$-comb}
if it is an arc.
Notice that $n$-stars and $n$-combs have exactly $n$ end points and
that every subtree of a comb (star) is a comb (star).

The following simple lemma will be used later.

\begin{lemma}\label{L:limitOfI(T_n)-2}
 Let $k\ge 3$ and $p\ge 0$ be integers.
 Let $T$ be a tree with at least $k^{p}$ branch points
 and such that every branch point of $T$
 has the order at most $k$.
 Then $T$ contains a $(p+3)$-comb.
\end{lemma}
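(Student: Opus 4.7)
The plan is to reduce the statement to finding an arc in $T$ carrying many branch points of $T$, and then establish the existence of such an arc by induction in a slightly strengthened form.

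First I would verify the following reduction: if $T$ contains an arc $A$ whose two endpoints are end points of $T$ and in whose interior lie at least $p+1$ branch points $v_1,\ldots,v_{p+1}$ of $T$, then $T$ contains a $(p+3)$-comb. At each $v_i$, since the order of $v_i$ in $T$ is at least $3$ and $A$ uses only two of its incident directions, some off-$A$ edge is available; pick $w_i$ on that edge close enough to $v_i$ so that the short segments $[v_i,w_i]$ are pairwise disjoint and meet $A$ only at $v_i$. Then $T^{*}=A\cup\bigcup_i[v_i,w_i]$ is a subtree of $T$ in which exactly the points $v_1,\ldots,v_{p+1}$ have order $3$ (all lying on the single arc $A$), while the remaining vertices $e,e',w_1,\ldots,w_{p+1}$ are end points, so $T^{*}$ is a $(p+3)$-comb.

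The main work is producing such an arc. I would prove, by induction on $p$, the following strengthening: \emph{for every end point $e$ of $T$ there is an end point $e'\ne e$ of $T$ such that the arc $[e,e']$ in $T$ contains at least $p+1$ branch points of $T$.} The base case $p=0$ follows by letting $v$ be the first branch point of $T$ on the arc out of $e$: since $e$ is an end point of $T$, the component of $T\setminus\{v\}$ containing $e$ is the branch-point-free open arc $[e,v)$, so every other end point $e'$ of $T$ lies on the far side of $v$, and hence $[e,e']$ passes through~$v$.

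For the inductive step with $|B(T)|\ge k^p$ and $p\ge 1$, fix $e$ and let $v$ be the first branch point of $T$ encountered from $e$. As before, the component of $T\setminus\{v\}$ containing $e$ is branch-point-free, so the remaining $\ge k^p-1$ branch points of $T$ are distributed among the $\deg(v)-1\le k-1$ other components. The inequality $(k^p-1)/(k-1)=1+k+\cdots+k^{p-1}\ge k^{p-1}$, together with the pigeonhole principle, yields a component $C$ whose closure $T'=\overline{C}$ contains at least $k^{p-1}$ branch points of $T$. The subtree $T'$ inherits the max-order-$k$ condition on its branch points, and $v$ is an end of $T'$ (since $v$ has a single neighbor in $C$); applying the inductive hypothesis to $T'$ with end $v$ gives an arc from $v$ to some end $e'$ of $T'$ (which, having the same local degree in $T$ as in $T'$, is also an end of $T$) carrying at least $p$ branch points of $T'$. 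Adjoining $v$ itself (a branch point of $T$, though an end of $T'$) and concatenating with the branch-point-free segment $[e,v]$ produces the arc $[e,e']$ in $T$ with $\ge p+1$ branch points of $T$.

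The crux is the choice of $v$ as the \emph{first} branch point from $e$: this makes the inductive extension essentially cost-free, since the concatenation $[e,v]\cup[v,e']$ picks up exactly one new branch point of $T$ (namely $v$) without displacing any of the $p$ branch points already supplied inside $T'$.
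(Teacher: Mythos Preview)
Your proof is correct and follows essentially the same inductive strategy as the paper: locate the first branch point $v$ seen from an end $e$, apply pigeonhole to find a component of $T\setminus\{v\}$ whose closure has enough branch points, and recurse. The only difference is packaging --- you carry an arc with many branch points through the induction (strengthened to start at a prescribed end $e$) and build the comb at the end, whereas the paper carries the comb itself through the induction and must argue that ``we may assume $b$ is an end point of $S$''; your formulation makes the concatenation step marginally cleaner.
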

\begin{proof}
 We prove the lemma by induction. For $p=0$ it is trivial.
 Assume now that the assertion of the lemma holds for some $p\ge 0$;
 we are going to prove that it holds also for
 $p+1$. Let $T$ be any tree with at least $k^{p+1}$ branch points and
 such that the order of every branch point is at most
 $k$. Take an end point $e$ of $X$ and let $b$ be the branch point of $X$ nearest to $e$;
 then $[b,e]$ is a free arc.
 The open set $X\setminus\{b\}$ has at most $k$ components the closures of which
 are trees with the total number of branch points at least $k^{p+1}-1$.
 Hence there is a component $U$ of
 $X\setminus\{b\}$ the closure of which
 has at least $k^{p}$ branch points. Since
 every branch point of $\closure{U}$ has the order at most $k$,
 by the induction hypothesis $\closure{U}$ contains a $(p+3)$-comb
 $S$; we may assume that $b$ is an end point of $S$.
 Let $e'$ be an end point of $X$ contained in a component of $X\setminus\{b\}$ different from both $U$ and $(b,e]$.
 Then $S\cup[b,e]\cup[b,e']$ is a $(p+4)$-comb.
\end{proof}


In the following we recall Ye's classification of trees \cite[p.~298-299]{Ye}.
Let $T_0,\dots,T_n$ be trees, let $e_1,\dots,e_n$ be the end points of $T_0$ and
$e_j'$ be a non-end point of $T_j$ ($j=1,\dots,n$). We say that a tree $T$
is an \emph{extension} of $T_1,\dots,T_n$ over $T_0$
if $T$ is homeomorphic to the free union $\bigoplus_{j=0}^n T_j$ with the pairs
$(e_j,e_j')$ ($j=1,\dots,n$) identified.

Let $r\ge 1$, $n_1,\dots,n_r\ge 2$ be integers and let $T$ be a tree.
We say that $T$ belongs to $\TTt_0^{n_1,\dots,n_r}$ if either $T$ is an $n_1$-star
(if $r=1$) or $T$ is an extension of some
$T_1,\dots,T_{n_r}\in \TTt_{0}^{n_1,\dots,n_{r-1}}$ over an $n_r$-star (if $r\ge 2$).
If $i>0$ we say that $T$ belongs to $\TTt_i^{n_1,\dots,n_r}$
if $T$ is a free union of some $T'\in\TTt_{i-1}^{n_1,\dots,n_r}$ with an arc $A$,
where a non-end point of $T'$ is identified with an end point of $A$.
Notice that a tree $T$ from $\TTt_i^{n_1,\dots,n_r}$ has $n=i+\prod_{l=1}^r n_l$ end points.

Denote by $\TTt_0$ the union of all $\TTt_0^{n_1,\dots,n_r}$ and, for $i\ge 1$,
define inductively $\TTt_i$ as the union of all $\TTt_i^{n_1,\dots,n_r}$
minus $\bigcup_{0\le j<i} \TTt_j$.
Notice that every $n$-star as well as every
$2^r$-comb ($r\ge 1$) belongs to $\TTt_0$; indeed, for $n$-stars choose $r=1, n_1=n$
and for $2^r$-combs put $n_1=\dots=n_r=2$.


\subsection{Dendrites}\label{S:treesInDendrites}
A \emph{dendrite} is a (possibly degenerate)
locally connected continuum containing no simple closed curve.
If $X$ is a dendrite, a point $x\in X$ is called an \emph{end (ordinary, branch) point}
of $X$ if $X\setminus\{x\}$ has one (two, at least three) connected components. The sets
of all end, ordinary and branch points of $X$ are denoted by $\End(X)$, $\Ordinary(X)$
and $\Branch(X)$, respectively.
A dendrite is a tree if and only if it is non-degenerate and the set of end points
is finite.

Notice that dendrites are \emph{uniquely arcwise connected}, that is,
for every $a\ne b$ there is a unique arc with end points $a,b$.
If $X$ is a dendrite and $a,b$ are different points of $X$, the arc
with end points $a$ and $b$ will be denoted by
$[a,b]$. The set $[a,b]\setminus\{a,b\}$ will be denoted by $(a,b)$;
analogously we define $(a,b]$ and $[a,b)$.

Let $X$ be a dendrite and $Y$ be a subcontinuum of $X$. The \emph{first point
retraction} $r=r_Y:X\to Y$ is the map defined by $r(y)=y$ for $y\in Y$
and $r(x)=y_x$ for $x\in X\setminus Y$, where $y_x$ is the unique point of
the boundary of the component of $X\setminus Y$ containing $x$; this map
is well defined and continuous, see e.g.~\cite[Lemma~10.25]{Nad}.

The following two lemmas are simple statements about dendrites.
We prove just the second one since the first one is trivial.

\begin{lemma}\label{L:dendriteWithFiniteBX}
 If $X$ is a non-degenerate
 dendrite with finitely many branch points then either
$X$ is a tree or
$X$ contains an $\omega$-star $S$ which is a union of free arcs.
\end{lemma}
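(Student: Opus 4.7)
The plan is the following. If $X$ is a tree, there is nothing to prove, so assume $X$ is not a tree. Then $\End(X)$ is infinite while $\Branch(X)=\{b_1,\dots,b_k\}$ is finite. For each $e\in\End(X)$ let $b(e)\in\Branch(X)$ be the branch point of $X$ closest to $e$, i.e.\ the unique branch point for which $(b(e),e]$ contains no branch point; this is well defined because $\Branch(X)$ is non-empty and finite. Pigeonholing over $\Branch(X)$ yields a branch point $b$ together with infinitely many distinct endpoints $e_1,e_2,\dots$ such that $b(e_i)=b$ for every $i$.

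The key step is to show that every arc $[b,e_i]$ is free in $X$. Let $V_i$ denote the component of $X\setminus\{b\}$ containing $e_i$; I would prove $V_i=(b,e_i]$. Suppose, towards a contradiction, that $y\in V_i\setminus(b,e_i]$. By the standard median identity in a dendrite, $[y,e_i]\cap[b,e_i]=[m,e_i]$ for some $m\in[b,e_i]$, and I rule out each of the three cases $m\in(b,e_i)$, $m=b$, $m=e_i$. The first produces three arcs $[m,b]$, $[m,e_i]$, $[m,y]$ meeting at $m$ and otherwise pairwise disjoint, so $m$ would be a branch point of $X$ strictly between $b$ and $e_i$, contradicting $b(e_i)=b$. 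The second forces $b\in[y,e_i]$, but path-connectedness of $V_i$ (dendrites are locally connected) implies $[y,e_i]\subseteq V_i\subseteq X\setminus\{b\}$. The third contradicts the standard fact that the endpoint $e_i$ admits arbitrarily small open neighbourhoods with one-point boundary, through which both $[b,e_i]$ and $[y,e_i]$ must pass---so their intersection would strictly exceed $\{e_i\}$. Hence $V_i=(b,e_i]$; being a component of the open set $X\setminus\{b\}$, it is open in $X$, so $(b,e_i)=V_i\setminus\{e_i\}$ is open and $[b,e_i]$ is a free arc.

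It remains to assemble the $\omega$-star. Set $S=\{b\}\cup\bigcup_i[b,e_i]$. Distinct arcs $[b,e_i]$ and $[b,e_j]$ meet only at $b$, since their interiors lie in the disjoint components $V_i\ne V_j$. Compactness of $X$ together with pairwise disjointness of the countably many sets $V_i$ forces $\diam V_i\to 0$: otherwise one could pick points $y_{i_k}\in V_{i_k}$ with $\dist(y_{i_k},b)\ge\eps/2$ for some $\eps>0$ and distinct $i_k$, and a convergent subsequence would have a limit $y^*$ with $\dist(y^*,b)\ge\eps/2>0$ lying in some open component $V_{i_0}$, forcing the tail of the sequence into $V_{i_0}$ and violating disjointness. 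Therefore $S$ is closed in $X$, hence compact, and is precisely the $\omega$-star centred at $b$ whose arms are the free arcs $[b,e_i]$.

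The main obstacle is the identification $V_i=(b,e_i]$, notably the subcase $m=e_i$ which rests on the intrinsic endpoint neighbourhood property of dendrites rather than on the finiteness of $\Branch(X)$; once this identification is in hand, freeness of each $[b,e_i]$ and compactness of $S$ follow routinely from pigeonhole and compactness of $X$.
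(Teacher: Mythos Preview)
Your proof is correct. The paper does not actually prove this lemma; it states ``We prove just the second one since the first one is trivial'' and gives no argument, so there is no approach to compare against.

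A couple of minor remarks. In the compactness step you write that the limit $y^*$ lies ``in some open component $V_{i_0}$''. The limit need not lie in one of \emph{your} components $V_i$; it lies in some component $W$ of $X\setminus\{b\}$, possibly different from all the $V_i$. The argument still goes through verbatim: $W$ is open, the tail of the sequence enters $W$, yet each $y_{i_k}$ lies in $V_{i_k}$ with the $V_{i_k}$ pairwise distinct, so at most one can equal $W$. This is a cosmetic fix, not a gap.

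There is also a slightly shorter route you might find closer to what the paper has in mind by ``trivial'': since $X$ is not a tree but $\Branch(X)$ is finite, a counting argument on orders shows some $b\in\Branch(X)$ has infinite order. Then $X\setminus\{b\}$ has infinitely many components, all but finitely many of which contain no branch point of $X$; each such component is therefore an open arc with endpoint $b$, hence its closure is a free arc, and these assemble into the $\omega$-star directly. Your pigeonhole-on-endpoints argument reaches the same $b$ by a different door and is equally valid; the case analysis on the median $m$ is more than is strictly needed, but it is sound.
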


\begin{lemma}\label{L:dendriteWithInfiniteBX}
 If $X$ is a dendrite with infinitely many branch points then
 at least one of the following holds:
 \begin{enumerate}
	\item[(a)] $X$ contains an arc containing infinitely many branch points;
	\item[(b)] $X$ contains an $\omega$-star which is a union of free arcs.
 \end{enumerate}
\end{lemma}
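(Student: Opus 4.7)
The plan is to assume that alternative (a) fails -- so every arc in $X$ contains only finitely many branch points of $X$ -- and to construct the $\omega$-star in (b). Since $\Branch(X)$ is infinite and $X$ is a compact metric space, I begin by picking a sequence of distinct branch points $b_n\in\Branch(X)$ converging to some $v\in X$. The bulk of the proof will be showing that $v$ has infinite order in $X$; once that is established, constructing a single free arc from $v$ into each component of $X\setminus\{v\}$ yields the desired $\omega$-star.

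To prove that $v$ has infinite order, I argue by contradiction. Suppose $X\setminus\{v\}$ has only finitely many components, so that by the pigeonhole principle some component $C$ contains infinitely many of the $b_n$; after a subsequence I may assume $b_n\in C$ for every $n$. Set $A=[v,b_1]$; by the negation of (a) only finitely many branch points of $X$ lie on $A$, so after one further subsequence I may also assume $b_n\notin A$ for $n\ge 2$. For each such $n$ the first point retraction $r_A(b_n)\in A$ separates in $X$ the three points $v$, $b_1$, $b_n$, so either it coincides with an end point $v$ or $b_1$ of $A$, or else it has three distinct incident directions in $X$ and is a branch point of $X$. In any case $r_A(b_n)$ lies in the finite set $\{v,b_1\}\cup(A\cap\Branch(X))$, and continuity of $r_A$ together with $b_n\to v$ forces $r_A(b_n)=v$ for large $n$, i.e.\ $[v,b_n]\cap[v,b_1]=\{v\}$. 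On the other hand $\overline{C}\setminus\{v\}=C$ is connected, so $v$ is an end point of the subdendrite $\overline{C}$, and hence any two arcs in $\overline{C}$ emanating from $v$ must share a non-degenerate initial segment -- a contradiction.

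Once the infinite order of $v$ is established, I enumerate the (infinitely many) components $\{U_\alpha\}$ of $X\setminus\{v\}$; the same observation shows $v$ is an end point of each subdendrite $\overline{U_\alpha}$. For every $\alpha$ I will find a free arc of $X$ lying in $\overline{U_\alpha}$ and issuing from $v$. If $U_\alpha\cap\Branch(X)=\emptyset$ then every branch point of the dendrite $\overline{U_\alpha}$ would automatically be a branch point of $X$, so $\overline{U_\alpha}$ has no branch points; since $v$ is its end point and it is non-degenerate, it must be an arc $[v,e_\alpha]$, whose interior $U_\alpha\setminus\{e_\alpha\}$ is open in $X$, so the arc is free. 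If $U_\alpha\cap\Branch(X)\ne\emptyset$, then because $v$ is an end point of $\overline{U_\alpha}$ any two arcs from $v$ in $\overline{U_\alpha}$ share a non-degenerate initial segment; combined with finiteness of $\Branch(X)\cap[v,b]$, a short ordering argument produces a single ``first'' branch point $q_\alpha$ of $X$ common to all arcs $[v,b]$ with $b\in U_\alpha\cap\Branch(X)$. The open arc $(v,q_\alpha)$ then contains no branch points, and a second application of the retraction trick, now onto $[v,q_\alpha]$, shows that no branch point of $X$ accumulates to any interior point of $[v,q_\alpha]$. Arcwise convexity of connected subsets of a dendrite then implies that any sufficiently small connected open neighbourhood of an interior point is contained in $[v,q_\alpha]$, so $[v,q_\alpha]$ is a free arc.

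The free arcs obtained in this way lie in pairwise distinct components of $X\setminus\{v\}$ and so intersect only at $v$; taking infinitely many yields the required $\omega$-star. I expect the main technical obstacle to be the infinite-order step: both the first-point retraction trick and the observation that $v$ is always an end point of $\overline{C}$ are essential there.
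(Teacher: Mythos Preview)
Your argument is correct and follows essentially the same route as the paper: pick a convergent sequence of branch points $b_n\to v$ and analyze the situation via the first-point retraction onto an arc through $v$. The paper phrases this as a direct dichotomy (if some component of $X\setminus\{v\}$ contains infinitely many $b_n$, project them onto an arc $[a,v]$ with $a$ an end point of the closure of that component to obtain infinitely many branch points on the arc; otherwise $v$ has infinite order and each component yields a free arc from $v$), whereas you argue contrapositively from the failure of (a). Your free-arc construction in each component is more detailed than the paper's---which simply asserts that a component containing only finitely many branch points of $X$ admits a free arc issuing from $v$---but the underlying ideas coincide.
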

\begin{proof} 
 Since $B(X)$ is infinite there is a sequence of branch points $(b_n)_n$ converging to a point
 $b\not\in\{b_n:\ n\in \NNN\}$ (notice that $b$ need not be a branch point).
 Assume now that there
 is a component $U$ of the set $X\setminus\{b\}$ containing
 infinitely many $b_n$'s; without loss
 of generality we may assume that $b_n\in U$ for every $n$. Define the dendrite $Y$ to
 be the closure of $U$ (that is, $Y=U\cup\{b\}$) and take any end point
 $a$ of $Y$ different from $b$; we show that the arc $A=[a,b]$
 contains infinitely many branch points.
 Indeed, let $r=r_A:Y\to A$ be the first point retraction of $Y$ onto $A$.
 Since $r$ is  continuous, the points $b_n'=r(b_n)\in A$ converge to $r(b)=b$. Notice that
 every $b_n'$ is a branch point of $Y$, hence is a branch point of $X$.
 This is immediate if $b_n\in A$;
 otherwise $b_n\not\in A$, so $b_n'\ne b_n$ and $b_n'$
 is not an end point of $A$ (since $b_n'$ is not an end point of $Y$),
 thus $a,b,b_n$ belong to different components of $Y\setminus\{b_n'\}$.
 We also have that $b_n'\ne b$ for every $n$
 due to the fact that $b$ is an end point of $Y$. Now using the fact that $b_n'\to b$ we
 obtain that $\{b_n':\ n\in\NNN\}$ is an infinite set of branch points
 belonging to the arc $A$.

 If every component of $X\setminus\{b\}$ contains only finitely
 many branch points of $X$ then
 we have two facts.
 First, $X\setminus\{b\}$ has infinitely many components $U_0,U_1,\dots$.
 Second, every component
 $U_i$ contains a point $b_i$ such that $[b,b_i]$ is a free arc in $\closure{U_i}$,
 hence it is a free
 arc in $X$. So in this case we have that $X$ contains an $\omega$-star
 $\bigcup_{i=0}^{\infty} [b,b_i]$ with every $[b,b_i]$ being a free arc.
\end{proof}

\begin{corollary}\label{C:treesInDendrite}
 Let $X$ be a non-degenerate dendrite which is not a tree. Then for every $n$ at least one of
 the following holds:
 \begin{enumerate}
	\item[(a)] $X$ contains an $n$-comb;
	\item[(b)] $X$ contains an $n$-star which is a union of free arcs.
 \end{enumerate}
\end{corollary}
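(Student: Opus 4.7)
The plan is to split on the cardinality of $\Branch(X)$ and invoke the two preceding lemmas. If $\Branch(X)$ is finite, then since $X$ is a non-degenerate dendrite which is not a tree, Lemma~\ref{L:dendriteWithFiniteBX} immediately gives an $\omega$-star $S\subseteq X$ which is a union of free arcs; choosing any $n$ arms of $S$ produces the $n$-star of free arcs required by conclusion~(b).

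Suppose now that $\Branch(X)$ is infinite and apply Lemma~\ref{L:dendriteWithInfiniteBX}. Case~(b) of that lemma again yields an $\omega$-star of free arcs, which is handled as above. In case~(a), there is an arc $A\subseteq X$ with infinitely many branch points of $X$, and we aim to build an $n$-comb. For $n\le 2$ the arc $A$ itself suffices, so assume $n\ge 3$ and pick $n-2$ distinct branch points $b_1,\ldots,b_{n-2}$ of $X$ lying in the interior of $A$. Since each $b_i$ has order at least three in $X$, while the two directions of $A$ at $b_i$ occupy two distinct components of $X\setminus\{b_i\}$ (by unique arcwise connectedness of $X$), there must be a further component $V_i$ of $X\setminus\{b_i\}$ disjoint from $A$. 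Pick any $c_i\in V_i$ and set
$$
 T\;:=\;A\cup\bigcup_{i=1}^{n-2}[b_i,c_i].
$$

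To conclude, I would verify that $T$ is an $n$-comb. The arc $[b_i,c_i]$ lies in $V_i\cup\{b_i\}$ and hence meets $A$ only at $b_i$; similarly, for $i\ne j$ the arcs $[b_i,c_i]$ and $[b_j,c_j]$ are disjoint, since otherwise a median-point argument in the dendrite would produce a simple closed curve in $X$. Consequently $T$ has exactly $n-2$ branch points $b_1,\ldots,b_{n-2}$, each of order three and all lying on~$A$, and its $n$ end points are the two end points of $A$ together with $c_1,\ldots,c_{n-2}$; therefore $T$ is an $n$-comb and conclusion~(a) of the corollary holds. The only step requiring even mild care is the disjointness check, which is a routine consequence of the uniqueness of arcs in dendrites; everything else is a direct reading of the two preceding lemmas.
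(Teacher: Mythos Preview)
Your proof is correct and follows precisely the approach the paper intends: the corollary is stated without proof because it is a direct reading of Lemmas~\ref{L:dendriteWithFiniteBX} and~\ref{L:dendriteWithInfiniteBX}, and you have supplied exactly the routine details (extracting an $n$-star from an $\omega$-star of free arcs, and attaching $n-2$ side-arms to an arc with infinitely many branch points to obtain an $n$-comb). The disjointness of the arcs $(b_i,c_i]$ indeed follows immediately from unique arcwise connectedness, as you note.
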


\begin{lemma}\label{L:subtreeFreeArcs}
For a non-degenerate dendrite $X$ the following are equivalent:
\begin{enumerate}
	\item[(a)] no subtree $T$ of $X$ contains all free arcs of $X$;
	\item[(b)] for every $n\ge 2$ there is a subtree $T$ of $X$ with $n$ end points,
	 the terminal edges of which have non-empty interiors in $X$;
	\item[(c)] for every $n\ge 2$ there is an $n$-star or an $n$-comb $T$ in $X$,
	 the terminal edges of which have non-empty interiors in $X$.
\end{enumerate}
Moreover, the properties (e), (f) are equivalent, they imply (d) and, if
$X$ is not a tree, (d) implies (a)--(c), where
\begin{enumerate}
	\item[(d)] $B(X)$ is nowhere dense (that is, free arcs are dense in $X$);
	\item[(e)] every arc in $X$ has non-empty interior;
	\item[(f)] $X$ is completely regular.
\end{enumerate}
\end{lemma}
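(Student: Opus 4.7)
My plan is to prove $(\mathrm{a}) \Leftrightarrow (\mathrm{b}) \Leftrightarrow (\mathrm{c})$ via the implications $(\mathrm{c}) \Rightarrow (\mathrm{b})$, $(\mathrm{b}) \Rightarrow (\mathrm{c})$, $(\mathrm{a}) \Rightarrow (\mathrm{b})$, $(\mathrm{b}) \Rightarrow (\mathrm{a})$, and then handle the ``moreover'' part. The implication $(\mathrm{c}) \Rightarrow (\mathrm{b})$ is immediate. For $(\mathrm{b}) \Rightarrow (\mathrm{c})$, I combine direct star-extraction with Lemma~\ref{L:limitOfI(T_n)-2}: given $n$, apply (b) to a sufficiently large $N$ to get a subtree $T$ with $N$ end points and free terminal edges. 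If $T$ has a branch point of order $\ge n$, take the $n$-star centered there, whose arms are sub-arcs of terminal edges of $T$ and hence of non-empty interior in $X$. Otherwise all branch points of $T$ have bounded order, and Lemma~\ref{L:limitOfI(T_n)-2} furnishes an $n$-comb inside $T$ whose terminal edges, being unions of edges of $T$ that include its terminal ones, also have non-empty interior in $X$.

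For $(\mathrm{a}) \Rightarrow (\mathrm{b})$ I induct on $n$. The base case $n = 2$ follows because (a) forces $X$ to contain at least one free arc (otherwise every subtree vacuously contains the empty set of free arcs, contradicting (a)), and such an arc is a $2$-end-point subtree with free terminal edge. For the inductive step, given a subtree $T_n$ with $n$ end points and free terminal edges, pick a free arc $A \not\subseteq T_n$ using (a); take an interior point $p \in A \setminus T_n$ and let $b = r(p) \in T_n$ be its first-point retraction. The ``external arm'' of $A$---the sub-arc of $A$ from $b$'s side back to the end point of $A$ lying outside $T_n$, together with any bridging arc in $X \setminus T_n$---attaches to $T_n$ at $b$, yielding an enlarged subtree $T'$ containing a new end point. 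If $b$ is not an end point of $T_n$, then $T'$ has $n+1$ end points and I take $T_{n+1} := T'$; if $b$ happens to be an end point of $T_n$, the count stalls, and I iterate with further free arcs (of which (a) provides an abundance outside any finite subtree) until the attachment lands at an interior point, giving $T_{n+1}$ with $n+1$ end points and free terminal edges.

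For $(\mathrm{b}) \Rightarrow (\mathrm{a})$ I argue by contrapositive: if a subtree $T_0 \subseteq X$ contains every free arc of $X$, then any subtree $T$ of $X$ with free terminal edges has each terminal edge containing a non-degenerate sub-arc of $T_0$. A counting argument via the first-point retraction $r \colon X \to T_0$---exploiting that each end point of $T$ corresponds to a distinct ``exit direction'' from $T_0$ along which the terminal edge must re-enter $T_0$ to host a disjoint free sub-arc---yields a bound $n \le N(T_0)$ depending only on $T_0$, contradicting (b). For the ``moreover'' part: $(\mathrm{e}) \Leftrightarrow (\mathrm{f})$ follows because every non-degenerate subcontinuum of a dendrite contains a non-degenerate arc, so ``every arc has non-empty interior'' coincides with complete regularity (``no non-degenerate subcontinuum is nowhere dense''). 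For $(\mathrm{e}) \Rightarrow (\mathrm{d})$, if $\overline{B(X)}$ had non-empty interior it would contain (by local connectedness) an arc $A$; by (e), $A$ has non-empty interior in $X$, hence contains a free sub-arc $A'$; the open interior of $A'$ is open in $X$ and consists of ordinary (non-branch) points, so cannot lie in the closure of $B(X) \subseteq X \setminus A'$, a contradiction. Finally, when $X$ is not a tree and (d) holds, free arcs are dense in $X$ and cannot be contained in any proper closed subset, in particular not in any proper subtree, giving (a).

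The main obstacle will be the counting argument in $(\mathrm{b}) \Rightarrow (\mathrm{a})$: even though $T_0$ is a finite tree with finitely many vertices, the set of branch points of $X$ on $T_0$ and the boundary of $T_0$ in $X$ may be infinite, so the bound on $n$ cannot come from $T_0$'s vertex count alone. The key is to observe that each end point of $T$ with a free terminal edge corresponds to a distinct ``exit direction'' of $T$ from $T_0$ through which the terminal edge re-enters $T_0$ to host a free arc, and that these disjoint exits are combinatorially controlled by the finite structure of $T_0$; a careful accounting taking into account how distinct terminal edges can coexist without sharing free sub-arcs should provide the required uniform bound.
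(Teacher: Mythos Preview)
Your argument for $(\mathrm{b}) \Rightarrow (\mathrm{a})$ is where you have misjudged the difficulty. You flag the counting argument as the main obstacle, but in fact this implication is almost immediate once you use the remark (made just before the paper's proof) that the terminal edges in (b) may be taken to be free arcs in $X$, not merely to contain free sub-arcs. Suppose a subtree $T_0$ contains every free arc of $X$, and let $T$ be a subtree with $n$ end points whose terminal edges are free arcs. Then every end point of $T$ lies in $T_0$; since $T_0$ is a connected subcontinuum of the uniquely arcwise connected dendrite $X$, it contains every arc between points of $T_0$, hence $T\subseteq T_0$. The elementary fact that a subtree of a tree has no more end points than the ambient tree now gives $n\le\#\Endpoints(T_0)$, contradicting (b) for $n>\#\Endpoints(T_0)$. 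No first-point retraction or ``exit direction'' bookkeeping is needed.

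Your inductive approach to $(\mathrm{a}) \Rightarrow (\mathrm{b})$ has a genuine gap: the clause ``iterate with further free arcs until the attachment lands at an interior point'' is not justified. If every successive free arc you locate happens to attach at an end point of the current tree, you produce an infinite strictly increasing chain of trees each with exactly $n$ end points, and nothing you have written forces this process to terminate. The paper avoids this by arguing by contradiction: assuming $n=\sup_T \#\Endpoints(T)$ over the relevant class is finite, it fixes a maximal such $T$ with $\Endpoints(T)\subseteq\Endpoints(X)$, passes to the minimal subtree $S\subseteq T$ spanning the free arcs of $T$, analyses the components of $X\setminus\Endpoints(S)$ that carry free arcs, shows (by maximality of $n$) that each such component attaches at a distinct end point of $S$ and that its free arcs lie on a single arc, and then assembles $S$ together with these arcs into a subtree of $X$ containing all free arcs, contradicting (a). Your induction could perhaps be rescued, but doing so would require essentially this same maximality analysis.

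The remaining implications and the ``moreover'' part are fine and align with the paper's treatment.
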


Notice that, in the assertions (b) and (c),
instead of the statement ``the terminal edges of which have non-empty interiors in $X$''
we can write ``the terminal edges of which are free arcs in $X$''
(recall that in trees we allow vertices of order $2$).
Notice also that (d) is not equivalent to (e)--(f);
e.g.~any dendrite such that all branch points of it belong to an arc $A$ and are
dense in $A$,
satisfies (d) but not (e)--(f).

\begin{proof}
 For the purpose of the proof let $\TTx$ denote the system of all subtrees $T$ of $X$ such that
 every terminal edge of $T$ has non-empty interior in $X$.

 (c) implies (b) is immediate.
 Also (b) implies (a) is trivial; to show this assume, on the contrary,
 that (b) holds and there is a tree $T$ containing all free arcs of $X$.
 Fix any $n> \card E(T)$ and
 let $T_n$ be a subtree of $X$ with $n$ end points such that the terminal edges of $T_n$
 are free arcs in $X$. Then $T$ contains every end point of $T_n$ and,
 being uniquely arcwise connected, it has at least $n$ end points --- a contradiction.

 The fact that (b) implies (c) follows from Lemma~\ref{L:limitOfI(T_n)-2}.
 In fact, for $n=2,3$ the assertion is trivial,
 so let $n\ge 4$ and let $T\in \TTx$ be a tree with
 $m\ge (n-1)^{n-1}$ end points.
 If $T$ has a vertex of order at least $n$
 then it contains an $n$-star $S$; we may assume that
 $E(S)\subseteq E(T)$. Then $S\in\TTx$. Otherwise by Lemma~\ref{L:limitOfI(T_n)-2}
 we have that $T$ contains an $(n+2)$-comb $S'$;
 we may again assume that $E(S')\subseteq E(T)$ and
 so $S'\in\TTx$.

 Now we prove that (a) implies (b).
 For a subtree $T$ of $X$ let $F_T$ denotes the union of all free arcs $A\subseteq T$.
 Assume that (a) is true;
 to show (b) it suffices to prove
 that the number of end points of trees $T\in\TTx$
 is not bounded from above. Assume, on the contrary,
 that $n=\sup_{T\in\TTx} \card E(T) <\infty$. Take $T\in\TTx$ with $n=\card E(T)$;
 we may assume that $E(T)\subseteq E(X)$. Let $S$ be the minimal subtree of $T$ containing
 $F_T$; then $\card E(S)=n$ and $S\in\TTx$. Denote by $x_1,\dots,x_n$
 the end points of $S$.
 Let $(V_k)_{k\in K}$ be the (countable) system of (open) components of
 $X\setminus \Endpoints(S)$ disjoint from $S$
 and let $x_{m_k}$ be the boundary point of $V_k$ ($k\in K$).
 Let $K'\subseteq K$ be the set
 of all $k$'s such that $Y_k=\closure{V}_k$ has a free arc. Since, by (a),
 $S$ does not contain all free arcs
 of $X$, the set $K'$ is non-empty. Moreover, if
 $k\in K'$ then $x_{m_k}\in \Endpoints(S)$ and if
 $k\ne k'$ belong to $K'$
 then $x_{m_k}\ne x_{m_{k'}}$ since
 otherwise we can easily construct a tree $S'\supseteq S$ from $\TTx$
 with than $(n+1)$ end points. An analogous argument shows that for every $k\in K'$
 there is an arc $A_k$ with $x_{m_k}\in\Endpoints(A_k)$
 containing all free arcs of $Y_k$. But then
 $S' = S\cup \bigcup_{k\in K'} A_k$ is a subtree of $X$ (use that $K'$ is finite)
  containing all free arcs of $X$ --- a contradiction.

 To show that (e) implies (d) assume that $B(X)$ is not nowhere dense. Take a non-empty
 connected open subset $V\subseteq \closure{B(X)}$ and put $Y=\closure{V}$.
 Since $B(Y)=B(X)\cap V$
 and $B(X)$ is dense in $V$ we have that $\closure{B(Y)}=Y$. Hence any arc $A\subseteq Y$
 has empty interior in $Y$ and so it has empty interior in $X$.

 The fact that (f) implies (e) is immediate since every arc $A\subseteq X$ is a
 non-degenerate subcontinuum of $X$.
 The condition (e) implies (f) since if $Y$
 is a non-degenerate nowhere dense subcontinuum of $X$
 then it contains a
 nowhere dense arc $A\subseteq Y$.
 Finally, if $X$ is not a tree then (d) implies (a),
  since if branch points are nowhere dense
 then free arcs are dense in $X$;
 hence the only subcontinuum of $X$ containing
 all free arcs of $X$ is $X$ itself.
\end{proof}

\subsection{Totally regular continua}

A continuum $X$
is \emph{totally regular} if for every $x\in X$ and every countable set
$P\subseteq X$ there is a basis of neighborhoods of $x$ with finite boundary not intersecting $P$.
Every dendrite, as well as every \emph{completely regular} continuum
(that is, continuum containing no non-degenerate nowhere dense subcontinuum)
is totally regular.
Totally regular continua are locally connected one-dimensional continua.
By \cite{EH}, every totally regular continuum admits a compatible convex metric $d$ such that
$(X,d)$ has finite one-dimensional Hausdorff measure $\lengthd{d}{X}$.

\subsection{Length-expanding Lipschitz maps}
Here we prove a slight refinement of the main result of \cite{Sp12a}
valid for dendrites. First we recall the definition of length-expanding
Lipschitz maps. If $X$ is a
non-degenerate totally regular continuum
and $\CCc$ is a family of
non-degenerate subcontinua of $X$,
we say that $\CCc$
is \emph{dense} if every non-empty open set in $X$ contains a member of $\CCc$.
By $\CCc_I$ we denote the system of all non-degenerate closed subintervals of $I$;
the Euclidean metric on $I$ is denoted by $d_I$.
A map $f$ is called \emph{Lipschitz-$L$} if $\Lip(f)\le L$.

\begin{definition}[\cite{Sp12a}] Let $X=(X,d)$, $X'=(X',d')$
be non-degenerate (totally regular) continua of finite length and let
$\CCc,\CCc'$ be dense systems of
subcontinua of $X,X'$, respectively.
We say that a continuous map
$f:X\to X'$ is \emph{length-expanding} with respect to $\CCc,\CCc'$
if there exists $\varrho>1$
such that for every $C\in \CCc$ we have $f(C)\in\CCc'$ and
\begin{equation*}\label{EQ:defLengthExpanding}
       \text{if} \quad
       f(C)\ne X'
       \qquad\text{then}\quad
       \lengthd{d'}{f(C)} \ge \varrho\cdot \lengthd{d}{C}.
\end{equation*}
Moreover, if $f$ is surjective and Lipschitz-$L$ we say
that $f:(X,d,\CCc)\to (X',d',\CCc')$ is \emph{$(\varrho,L)$-length-expanding Lipschitz}.
Sometimes we briefly say that $f$ is
\emph{$(\varrho,L)$-LEL} or only \emph{LEL}.
\end{definition}
In this paper we will use the above notions also in the case when $\varrho\in (0,1]$;
to distinguish them, in such a case we will say that $f$ is \emph{length-expanding$^*$} or \emph{$(\varrho,L)$-LEL$^*$}.

Let $X$ be a non-degenerate dendrite.
By e.g.~\cite[Theorem~10.27]{Nad}, we may write $X$ in the form
\begin{equation}\label{EQ:XasUnionXi}
\begin{split}
 &X=\operatorname{closure}\left(
     \bigcup_{n\in N}  X_n
   \right),
 \qquad
 \text{where }
 X_n=A_1\cup\dots\cup A_n,
\\
&A_n=[a_n,b_n] \text{ are arcs with }
 A_n\cap X_{n-1}=\{a_n\} \text{ for } n\ge 2
\end{split}
\end{equation}
(here $N=\NNN$ provided $X\ne A_1$ and $N=\{1\}$ otherwise).
Define a linear ordering on every $A_n$ in such a way that $a_n<b_n$.
We will need the following refinement of \cite[Theorem~C]{Sp12a}.
Till the end of the paper we fix a constant $q\in (0,1/3]$.

\begin{proposition}\label{P:Sp12a-ThmC}
Let $X$ be a dendrite and $a\ne b$ be points of it.
Write $X$ in the form
(\ref{EQ:XasUnionXi}) with $a_1=a,b_1=b$
and fix convex metrics $d_n$ on $A_n$ ($n\in\NNN$).
Then there are a convex metric
 $d=d_{X,a,b}$ on $X$ and Lipschitz surjections
 $\varphi=\varphi_{X,a,b}:I\to X$, $\psi=\psi_{X,a,b}:X\to I$
 with the following properties:
\begin{enumerate}
  \item[(a)] $\lengthd{d}{X}= 1$, $d(a,b)>1-q$;
  \item[(b)] the system
    $$
     \CCc=\CCc_{X,a,b}=\{\varphi(J):\ J\text{ is a non-degenerate
     closed subinterval of } I\}
    $$
    of subcontinua of $X$ is dense;
   \item[(c)] the maps
   $$
    \varphi:(I,d_I,\CCc_I)\to (X,d,\CCc)
    \quad\text{and}\quad
    \psi:(X,d,\CCc)\to (I,d_I,\CCc_I)
   $$
   are $(\varrho,L)$-LEL$^*$, where $0<\varrho<1<L$ are constants depending only on
   $q$,
   and $\varphi(0)=a$, $\varphi(1)=b$, $\psi(a)=0$, $\psi(b)=1$;
  \item[(d)] there is $c>0$ such that $\psi(x)=c \cdot d(a,x)$
   for every $x\in X$;
  \item[(e)] for every $n$ there is $c_n>0$ such that $d(x,y)=c_n \cdot d_n(x,y)$ whenever
   $x,y\in A_n$;
  \item[(f)] $\lengthd{d}{A_{n+1}} \le q\cdot \lengthd{d}{A_n}$ for every $n$;
  \item[(g)] if $r=r_{A_1}:X\to A_1$ is the first point retraction of $X$ onto $A_1$,
   then
   $$
    r\circ \varphi(s) \le  r\circ \varphi(t)
    \qquad\text{for every }s\le t \text{ from } I.
   $$
\end{enumerate}
\end{proposition}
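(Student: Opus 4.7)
The proposition refines \cite[Theorem~C]{Sp12a}, which already produces $d,\varphi,\psi$ satisfying (a)--(c) with constants depending only on $q$; the new content is pinning down the structural conditions (d)--(g). My plan is to revisit that construction, fixing the degrees of freedom so as to achieve the additional items, and then to verify each of (a)--(g) in turn.

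First, I would choose positive arc lengths $(\ell_n)_{n\in N}$ with $\sum_n\ell_n=1$, $\ell_{n+1}\le q\ell_n$ for every $n$, and $\ell_1>1-q$; for instance, a geometric sequence with ratio $q_0\in(0,q)$, giving $\ell_n=(1-q_0)q_0^{n-1}$. Rescale each given convex arc metric $d_n$ by the factor $c_n:=\ell_n/\lengthd{d_n}{A_n}$ and let $d$ be the path metric on $X$ obtained by gluing the rescaled metrics $c_n d_n$ along the attachment points $a_n$. Standard dendrite theory gives that $d$ is a compatible convex metric with $\lengthd{d}{X}=1$ and $d(a,b)=\ell_1$, which delivers (a), (e) and (f) at once.

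Next, I would construct $\varphi$ by the inductive scheme of \cite{Sp12a}: allocate a ``backbone'' subinterval of $I$ on which $\varphi$ traverses $A_1$ monotonically from $a$ to $b$, and a sequence of shorter nested subintervals on which $\varphi$ recursively parametrizes excursions up each $A_n$ starting and ending at $a_n$. The geometric decay $\ell_{n+1}\le q\ell_n$ makes the inductive definition converge uniformly to a Lipschitz-$L$ surjection with expansion factor $\varrho>0$ on the system $\CCc$, with $L$ and $\varrho$ depending only on $q$; this yields (b) and the $\varphi$-part of (c). Defining $\psi(x):=d(a,x)/\ell_1$ makes (d) hold by construction, and the LEL$^*$ bounds for $\psi$ follow from Lipschitz-ness of $d(a,\cdot)$ together with the expansion of $\varphi$-images. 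Property (g) is built into the design: on each excursion the first-point retraction $r\circ\varphi$ is constant and equal to $r(a_n)$, while on the backbone it agrees with the monotone traversal of $A_1$.

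The main technical obstacle I foresee is reconciling (d) with surjectivity of $\psi$: for $(1/\ell_1)\,d(a,\cdot)$ to map $X$ onto $I$ the point $b$ must be $d$-farthest from $a$, whereas a naive decomposition may give $d(a,a_n)+\ell_n>\ell_1$ for some $n\ge 2$. I would resolve this by choosing the enumeration in (\ref{EQ:XasUnionXi}) adaptively: at stage $n$ first pick the attachment point $a_n\in X_{n-1}$, then allocate a length $\ell_n$ small enough that $d(a,a_n)+\ell_n\le\ell_1$. The bound $q\le 1/3$ leaves enough slack in the geometric-decay budget to do this while still arranging $\sum_n\ell_n=1$ and $\ell_1>1-q$.
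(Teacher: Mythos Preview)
Your approach is essentially the paper's: both defer to the construction of \cite[Section~5.2]{Sp12a} (the inductive/inverse-limit description of $X$ as $\varprojlim(X_n,f_n)$), choosing the rescaling constants $c_n$ small enough that the geometric decay in (f) holds. The paper's proof is in fact terser than yours --- it simply asserts that repeating that construction word-by-word with $\tilde d_n=c_n d_n$ and $c_n$ sufficiently small yields (a)--(g).

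One point worth flagging: your resolution of the surjectivity obstacle for $\psi$ --- re-choosing the enumeration in (\ref{EQ:XasUnionXi}) adaptively --- is in tension with the literal statement, which fixes the decomposition and the metrics $d_n$ in advance. If some $a_n=b$ (which (\ref{EQ:XasUnionXi}) permits), then $d(a,x)>d(a,b)$ for every $x\in A_n\setminus\{b\}$ no matter how small $\ell_n$ is, so (d) together with $\psi(b)=1$ and $\psi(X)\subseteq I$ cannot all hold. The paper's proof does not address this either; in the actual applications (Lemma~\ref{L:sp12a-arc} and the proof of Proposition~\ref{P:thmA}) only $A_1=[a,b]$ and $d_1$ are externally constrained, so the intended reading is evidently that the rest of the decomposition may be chosen as part of the construction --- exactly what you propose. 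You should make that reading explicit rather than present it as a workaround.
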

Notice that, if $X=[a,b]$ is an arc, then $\CCc_{X,a,b}$ is the system of all subarcs
of $X$.

\begin{proof}
If $X=[a,b]$ the statement is trivial; so assume that $N=\NNN$.
For every $n$ let $f_n:X_{n+1}\to X_n$ be the first point retraction
(that is, $f_n(x)=x$ for $x\in X_n$ and $f_n(x)=a_{n+1}$ for $x\in A_{n+1}$).
By Anderson-Choquet embedding theorem
(see e.g.~\cite[2.10]{Nad}),
 $X$ is homeomorphic to the inverse limit $\varprojlim (X_n,f_n)$.
Repeating the construction from \cite[Section~5.2]{Sp12a}
(see also the remark after \cite[Corollary~26]{Sp12a}) 
word-by-word, with $\tilde{d}_n=c_n\cdot d_n$ where $c_n$'s are sufficiently small,
we obtain $d_{X,a,b}$, $\varphi_{X,a,b}$ and $\psi_{X,a,b}$ such that
all the properties (a)--(g) are satisfied.
\end{proof}

The following can be proved in the same way as \cite[Theorem~D]{Sp12a}.
For $k\in\NNN$, let $f_k:I\to I$ denote the continuous map
which fixes $0$ and maps every $[(i-1)/k,i/k]$ linearly onto $I$.

\begin{proposition}\label{P:Sp12a-ThmD}
Keeping the notation from Proposition~\ref{P:Sp12a-ThmC},
for every $\rho>1$, every non-degenerate totally regular continua $X,X'$ and
every pairs of distinct points
$a,b\in X$, $a',b'\in X'$
there are a constant $L_\rho$ (depending only on $\rho$) and
a $(\rho,L_\rho)$-LEL map
$$
 f:(X,d_{X,a,b},\CCc_{X,a,b})\to (X',d_{X',a',b'},\CCc_{X',a',b'})
$$
with $f(a)=a'$ and $f(b)=b'$.
Moreover, $f$ is equal to the composition
$\varphi_{X',a',b'}\circ f_k\circ \psi_{X,a,b}$ with some odd $k\ge 3$.
\end{proposition}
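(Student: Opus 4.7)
The plan is to mimic \cite[Theorem~D]{Sp12a}. Invoke Proposition~\ref{P:Sp12a-ThmC} to obtain convex metrics on $X, X'$ together with $(\varrho_0, L_0)$-LEL$^*$ surjections
$\psi = \psi_{X,a,b}: X \to I$ and $\varphi = \varphi_{X',a',b'}: I \to X'$,
where $0 < \varrho_0 < 1 < L_0$ depend only on the globally fixed constant $q$. For an odd integer $k \geq 3$ (to be chosen), set
$$
 f := \varphi \circ f_k \circ \psi.
$$
The endpoint conditions are immediate: $\psi(a) = 0$ and $f_k(0)=0$ (since $f_k$ fixes $0$) give $f(a) = \varphi(0) = a'$; $\psi(b) = 1$ and $f_k(1)=1$ (using that $k$ is odd) give $f(b) = \varphi(1) = b'$. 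The map $f$ is a surjection as a composition of surjections, and $\Lip(f) \leq \Lip(\varphi) \cdot \Lip(f_k) \cdot \Lip(\psi) \leq k L_0^2$.

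The substantive task is to verify the $(\rho, L_\rho)$-LEL property for an appropriate $k$. Given $C \in \CCc_{X,a,b}$, the LEL$^*$ property of $\psi$ yields $J := \psi(C) \in \CCc_I$; then $f_k(J)$ is a non-degenerate closed subinterval of $I$ (by continuity and the fact that $f_k$ has nonzero slope on each monotonicity piece), and $\varphi(f_k(J)) \in \CCc_{X',a',b'}$, so $f(C) \in \CCc_{X',a',b'}$. Suppose now that $f(C) \neq X'$. Then $\varphi(f_k(J)) \neq X'$ forces $f_k(J) \neq I$, which in turn forces $J$ to contain no full monotonicity interval $[(i-1)/k, i/k]$ of $f_k$; hence $|J| < 2/k$, $J$ straddles at most one vertex $i/k$, and (see below) $|f_k(J)| \geq (k/2)|J|$. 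Also, $J = I$ would give $f(C) = X'$, so the LEL$^*$ property of $\psi$ supplies $|J| \geq \varrho_0 \lengthd{d_{X,a,b}}{C}$, and combining with the LEL$^*$ property of $\varphi$ at $f_k(J)$,
$$
 \lengthd{d_{X',a',b'}}{f(C)}
  \;\geq\; \varrho_0\, |f_k(J)|
  \;\geq\; \frac{k\varrho_0^2}{2}\, \lengthd{d_{X,a,b}}{C}.
$$
Choosing any odd $k \geq \max\{3,\, 2\rho/\varrho_0^2\}$ makes the right-hand side at least $\rho\,\lengthd{d_{X,a,b}}{C}$, and setting $L_\rho := k L_0^2$ (which depends only on $\rho$ since $q$ is fixed) finishes the argument.

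The one slightly geometric point to verify is the bound $|f_k(J)| \geq (k/2)|J|$ when $J = [x,y]$ straddles a vertex $i/k$. Here $f_k$ has slope $\pm k$ on each of $[x, i/k]$ and $[i/k, y]$, and $f_k(J)$ is a single interval with one endpoint equal to $f_k(i/k) \in \{0,1\}$ and the other endpoint the point among $f_k(x), f_k(y)$ farther from $f_k(i/k)$; in particular $|f_k(J)| = k\cdot\max(i/k - x,\; y - i/k) \geq k|J|/2$, since $(i/k - x) + (y - i/k) = |J|$. This is essentially the only non-bookkeeping step; the remainder of the proof is assembling Lipschitz and length bounds from Proposition~\ref{P:Sp12a-ThmC}.
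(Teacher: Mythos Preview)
Your argument is correct and is precisely the approach the paper intends: it does not give a proof at all, merely stating that the result ``can be proved in the same way as \cite[Theorem~D]{Sp12a}'', and your write-up supplies exactly those details --- compose $\varphi_{X',a',b'}\circ f_k\circ\psi_{X,a,b}$, use oddness of $k$ for the endpoint condition, and pick $k$ large enough that the factor $k\varrho_0^2/2$ dominates $\rho$. The only remark is that the statement is phrased for totally regular continua while Proposition~\ref{P:Sp12a-ThmC} as written here covers dendrites; this is a feature of the paper's presentation (the general construction lives in \cite{Sp12a}) rather than a gap in your reasoning.
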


Finally, we prove the next lemma which is a consequence of
Proposition~\ref{P:Sp12a-ThmC}.

\begin{lemma}\label{L:sp12a-arc}
Keeping the notation from Proposition~\ref{P:Sp12a-ThmC},
there is a constant $\gamma>0$ with the following property:
For every dendrite $X$ and every two distinct points $a,b$ of it,
there exists a
$$
 (\gamma,1)-LEL^* \text{ retraction }
 r=r_{X,a,b}:(X,d_{X,a,b},\CCc_{X,a,b}) \to (A,d_A,\CCc_A),
$$
where $A=[a,b]$, $d_A$ is the restriction of $d_{X,a,b}$ onto $A\times A$ and
$\CCc_A$ is the system of all subarcs of $A$.
\end{lemma}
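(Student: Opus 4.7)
The plan is to build $r$ from the map $\psi = \psi_{X,a,b}$ of Proposition~\ref{P:Sp12a-ThmC}, rather than taking the first point retraction $r_A$ of $X$ onto $A$. The latter fails to satisfy the LEL$^*$ condition: it collapses every component of $X\setminus A$ to a single boundary point, so for any $C=\varphi(J)\in\CCc$ whose image $\varphi(J)$ lies inside such a component, $r_A(C)$ is degenerate and so is not in $\CCc_A$.

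First I would verify that $\psi|_A\colon A\to I$ is a homeomorphism. By property~(d), $\psi(x)=c\cdot d(a,x)$ for some $c>0$; for $x\in A$ the value $d(a,x)$ is the arc-length along the geodesic arc $A=[a,b]$, strictly monotone from $0$ at $a$ to $d(a,b)$ at $b$. Combined with $\psi(a)=0$ and $\psi(b)=1$ from (c), this makes $\psi|_A$ a strictly increasing continuous bijection onto $I$ (and forces $c=1/d(a,b)$). Let $\sigma\colon I\to A$ be its inverse; then $\sigma(s)$ sits at arc-length $s\cdot d(a,b)$ from $a$ along $A$, so that $d(\sigma(s),\sigma(t))=|s-t|\cdot d(a,b)$ for all $s,t\in I$. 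Define $r:=\sigma\circ\psi\colon X\to A$. It is continuous, and since $\psi|_A$ is injective we have $r|_A=\id_A$, so $r$ is a retraction.

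For the Lipschitz-$1$ estimate I would combine the displayed formula for $\sigma$ with the bound $|\psi(x)-\psi(y)|=c\,|d(a,x)-d(a,y)|\le c\cdot d(x,y)=d(x,y)/d(a,b)$ coming from the triangle inequality; this yields $d(r(x),r(y))\le d(x,y)$.

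Finally, for the length-expansion condition, take $C=\varphi(J)\in\CCc$ with $J$ a non-degenerate closed subinterval of $I$. Since both $\varphi$ and $\psi$ are $(\varrho,L)$-LEL$^*$ by~(c), the set $K:=\psi(\varphi(J))$ belongs to $\CCc_I$, so $r(C)=\sigma(K)$ is a non-degenerate subarc of $A$, i.e.\ $r(C)\in\CCc_A$. Suppose $r(C)\ne A$; then $K\ne I$, and the LEL$^*$ property of $\psi$ applied to $\varphi(J)\in\CCc$ gives $|K|\ge\varrho\cdot\lengthd{d}{C}$. Combining this with $\lengthd{d_A}{\sigma(K)}=d(a,b)\cdot|K|$ and $d(a,b)>1-q$ from~(a) yields $\lengthd{d_A}{r(C)}\ge(1-q)\varrho\cdot\lengthd{d}{C}$. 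Hence $\gamma:=(1-q)\varrho$, depending only on the fixed constant $q$, works. The only subtle point is verifying that $\psi|_A$ is a homeomorphism onto the full interval $I$, but this is routine given properties~(c), (d) and the convexity of $d$.
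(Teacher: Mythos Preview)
Your argument is correct, and it is genuinely different from --- and considerably shorter than --- the proof in the paper.

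The paper constructs an explicit retraction by hand: it first identifies the points $y_k\in A$ with non-degenerate first-point preimages $Y_k=r_A^{-1}(y_k)$, then sends $x\in Y_k$ to the point of $A$ at signed distance $\pm d(x,y_k)$ from $y_k$, the sign depending on whether $y_k$ lies in the left or right half of $A$. Having built $r$ this way, the paper must then verify the length-expanding property directly: given $C=\varphi([s_0,s_1])$ it decomposes $C$ into $C_0$, $C_1$, the full $Y_k$'s with $y_k$ in the interior of $r_A(C)$, and the sub-arc $[u_0,u_1]$ of $A$; it then bounds $\lengthd{d}{C}$ above by a geometric-series estimate using property~(f), and bounds $\lengthd{d}{r(C)}$ below using the LEL$^*$ property of $\psi$ on each piece. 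This yields $\gamma=(1-q)^2\varrho/4$.

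Your route bypasses all of this. By observing from property~(d) that $\psi|_A$ is a similarity onto $I$ with ratio $c=1/d(a,b)$, you set $r=(\psi|_A)^{-1}\circ\psi$ and inherit the length-expanding property of $r$ immediately from that of $\psi$ in~(c): for $C\in\CCc$ with $r(C)\ne A$ one has $\psi(C)\ne I$, hence $|\psi(C)|\ge\varrho\,\lengthd{d}{C}$, and scaling by $d(a,b)>1-q$ finishes. The Lipschitz-$1$ bound is the one-line reverse triangle inequality. You thus obtain the cleaner constant $\gamma=(1-q)\varrho$, and you never need the structural decomposition of $C$ or property~(f). Note, incidentally, that your retraction actually coincides with the paper's on every $Y_k$ with $y_k$ in the left half of $A$ (since $d(a,x)=d(a,y_k)+d(y_k,x)$ there); the paper's ``folding back'' on the right half is unnecessary once one knows, as your argument implicitly uses, that $\psi(X)\subseteq I$ forces $d(a,x)\le d(a,b)$ for all $x\in X$.
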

\begin{proof}
Let $\varrho$ be the constant from (c) of Proposition~\ref{P:Sp12a-ThmC}
and put $\gamma=(1-q)^2\varrho/4$; we prove that this $\gamma$
satisfies the above stated property. To this end,
let $X$ be a dendrite, $a\ne b$ be points of $X$ and $A=[a,b]$.
Write $X$ in the form (\ref{EQ:XasUnionXi}) with $A_1=A$.
If $X=A$, the assertion is trivial (take $r=\id_X$); so assume that $X\ne A$.
Proposition~\ref{P:Sp12a-ThmC} gives us a convex metric $d=d_{X,a,b}$
and maps $\varphi_{X,a,b}$, $\psi_{X,a,b}$
satisfying the conditions (a)--(g); put $\CCc=\CCc_{X,a,b}$
and let $r_A:X\to A$ be the first point retraction.
Let $y_k$ ($k\in K$) be all points $y$ of $A$ with non-degenerate preimage
$r_A^{-1}(y)$. Put $Y_k=r_A^{-1}(y_k)$ ($k\in K$); then
$Y_k$'s are pairwise disjoint non-degenerate subdendrites of $X$
and $X=A\cup\bigcup_k Y_k$
Put $t_0=d(a,b)$, $t_k=d(a,y_k)$ ($k\ge 1$)
and for any $t\in [0,t_0]$ denote by $a_t$ the unique point
of $A$ satisfying $d(a_t,a)=t$; hence e.g.~$a_{t_k}=y_k$
and $\lengthd{d}{[a_s,a_t]}=t-s$ for any $s<t$.
Now define $r=r_{X,a,b}:X\to A$ by
$$
 r(x)=\begin{cases}
  x  & \text{if }x\in A;
 \\
  a_{t_k+d(x,y_k)}   & \text{if } x\in Y_k \text{ with } t_k\le t_0/2;
 \\
  a_{t_k-d(x,y_k)}   & \text{if } x\in Y_k \text{ with } t_k> t_0/2.
 \end{cases}
$$
By Proposition~\ref{P:Sp12a-ThmC}(a) and the choice of $q$,
$t_0>2/3$ and so $d(x,y_k)<1/3<t_0/2$ for every $x\in Y_k$.
Hence $r(x)$ is well defined for every $x$. Trivially, $r$ is a continuous
retraction of $X$ onto $A$. Using convexity of $d$ one can easily show
that $\Lip(r)=1$. 

It remains to prove that $r$ is $\gamma$-length expanding$^*$. To this end, fix
$C\in \CCc$, $C=\varphi_{X,a,b}([s_0,s_1])$ and put
$u_i=r_A\circ \varphi_{X,a,b}(s_i)$ ($i=0,1$).
By Proposition~\ref{P:Sp12a-ThmC}(g) we may write
$$
 C=\left[
   C_0
   \sqcup \left(\bigsqcup_{k:\ y_k\in (u_0,u_1)} Y_k \right)
   \sqcup C_1
 \right]
 \cup [u_0,u_1]
$$
with $C_i$ ($i=0,1$)
being connected (either degenerate or belonging to
$\CCc$).
We will assume that there is $k$ with $y_k\in (u_0,u_1)$; the other (simpler)
case can
be described analogously.
Put $n_0=\min\{n:\ A_n\subseteq Y_k,\ u_0<y_k<u_1\} \ge 2$.
Then, using (f) from Proposition~\ref{P:Sp12a-ThmC},
\begin{eqnarray*}
 \lengthd{d}{C}
&=&
 \lengthd{d}{C_0} + \sum_{k:\ y_k\in (u_0,u_1)} \lengthd{d}{Y_k} +
 \lengthd{d}{C_1} + \lengthd{d}{[u_0,u_1]}
\\
&\le&
 \frac{4}{1-q}
 \cdot\max\{
  \lengthd{d}{C_0},
  \lengthd{d}{A_{n_0}},
  \lengthd{d}{C_1},
  \lengthd{d}{[u_0,u_1]}
 \}.
\end{eqnarray*}
On the other hand, by Proposition~\ref{P:Sp12a-ThmC}(d)
we have $\psi(x)=c\cdot d(a,x)$ for every $x\in X$,
where $c=1/d(a,b) \in (1,1/(1-q))$.
For given $k$ and a continuum $Y\subseteq Y_k$ from $\CCc$
containing $y_k$ we have $\psi(Y)=[\psi(y_k),\psi(z_k)]$,
where $z_k$ is such that $d(z_k,y_k)=\max_{y\in Y_k} d(z,y_k)$.
Thus, by convexity of $d$,
$\abs{\psi(Y)}=c\cdot(d(a,z_k)-d(a,y_k))=c \cdot d(y_k,z_k)=c\cdot
\lengthd{d}{r(Y)}$.
By Proposition~\ref{P:Sp12a-ThmC}(c), for every such continuum $Y$ we have
$\lengthd{d}{r(Y)}\ge (\varrho/c)\cdot\lengthd{d}{Y}
\ge \varrho (1-q) \cdot\lengthd{d}{Y}$. Hence, using the fact that
$r|_A$ is the identity,
\begin{eqnarray*}
 \lengthd{d}{r(C)}
 &\ge& \max\{
  \lengthd{d}{r(C_0)},
  \lengthd{d}{r(A_{n_0})},
  \lengthd{d}{r(C_1)},
  \lengthd{d}{r([u_0,u_1])}
 \}
\\
 &\ge&
 \varrho(1-q)  \cdot \max\{
  \lengthd{d}{C_0},
  \lengthd{d}{A_{n_0}},
  \lengthd{d}{C_1},
  \lengthd{d}{[u_0,u_1]}
 \}.
\end{eqnarray*}
Thus, by the choice of $\gamma$,
$\lengthd{d}{r(C)}\ge \gamma\cdot \lengthd{d}{C}$ for every $C\in\CCc$. The
lemma is proved.
\end{proof}

\subsection{$P$-Lipschitz maps}
In \cite{Sp12b} we introduced the so-called $P$-Lipschitz maps (where
$P$ is a finite invariant set) and we gave an upper bound
for their entropy \cite[Proposition~3.3]{Sp12b}.
We recall the definition
as well as the corresponding result here.
If $X$ is a continuum,
a \emph{splitting} of $X$ is any system
$\AAa=\{X_1,\dots,X_n\}$ of non-degenerate subcontinua
covering $X$
such that $P_\AAa=\bigcup_{i\ne j} X_i\cap X_j$ is finite.

\begin{definition}[\cite{Sp12b}]\label{D:P-Lipschitz}
Let $X$ be a non-degenerate continuum and $f:X\to X$ be a continuous map. Let $P$ be a finite $f$-invariant
 subset of $X$, $\AAa$ be a splitting of $X$ with $P_\AAa\subseteq P$
 and $(L_A)_{A\in\AAa}$ be positive constants.
 Then we say that $f$ is
 \emph{$P$-Lipschitz (w.r.t.~the splitting $\AAa$ and the constants $(L_A)_{A\in\AAa}$)}
 if, for every $A\in\AAa$, $f(A)$ is non-degenerate and $\Lip(f|_A)\le L_A$.
\end{definition}

Let $f$ be a $P$-Lipschitz map w.r.t.~$\AAa$.
For $A,B\in\AAa$ we write $A\to B$ or, more
precisely, $A\toPar{f} B$ provided $f(A)$ intersects the interior of
$B$.
The \emph{$P$-transition graph $G_f$ of $f$} is the directed
graph the vertices of which
are the sets $A\in\AAa$ and the edges of which correspond to $A\to B$.
The corresponding $01$-transition matrix will be called the
\emph{$P$-transition matrix} and
will be denoted by $M_f$.
For an integer $n\ge 1$ put
$$
 \AAa^n = \{\AAA=(A_0,A_1,\dots, A_{n-1}):\
 A_i\in \AAa,\ A_0\to A_1\to \dots\to A_{n-1}
 \};
$$
that is, $\AAa^n$ is the set of all paths of length $n-1$ in $G_f$.
Finally, for
non-empty $\BBb\subseteq \AAa$ put
\begin{equation}\label{EQ:thetaB}
  \theta_\BBb
  =\limsup\limits_{n\to\infty} \frac{k_n^{\BBb}}{n}
  \qquad
  \text{where}
  \quad
  k^{\BBb}_n
  =\max\limits_{(A_0,\dots,A_{n-1})\in \AAa^{n}} \card{ \{j: A_j\not\in\BBb}\} \,,
\end{equation}
which is a quantity measuring the maximal ``asymptotic frequency'' of occurrences of
$A\in \AAa\setminus\BBb$ in paths of $f$. The following proposition gives an upper bound for
the entropy of $P$-Lipschitz maps. There and below, $\log^+ x$ denotes $\max\{\log x,0\}$.

\begin{proposition}[\cite{Sp12b}]\label{P:entropyOfPLipschitz}
 Let $X$ be a non-degenerate totally regular continuum endowed with a convex metric $d$
 such that $\lengthd{d}{X}<\infty$.
 Let $f:X\to X$ be $P$-Lipschitz w.r.t.~$\AAa$ and $(L_A)_{A\in\AAa}$.
 Then, for every non-empty subsystem $\BBb$ of $\AAa$,
 $$
  h(f) \le \log^+ L_\BBb + 2\theta_\BBb \log^+ L_\AAa,
 $$
 where $L_{\BBb} = \max_{A\in\BBb} L_A$ and $L_{\AAa} = \max_{A\in\AAa} L_A$.
\end{proposition}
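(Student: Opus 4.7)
The idea is to use Bowen's formula for $h(f)$ via $(n,\eps)$-separated sets and to organize orbits by their itineraries in $\AAa^n$. For an admissible path $\AAA=(A_0,\ldots,A_{n-1})\in\AAa^n$ set $X(\AAA):=\{x\in X: f^i(x)\in A_i \text{ for } 0\le i<n\}\subseteq A_0$. Every orbit of length $n$ has an itinerary in $\AAa^n$, so any $(n,\eps)$-separated set $E\subseteq X$ splits as the disjoint union $\bigsqcup_{\AAA}(E\cap X(\AAA))$, and it suffices to bound each cylinder's contribution and sum over $\AAA$.

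On the cylinder $X(\AAA)$ the iterate $f^j$ is Lipschitz with constant $L(\AAA,j):=\prod_{i<j}L_{A_i}$. After the harmless reduction $L_A\mapsto\max\{L_A,1\}$ (which only weakens the hypothesis and leaves the stated bound unchanged), setting $L(\AAA):=L(\AAA,n)$, two $(n,\eps)$-separated points $x,y\in X(\AAA)$ must satisfy $d(x,y)\ge\eps/L(\AAA)$. Since $(X,d)$ is totally regular with $\lengthd{d}{X}<\infty$, a one-dimensional packing argument in the convex metric $d$ gives
\[
 |E\cap X(\AAA)|\le C\lengthd{d}{X}\,L(\AAA)/\eps
\]
for an absolute constant $C$.

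Next I would sum over itineraries. By the definition of $\theta_\BBb$, for every $\delta>0$ and all sufficiently large $n$ every $\AAA\in\AAa^n$ satisfies $k(\AAA):=\#\{j:A_j\notin\BBb\}\le(\theta_\BBb+\delta)n$. Assuming without loss of generality $1\le L_\BBb\le L_\AAa$, this yields $L(\AAA)\le L_\BBb^{n-k(\AAA)}L_\AAa^{k(\AAA)}\le L_\BBb^n(L_\AAa/L_\BBb)^{(\theta_\BBb+\delta)n}$. To bound the number of admissible paths, I would use the one-dimensional geometry of $X$: since $f(A)$ has length at most $L_A\lengthd{d}{A}$, it meets only boundedly many pieces of $\AAa$, with the branching factor at each step controlled by a constant multiple of $L_A$. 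Propagating this along the itinerary contributes a further factor of $\prod_i L_{A_i}$ to the count, and in particular an extra $L_\AAa^{k(\AAA)}$ from the non-$\BBb$ steps (the analogous factor $L_\BBb^{n-k(\AAA)}$ from the $\BBb$ steps is absorbed in the leading $L_\BBb^n$ term using $L_\BBb\le L_\AAa$ and the definition of $\theta_\BBb$). Taken together, the maximal cardinality $s(n,\eps)$ of an $(n,\eps)$-separated set satisfies
\[
 s(n,\eps)\le (C'/\eps)\cdot L_\BBb^n\cdot L_\AAa^{2(\theta_\BBb+\delta)n},
\]
and taking $\tfrac{1}{n}\log$, letting $n\to\infty$ and then $\delta\to 0$, yields $h(f)\le\log^+ L_\BBb+2\theta_\BBb\log^+ L_\AAa$.

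\textbf{Main obstacle.} The technical heart is the itinerary-counting step, where the factor $2$ in $2\theta_\BBb$ arises: each visit to a piece $A_j\notin\BBb$ contributes $L_\AAa$ \emph{twice}, once through the Lipschitz distortion in the Bowen metric (bounding separation inside a cylinder) and once through the branching of the transition graph at that vertex (bounding the number of cylinders). Making precise how many pieces $f(A)$ can meet in terms of $L_A$ and the geometry of the splitting, and reconciling this count with the $\limsup$ definition of $\theta_\BBb$, is the delicate point. The one-dimensional packing estimate in a dendrite, the reduction $L_A\ge 1$, and the standard passage between $(n,\eps)$-separated and $(n,\eps)$-spanning sets are routine.
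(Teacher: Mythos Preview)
This proposition is not proved in the present paper; it is quoted from \cite{Sp12b} (Proposition~3.3 there), so there is no in-paper proof to compare your attempt against.

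On the merits of your sketch: the skeleton is right (Bowen's formula, itinerary decomposition, the distortion estimate $d(x,y)\ge\eps/L(\AAA)$ for $(n,\eps)$-separated points in a cylinder), and the first contribution $L_\BBb^{\,n}(L_\AAa/L_\BBb)^{(\theta_\BBb+\delta)n}$ coming from the product of Lipschitz constants along a path is correctly derived.

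The genuine gap is exactly where you place it, in the path-counting. Your assertion that the out-degree at a piece $A$ is ``a constant multiple of $L_A$'' is not justified: the number of pieces $B$ meeting $f(A)$ depends on the lengths of the $B$'s, not only on $L_A$ and $\lengthd{d}{A}$. More importantly, even granting an estimate of the form $\card\{B:A\to B\}\le c\,L_A$ with some fixed $c=c(\AAa)>1$, multiplying along a path of length $n$ produces an extra factor $c^{\,n}$, hence an additive $\log c$ in the entropy bound. The stated inequality does not allow this: its right-hand side depends only on $L_\BBb$, $L_\AAa$ and $\theta_\BBb$, with no room for a constant depending on the splitting. Your ``absorption'' remark does not remove this per-step constant. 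A correct argument has to avoid any multiplicative constant $>1$ per step in the combinatorial count; this is typically achieved by \emph{not} separating the number of itineraries from the number of points per cylinder, but instead tracking a length-weighted quantity (lengths of cylinder pieces and of their images) through the iteration, so that the Lipschitz constants enter exactly twice---once via the distortion inside a cylinder and once via the growth of the number of cylinder components---without any spurious constants. Your intuition for the source of the factor $2$ is sound, but the mechanism realising it is missing from the sketch.
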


\section{Markov maps on trees}\label{S:markov}
Let $T$ be a tree and $f:T\to T$ be a continuous
map. Assume that a finite set $P\subset T$ is $f$-invariant
and contains all vertices of $T$. We say that $f$ is \emph{$P$-Markov}
if it is monotone on each component of $T\setminus P$.
The closures of
the components of $T\setminus P$ are called \emph{$P$-basic arcs}
of $f$.
The \emph{$P$-transition graph} of $f$ with respect to $P$
is the directed graph having $P$-basic arcs as vertices,
with an edge from $A$ to $B$ (we write $A\to B$)
if and only if $B\subseteq f(A)$. The corresponding $01$-matrix
will be denoted by $M_f$ and called the \emph{$P$-transition matrix}.
For the following results see e.g.~\cite[Proposition~1.4, Corollary~1.11]{Bal01}.

\begin{lemma}\label{L:entropyMarkov} Let $T$ be a tree and let
$f:T\to T$ be a $P$-Markov map with the transition matrix $M$
and the Perron-Frobenius value $\lambda_M$. Then
$h(f) = \log^+ \lambda_M$.
\end{lemma}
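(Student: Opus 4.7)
The plan is to prove the two inequalities $h(f)\le \log^+\lambda_M$ and $h(f)\ge \log^+\lambda_M$ separately, following the standard template for Markov maps on one-dimensional spaces.

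For the upper bound I would use a lap-counting argument adapted from the interval case. Because $f$ is monotone on every $P$-basic arc, an easy induction shows that $f^n$ is monotone on each of the subarcs obtained by pulling back the $P$-partition under $f,f^2,\dots,f^{n-1}$, and each such monotone piece corresponds to an admissible length-$n$ path $A_0\to A_1\to\dots\to A_{n-1}$ in the $P$-transition graph $G_f$. Consequently, the number of monotone pieces of $f^n$ is bounded above by $\sum_{i,j}(M^n)_{ij}$, which grows like $\lambda_M^n$ if $\lambda_M>1$ and is bounded if $\lambda_M\le 1$. Since the topological entropy of a piecewise monotone continuous self-map of a tree equals the exponential growth rate of its lap number (Misiurewicz--Szlenk theorem, extended to trees by Alsed\`a--Llibre--Misiurewicz), this yields $h(f)\le \log^+\lambda_M$.

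For the lower bound, the case $\lambda_M\le 1$ is trivial, so assume $\lambda_M>1$. By Perron--Frobenius applied to $M$ in Frobenius normal form, there is an irreducible submatrix $M'$ of $M$ with spectral radius $\lambda_M$, corresponding to a strongly connected subgraph $G'$ of $G_f$. For any $n$, each length-$n$ loop in $G'$ based at a vertex $A$ yields, by the monotonicity of $f$ on each basic arc and the condition that $f(A_i)\supseteq A_{i+1}$ whenever $A_i\to A_{i+1}$, a subarc of $A$ on which $f^n$ acts monotonically and whose image covers $A$; hence $f^n$ has a fixed point in $A$ for that loop. More generally, the standard Markov coding produces a semiconjugacy from the one-sided subshift of finite type $\Sigma_{M'}^+$ onto an $f$-invariant subset of $T$, and this semiconjugacy is at most boundedly-to-one (finite-to-one off the countable set of preimages of $P$). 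Since topological entropy is not decreased under a finite-to-one factor map, we get $h(f)\ge h(\sigma|_{\Sigma_{M'}^+})=\log\lambda_{M'}=\log\lambda_M$.

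The main obstacle is just the careful verification that the coding semiconjugacy has well-controlled fibres on a tree rather than an interval; the issue is that a branch point can be hit by different itineraries, but these form a countable set and hence do not affect the entropy computation. Given that the result is classical and is already credited in the text to \cite{Bal01}, I would in practice compress the argument to a reference, noting only that both bounds follow in the same way as for interval Markov maps since the local picture on each basic arc is identical to that of an interval.
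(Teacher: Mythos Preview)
Your proposal is correct and, in fact, more detailed than what the paper does: the paper simply cites \cite[Proposition~1.4, Corollary~1.11]{Bal01} without giving any argument. Your sketch of the lap-counting upper bound and the subshift semiconjugacy lower bound is the standard proof underlying that reference, and your final remark that one should just compress this to a citation is exactly what the paper chose to do.
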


\begin{lemma}\label{L:transitiveMarkov} Let $T$ be a tree and let
$f:T\to T$ be a $P$-Markov map with the transition matrix $M$. Then
\begin{enumerate}
	\item[(a)] $f$ is transitive if and only if $M$ is irreducible
	  and not a permutation matrix;
	\item[(b)] $f$ is exact if and only if $M$ is primitive.
\end{enumerate}
\end{lemma}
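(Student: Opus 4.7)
The plan is to translate transitivity and exactness of $f$ into combinatorial properties of the transition matrix via the Markov structure. The key structural observation is that for a $P$-Markov map each $P$-basic arc $A$ has image $f(A)$ equal to the union of those basic arcs $B$ with $A\to B$ in $G_f$, and inductively $f^n(A)$ is the union of terminal arcs of paths of length $n$ in $G_f$ starting at $A$. Moreover, since $f|_A$ is monotone, any open set $U$ can be decomposed into finitely many subarcs of basic arcs, and each iterate $f^n(U)$ can be tracked as a union of connected monotone images.

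For (a), assume first that $f$ is transitive. Given basic arcs $A,B$ with non-empty interiors, transitivity supplies some $n$ with $f^n(\interior{A})\cap \interior{B}\ne\emptyset$; combined with the Markov property this forces $B\subseteq f^n(A)$, so $G_f$ contains a path from $A$ to $B$ and $M$ is irreducible. If $M$ were a permutation matrix, each $f|_A$ would be a homeomorphism onto a single basic arc $\sigma(A)$, and some power $f^k$ would restrict to a self-homeomorphism of each basic arc; since no self-homeomorphism of a closed arc has a dense orbit, transitivity on small subarcs of a fixed basic arc would fail, a contradiction. For the converse, irreducibility together with the non-permutation assumption gives $\lambda_M>1$ (as noted in Subsection~\ref{SS:PerronFrobenius}); using the strictly positive Perron eigenvector $v$ to assign length $v_A$ to each basic arc and parametrising $f|_A$ affinely, $f$ multiplies length by $\lambda_M$ on each basic arc. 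Hence any subarc of a basic arc eventually covers a whole basic arc under iteration, and by irreducibility further iterates spread across all of $T$, producing transitivity.

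For (b), exactness implies transitivity and therefore irreducibility by (a). If $M$ had period $d>1$, the basic arcs would partition into cyclic classes $\AAa_0,\dots,\AAa_{d-1}$ with $f$ sending each arc of $\AAa_i$ into the union of arcs in $\AAa_{i+1\bmod d}$; then no $f^n(A)$ could equal $T$, contradicting exactness. Conversely, primitivity supplies $n_0$ with $M^{n_0}$ strictly positive, which means $f^{n_0}(A)=T$ for every basic arc $A$; combining this with the expansion argument from (a) shows that for any non-empty open $U$ some $f^n(U)$ contains a basic arc, and a further $n_0$ iterates push $U$ onto $T$, giving exactness. The main technical subtlety throughout is controlling how $f^n(J)$ may cease to be a single arc once $J$ is iterated across points of $P$, which I would handle by tracking the finitely many monotone connected pieces and invoking the Markov property piece by piece rather than trying to keep $f^n(J)$ connected.
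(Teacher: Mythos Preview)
Your argument is essentially correct and follows the standard route for this result. Note, however, that the paper does not give its own proof of Lemma~\ref{L:transitiveMarkov}; it simply refers to \cite[Proposition~1.4, Corollary~1.11]{Bal01}. So there is no ``paper's proof'' to compare with beyond that citation, and what you have written is a self-contained justification rather than a paraphrase of something in the text.

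Two small remarks on execution. First, when you invoke the Perron eigenvector to obtain constant slope $\lambda_M$, be explicit that you are either conjugating $f$ by a homeomorphism that straightens each $f|_A$ to an affine map, or replacing $f$ by a $P$-linear model with the same transition matrix; transitivity and exactness are topological, so this is harmless, but as written ``parametrising $f|_A$ affinely'' blurs whether you are changing $f$ or only the metric. Second, in the expansion step it is worth noting that once a subarc $J$ acquires an endpoint in $P$ (which happens after the first time $f(J)$ straddles a point of $P$), its forward images keep an endpoint in $P$; hence if $f(J)$ does not contain a full basic arc it lies in a single basic arc and has length $\lambda_M\cdot|J|$. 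This is what makes the growth argument work for all $\lambda_M>1$, not just $\lambda_M>2$, and it removes the need to keep splitting into halves.
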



A special class of Markov maps consists of the so-called
$P$-linear Markov maps. 
Let $T$ be a tree endowed with a convex metric $d$. Let
$f:T\to T$ be a $P$-Markov map.
We say that $f$ is \emph{$P$-linear} if for every component $C$ of
$T\setminus P$ there is a positive constant $\lambda_C$ such that
$$
 d(f(x), f(y)) = \lambda_C \cdot d(x,y)
 \qquad\text{for every } x,y\in C.
$$
Moreover, if there is $\lambda$ such that every $\lambda_C$ is equal to $\lambda$,
we say that
$f$ is \emph{$P$-linear with constant slope $\lambda$}.

By \cite{BC01}, every
$P$-Markov tree map $f:T\to T$ is semiconjugate to a $P'$-linear tree map
$f':T'\to T'$ with
constant slope $\lambda'=e^{h(f)}$. The following trivial fact shows that
if $f$ is $P$-linear and transitive
then the ``constant slope'' can be achieved by a simple change of the metric on $T$.

\begin{lemma}\label{L:parryMapOnTree}
 Let $T$ be a tree with a convex metric $d$
 and let $f:T\to T$ be a transitive $P$-linear Markov map on $T$ with the entropy
 $h(f)=\log\lambda$.
 Then there is an equivalent convex metric $d'$ on $T$ such that
 $f:(T,d')\to (T,d')$ is a $P$-linear Markov map with constant slope $\lambda$.
\end{lemma}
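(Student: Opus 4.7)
The plan is to use Perron--Frobenius to fix $d'$-lengths of the basic arcs and then reparameterize each basic arc in a Parry-style way so that $f$ stretches pointwise (not merely on average) by the factor $\lambda$ on each basic arc. Note that a naive linear rescaling of each basic arc cannot work: if $f|_A$ sends the sub-arc $A\cap f^{-1}(B)$ onto $B$ with the wrong ratio of lengths, then in any metric obtained by scaling $A$ and $B$ by constants the restriction $f|_A$ becomes piecewise linear in $d'$ with distinct slopes, not a single constant one.

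By transitivity and Lemma~\ref{L:transitiveMarkov}(a), the transition matrix $M$ is irreducible and is not a permutation, so by Subsection~\ref{SS:PerronFrobenius} its Perron eigenvalue exceeds $1$; Lemma~\ref{L:entropyMarkov} identifies this eigenvalue with $\lambda$. Perron--Frobenius furnishes a strictly positive right eigenvector $v=(v_A)$ indexed by the basic arcs and satisfying $\sum_{B\,:\,A\to B} v_B = \lambda\,v_A$ for every basic arc $A$.

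I build $d'$ one basic arc at a time. For $A$ fixed, I look for a homeomorphism $\phi_A\colon A\to[0,v_A]$ under which, for every $B$ with $A\to B$, the sub-arc $A\cap f^{-1}(B)$ (an arc, since $f|_A$ is monotone) is sent to a sub-interval of length $v_B/\lambda$; the eigenvector relation makes these lengths add up to exactly $v_A$. Iterating the rule inside each piece prescribes, for every admissible level-$n$ cylinder $A\cap f^{-1}(B_1)\cap\cdots\cap f^{-(n-1)}(B_{n-1})$, the $d'$-length $v_{B_{n-1}}\lambda^{-(n-1)}$. Because $\lambda>1$ these prescribed lengths shrink uniformly in $n$, and because $M$ is irreducible every admissible cylinder is a non-degenerate sub-arc of $A$; so the nested partitions uniquely determine $\phi_A$ as a continuous monotone bijection. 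Gluing the intervals $[0,v_A]$ along $P$ then produces a convex metric $d'$ on $T$ (distance equals the sum of $d'$-lengths along the unique arc), topologically equivalent to $d$ since each $\phi_A$ is a homeomorphism and there are only finitely many basic arcs.

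The constant-slope property follows straight from the construction: for any $x,y\in A$, partitioning the arc $[x,y]\subseteq A$ according to which set $A\cap f^{-1}(B)$ each of its points belongs to, every piece has $d'$-length equal to $\lambda^{-1}$ times the $d'$-length of its $f$-image, and summing gives $d'(f(x),f(y))=\lambda\,d'(x,y)$. I expect the main technical point to be the well-definedness of $\phi_A$: one needs $\lambda>1$ to force cylinder diameters to shrink in $d'$, and one needs irreducibility of $M$ so that every admissible cylinder is a genuine non-degenerate sub-arc. The additional check that $d$-diameters of cylinders also tend to $0$ (so that $\phi_A$ is continuous with respect to the original topology on $A$) uses the fact that iterated preimages of $P$ are dense in every basic arc, which again follows from transitivity of $f$.
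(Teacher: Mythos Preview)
Your proposal is correct, and in fact it is more careful than the paper's own argument. The paper does precisely the ``naive linear rescaling'' you warn against: it takes the Perron eigenvector $(\alpha_i)_i$ and sets
\[
 d'(x,y)=\frac{\alpha_i}{d(a_i,b_i)}\,d(x,y)\qquad\text{for }x,y\in A_i,
\]
then checks only the integrated relation $\sum_{j:\,A_i\to A_j}\alpha_j=\lambda\alpha_i$ and declares that $f$ has constant slope $\lambda$ in $d'$. As you observe, this does not follow: under that rescaling the $d'$-slope of $f$ on the sub-arc $A_i\cap f^{-1}(A_j)$ equals $(\alpha_j/\ell_j)\cdot(\ell_i/\alpha_i)\cdot\lambda_{A_i}$ (with $\ell_k=d(a_k,b_k)$), which depends on $j$ unless all ratios $\alpha_j/\ell_j$ coincide. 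The skew tent map on $[0,1]$ with $P=\{0,1/3,1\}$, $f(0)=0$, $f(1/3)=1$, $f(1)=0$ already shows the failure: the eigenvector is $(1,1)$, but after rescaling the slope on $[0,1/9]$ is $3$ while on $[1/9,1/3]$ it is $3/2$.

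Your Parry-type construction --- assigning to each level-$n$ cylinder the length $v_{B_{n-1}}\lambda^{-(n-1)}$ and letting these nested partitions determine a homeomorphism $\phi_A\colon A\to[0,v_A]$ --- repairs this and yields a genuine pointwise constant slope. The points you single out (positivity of the eigenvector to make each cylinder non-degenerate in $d'$, $\lambda>1$ so $d'$-diameters shrink, and density of $\bigcup_n f^{-n}(P)$ from transitivity so that $d$-diameters also shrink and $\phi_A$ is a homeomorphism) are exactly the ones needed. One small remark: the non-degeneracy of admissible cylinders in $d$ comes from $P$-linearity (each $\lambda_C>0$) together with the definition $A\to B\Leftrightarrow B\subseteq f(A)$, not from irreducibility; irreducibility enters only through the strict positivity of $v$.

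An equivalent route is the one the paper cites just before the lemma: by \cite{BC01} any $P$-Markov tree map is semiconjugate to a constant-slope model, and for a transitive $P$-linear map this semiconjugacy is a conjugacy, so one may simply pull back the metric from the model. Either way the lemma is true; your proof supplies what the paper's sketch omits.
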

\begin{proof} 
 Denote the $P$-basic arcs by $A_i=[a_i,b_i]$, $i=1,\dots,m$. Since
 $T$ is uniquely arcwise connected, every $A_i$ is geodesic and so
 its length is $\lengthd{d}{A_i}=d(a_i,b_i)$.
 Since $f$ is transitive, its $P$-transition matrix $M$
 is irreducible. Thus $\lambda$ is the Perron eigenvalue of
 $M$ with a strictly positive right eigenvector $\alpha=(\alpha_1,\dots,\alpha_m)$;
 that is, $M\cdot\alpha = \lambda \alpha$.
 Let $d'$ be the unique convex metric on $T$
 such that
 $$
  d'(x,y)=\frac{\alpha_i}{d(a_i,b_i)}  \cdot d(x,y)
  \qquad
  \text{for every } i \text{ and } x,y\in A_i.
 $$
 Trivially $f$ is $P'$-linear on $(T,d')$ and
 $\lengthd{d'}{A_i}=\alpha_i$ for every $i$.
 By the choice of $\alpha$ we have that
 $$
  \lambda\cdot \lengthd{d'}{A_i} = \sum\limits_{j: A_i\to A_j}
  \lengthd{d'}{A_j}=\lengthd{d'}{f(A_i)}
 $$
 for every $i$.
 Hence $f$ has constant slope $\lambda$ on $(T,d')$.
\end{proof}

\subsection{$(P,S)$-linear Markov maps}\label{SS:specialMarkovMap}
Here we introduce the so-called $(P,S)$-linear Markov maps, where
$S=(s_0,s_1,\dots,s_n)$ is a tuple of points from $P$; see Definition~\ref{D:specialMarkovMap}.
Maps with this property can be used to define exactly Devaney chaotic tree maps
with ``controlled'' entropy, see Proposition~\ref{P:exactExtensionToTree}.

\begin{definition}\label{D:specialMarkovMap}
Let $T$ be a tree with a convex metric, $P\subseteq T$ be finite
and $f:T\to T$ be a $P$-linear Markov map.
Let $n\ge 1$ and $S=(s_0,s_1,\dots,s_n)$ be a tuple of distinct points from $P$.
We say that $f$ is \emph{$(P,S)$-linear} if
\begin{enumerate}
	\item[(a)] $f(s_i)=s_{i+1}$ for every $0\le i <n$;
	\item[(b)] $A_S=[s_0,s_n]$ is a $P$-basic arc and $s_n$ is an end point of $T$;
	\item[(c)] for every $P$-basic arc $A$ different from $A_S$
	  there is a path from $A$ to $A_S$.
\end{enumerate}
\end{definition}

Tree maps with this property were constructed e.g.~in
the proof of \cite[Theorem~1.2]{AKLS} and also in the proofs
of \cite[Lemmas~4.6,4.7]{Ye}. Let us formulate the latter fact as a lemma.
(For Ye's classification of trees, see Subsection~\ref{SS:trees}.)

\begin{lemma}\label{L:YeIsPSLinear}
 Let $T$ be a tree and let $i$ be such that $T\in\TTt_i$. Then, for every $\eps>0$,
 there are a finite set $P$, a tuple $S=(s_0,\dots,s_m)$ with
 $\{s_1,\dots,s_m\}\subseteq \Endpoints(T)$
 and a transitive $(P,S)$-linear map $f:T\to T$ with
 $$
  h(f) < \frac{\log 2}{m} + \eps,
  \qquad
  \text{where } m=\card\Endpoints(T)-i.
 $$
\end{lemma}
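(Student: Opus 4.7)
The plan is to extract the $(P,S)$-linear structure directly from the tree maps Ye constructs in the proofs of Lemmas~4.6 and 4.7 of \cite{Ye}, proceeding by induction on the parameters $(i,r)$ defining the class $\TTt_i^{n_1,\ldots,n_r}$ containing $T$.

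For the base case, when $T$ is an $n_1$-star ($r=1$, $i=0$), I would reproduce the construction of \cite{AKLS}. Fix the end points $e_1,\ldots,e_{n_1}$ and the center $b$, and define a $P$-linear Markov map that cyclically sends $e_j\mapsto e_{j+1}$, is affine on each edge $[b,e_j]$ for $j<n_1$, and on $[b,e_{n_1}]$ is broken into sufficiently many affine pieces so that the transition matrix is primitive with Perron eigenvalue arbitrarily close to $2^{1/n_1}$. Set $s_j=e_j$ for $1\le j\le n_1$ and take $s_0$ to be the unique $P$-point on $[b,e_{n_1}]$ adjacent to $e_{n_1}$, so that $A_S=[s_0,s_{n_1}]$ is automatically a $P$-basic arc.

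For the inductive step inside $\TTt_0$, given the result for $\TTt_0^{n_1,\ldots,n_{r-1}}$, write $T\in\TTt_0^{n_1,\ldots,n_r}$ as an extension of subtrees $T_1,\ldots,T_{n_r}$ over an $n_r$-star and apply the hypothesis to each $T_j$ to obtain $(P_j,S^{(j)})$-linear maps $f_j$. Following Ye, glue these into a map on $T$ sending $T_j$ affinely onto $T_{j+1}$ for $j<n_r$ and cycling through each $f_j$ to produce transitivity with Perron value close to $2^{1/M}$, where $M=\prod_\ell n_\ell$. The cyclic structures of the $f_j$'s concatenate into a single tuple $S$ of length $M+1$ cycling through the $M$ end points of $T$. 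To ascend to $\TTt_i$ for $i\ge 1$, attach free arcs one at a time as in Ye's Lemma~4.7: extend the map from $T'\in\TTt_{i-1}^{n_1,\ldots,n_r}$ to $T=T'\cup A$ by folding $A$ back onto $T'$ (sending the added end point to a chosen interior point of $T'$ and extending affinely), which leaves $m$ unchanged and perturbs the entropy by less than $\eps$.

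It then remains to verify the three conditions of Definition~\ref{D:specialMarkovMap}: (a) and (b) are built into the choice of $s_0,\ldots,s_m$, and (c) follows from transitivity via Lemma~\ref{L:transitiveMarkov}, since the transition matrix $M_f$ is irreducible and hence every basic arc admits a path to $A_S$. The main obstacle will be the combinatorial bookkeeping in the inductive step inside $\TTt_0$ --- choosing compatible partition points, slopes, and orientations on each $T_j$ so that the glued map is continuous, $P$-Markov with irreducible transition matrix, and realizes the sharp Ye bound. The substantive addition over Ye's argument is the identification of the distinguished point $s_0$, adjacent in $P$ to the final end point $s_m$ of the cycle; this point is essentially forced by the construction, but one must check that it indeed lies in $P$ with no other $P$-point strictly between itself and $s_m$.
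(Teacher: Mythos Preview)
Your approach is essentially the paper's: both use Ye's maps from \cite[Lemmas~4.6, 4.7]{Ye} and identify the tuple $S$ inside that construction, with condition~(c) of Definition~\ref{D:specialMarkovMap} coming for free from transitivity. The paper's execution is considerably more economical, however: rather than redoing Ye's induction on $(i,r)$, it simply quotes Ye's notation, setting $s_q=t_q^1$ for $1\le q\le m$ and $s_0=x^1_{(n+n_0)m_0}$, and observes that (a) and (b) are visible on \cite[p.~301]{Ye}.

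One caution about your reconstruction sketch: in the inductive step for $\TTt_0^{n_1,\dots,n_r}$, the subtrees $T_1,\dots,T_{n_r}$ are only required to lie in $\TTt_0^{n_1,\dots,n_{r-1}}$ and need not be mutually homeomorphic, so ``sending $T_j$ affinely onto $T_{j+1}$'' is not literally available. Ye's actual construction does not glue independently chosen maps $f_j$ on the pieces; it builds a single $P_n$-linear map on all of $T$ with a uniform combinatorial pattern. If you carry out the induction in full you will need to follow that pattern rather than the gluing you describe, but since the lemma is really just a reading of \cite{Ye}, the cleaner route is the paper's: cite and identify.
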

\begin{proof}
We will keep the notation from the proofs of \cite[Lemmas~4.6, 4.7]{Ye}.
Let $T\in\TTt_i^{n_1,\dots,n_r}$, where $i\ge 0$, $r\ge 1$ and $n_1,\dots,n_r\ge 2$.
Put $m=m_0=\prod_{l=1}^r n_l = \card \Endpoints(T)-i$
and $n_0=r+i$.
The end points of $T$ are $t_q^1$ ($1\le q\le m_0$) and
$y^j$ ($1\le j \le i$). The maps $f_n=f_{i,r,n}$ constructed
in the proofs are $P_n$-linear Markov maps (we put $P_n=Q_n$ if $i\ge 1$).
If we put $S_n=(s_0,\dots,s_{m_0})$
with $s_q=t_q^1$ for $q=1,\dots,m_0$, and $s_0=x^1_{(n+n_0)m_0}$,
then the first two conditions from Definition~\ref{D:specialMarkovMap}
are satisfied, see \cite[p.~301]{Ye}.
The third one is satisfied trivially since the maps $f_n$ are transitive.
Thus every $f_n$ is $(P_n,S_n)$-linear.
Since $\lim h(f_n)=(1/m_0)\log 2$, the proof is finished.
\end{proof}

Let us note that in the proofs of Propositions~\ref{P:exactExtensionToTree}
and \ref{P:thmA} we do not need to assume that
the considered $(P,S)$-linear maps are transitive. In Appendix~1 we give simple constructions of
(non-transitive) zero entropy $(P,S)$-linear Markov maps on stars and combs; this makes
the proof of Theorem~\ref{T:main} not dependent on the results of \cite{Ye}.

\subsection{Exact $P$-linear Markov maps}
In Proposition~\ref{P:exactExtensionToTree} we show  that every $(P,S)$-linear Markov
map $f$ on a tree $R$ can be used to define an exact map $g$ on a tree
$T\supseteq R$ such that the entropy of $g$ is small provided the entropy of $f$ is small
and the cardinality of $S$ is large.
Such maps $g$ will be used in Section~\ref{S:exactOnDendrites}
in the construction of exact small entropy maps
on dendrites, see Proposition~\ref{P:thmA}.
First we introduce some notation which will be used for the entropy estimation.

Let $\PPp$ denote the system of maps of two variables
$x,N$ of the type
$$
 \psi:(1,\infty)\times\NNN \to \RRR,
 \quad
 \psi(x,N)
 = \frac{b_{1,N}}{x} + \frac{b_{2,N}}{x^2} + \dots + \frac{b_{i,N}}{x^i} + \dots
$$
such that for some $b=b_\psi>0$ and $k=k_\psi\in\NNN$ it holds that
\begin{equation}\label{EQ:functionsFromPPp}
 \abs{b_{i,N}} \le b\cdot N^k \cdot i^k
 \qquad\text{for every } i,N\in\NNN.
\end{equation}
Notice that (\ref{EQ:functionsFromPPp}) guarantees that $\psi(x,N)$ is always
finite, since
$$
 \abs{\psi(x,N)}
 \le
 b\cdot N^k \cdot \nu_k(x),
 \qquad\text{where}
 \quad
 \nu_k(x)=\sum_{i=1}^\infty \frac{i^k}{x^i}<\infty.
$$

\begin{lemma}\label{L:systemP}
The system $\PPp$ is closed under multiplication and linear combinations,
that is, $c\psi + c'\psi'\in\PPp$ and $\psi\cdot \psi'\in\PPp$
for every
$\psi,\psi'\in \PPp$ and  $c,c'\in\RRR$.
\end{lemma}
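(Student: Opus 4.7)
The plan is to verify the two closure properties directly from the coefficient bound (\ref{EQ:functionsFromPPp}), treating each $\psi\in\PPp$ as a formal series in $1/x$ which, thanks to the polynomial bound on its coefficients, is absolutely convergent on $(1,\infty)\times\NNN$.

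First I would handle linear combinations. Given $\psi,\psi'\in\PPp$ with coefficients $b_{i,N}, b'_{i,N}$ and constants $b_\psi, k_\psi, b_{\psi'}, k_{\psi'}$, the function $c\psi+c'\psi'$ is obtained by adding termwise:
$$
(c\psi+c'\psi')(x,N) = \sum_{i=1}^\infty \frac{c\,b_{i,N} + c'\,b'_{i,N}}{x^i}.
$$
Setting $k = \max\{k_\psi,k_{\psi'}\}$ and using $i^{k_\psi}\le i^k$, $N^{k_\psi}\le N^k$ (and likewise for $\psi'$) since $i,N\ge 1$, the new coefficients satisfy
$$
|c\,b_{i,N} + c'\,b'_{i,N}| \le \bigl(|c|b_\psi + |c'|b_{\psi'}\bigr)\cdot N^k\cdot i^k,
$$
so $c\psi+c'\psi'\in\PPp$ with $b = |c|b_\psi + |c'|b_{\psi'}$ and this $k$.

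Next, for the product $\psi\cdot\psi'$, the bound (\ref{EQ:functionsFromPPp}) ensures absolute convergence of each series on $(1,\infty)$, so the Cauchy product is justified:
$$
(\psi\cdot\psi')(x,N) = \sum_{n=2}^\infty \frac{c_{n,N}}{x^n},
\qquad
c_{n,N} = \sum_{i=1}^{n-1} b_{i,N}\, b'_{n-i,N}
$$
(with $c_{1,N}=0$, which fits the form trivially). Estimating coefficientwise,
$$
|c_{n,N}| \le \sum_{i=1}^{n-1} b_\psi N^{k_\psi} i^{k_\psi}\cdot b_{\psi'} N^{k_{\psi'}} (n-i)^{k_{\psi'}} \le b_\psi b_{\psi'}\, N^{k_\psi+k_{\psi'}}\cdot (n-1)\cdot n^{k_\psi+k_{\psi'}},
$$
which is at most $b_\psi b_{\psi'}\, N^{k}\, n^{k}$ for $k = k_\psi + k_{\psi'} + 1$ (using $N\ge 1$). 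Hence $\psi\cdot\psi'\in\PPp$ with $b = b_\psi b_{\psi'}$ and this $k$.

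No step poses a real obstacle; the only mild trick is the crude estimate $\sum_{i=1}^{n-1} i^{k_\psi}(n-i)^{k_{\psi'}}\le n^{k_\psi+k_{\psi'}+1}$ in the product case, which forces the exponent to increase by one.
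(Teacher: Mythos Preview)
Your proof is correct and follows essentially the same approach as the paper's own proof: the paper dismisses the linear combination case as trivial and, for the product, uses the identical Cauchy-product computation with the same crude bound $\sum_{j=1}^{i-1} j^{k}(i-j)^{k'} \le (i-1)\,i^{k+k'} < i^{k+k'+1}$, arriving at the same constants $b_\varphi = b_\psi b_{\psi'}$ and $k_\varphi = k_\psi + k_{\psi'} + 1$.
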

\begin{proof}
Let $c,c'\in\RRR$, $\psi,\psi'\in \PPp$ and  let
$b_{i,N}, b'_{i,N}$ be the coefficients of $\psi,\psi'$, respectively.
Put $b=b_\psi$, $k=k_\psi$, $b'=b_{\psi'}$ and $k'=k_{\psi'}$.

The fact that $c\psi + c'\psi'\in\PPp$ is trivial.
To show that $\varphi=\psi\cdot \psi'$ belongs to $\PPp$, realize that
$\varphi(x,N)=\sum_{i=2}^\infty c_{i,N}/x^i$, where
the coefficients $c_{i,N}$ satisfy
$$
 \abs{c_{i,N}}
 =
 \left\lvert
   \sum_{j=1}^{i-1} b_{j,N}\cdot b'_{i-j,N}
 \right\rvert
 \le b\cdot b'\cdot N^{k+k'} \cdot\sum_{j=1}^{i-1} j^k\cdot (i-j)^{k'} .
$$
The sum in the right-hand side can be bounded from above by
$i^{k+k'}\cdot(i-1) < i^{k+k'+1}$.
So we can take $b_{\varphi}=b\cdot b'$ and $k_{\varphi}=k+k'+1$.
\end{proof}

Denote by $\Poo$ the subsystem of $\PPp$ consisting of those $\psi\in\PPp$ for which
there is $l\in \NNN$ (not depending on $N$) with
$b_{1,N}=b_{2,N}=\dots=b_{N-l,N}=0$ for every $N\ge l$.

\begin{lemma}\label{L:systemPo}
The following are true:
\begin{enumerate}
	\item[(a)] $\psi\in\Poo$ if and only if there is $\varphi\in\PPp$
	and $l\in\NNN$ such that
		$$ 
		 \psi(x,N)=\frac{1}{x^{N-l}} \cdot \varphi(x,N).
		$$ 

	\item[(b)] If $\delta>0$ and $\psi\in\Poo$ then, for $N\to\infty$,
		 $x^{N/2}\cdot \psi(x,N)$ converges to $0$ uniformly on
     $[1+\delta,\infty)$.

	\item[(c)]
    $c\psi + c'\psi'\in\Poo$  and $\psi\cdot \psi''\in\Poo$
   for every $\psi,\psi'\in\Poo$, $\psi''\in\PPp$ and $c,c'\in\RRR$.
\end{enumerate}
\end{lemma}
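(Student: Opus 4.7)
I plan to prove all three parts by working directly with the coefficient sequences $b_{i,N}$ defining elements of $\PPp$.

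For part (a), the forward direction: given $\psi\in\Poo$ with parameter $l$, I would set $c_{j,N}=b_{j+N-l,N}$ (with the convention $b_{i,N}=0$ for $i\le 0$) and let $\varphi$ be the element of $\PPp$ with these coefficients. Because the $\Poo$ hypothesis forces $b_{i,N}=0$ for $i\le N-l$, a straightforward reindexing of the series yields the identity $\psi(x,N)=x^{-(N-l)}\varphi(x,N)$ for all $N$. The polynomial bound needed for $\varphi\in\PPp$ comes from the estimate $(j+N-l)^{k_\psi}\le(j+N)^{k_\psi}\le(2jN)^{k_\psi}$, valid for $j,N\ge 1$. The reverse direction is a direct expansion of $x^{-(N-l)}\varphi(x,N)$ that shows the resulting $\psi$-coefficients vanish for $i\le N-l$.

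For part (b), I would use (a) to write $x^{N/2}\psi(x,N)=x^{l-N/2}\varphi(x,N)$ with $\varphi\in\PPp$. Since the coefficients of $\varphi$ grow polynomially in $N$, one obtains the uniform bound $|\varphi(x,N)|\le b_\varphi N^{k_\varphi}C(\delta)$ on $[1+\delta,\infty)$, where $C(\delta)=\sum_{j\ge 1}j^{k_\varphi}(1+\delta)^{-j}<\infty$. For $N>2l$ the exponent $l-N/2$ is negative, so the supremum of $x^{l-N/2}$ on $[1+\delta,\infty)$ is $(1+\delta)^{l-N/2}$, yielding
\[
\sup_{x\ge 1+\delta}\bigl|x^{N/2}\psi(x,N)\bigr|
 \le (1+\delta)^{l-N/2}\cdot b_\varphi N^{k_\varphi}\cdot C(\delta),
\]
which tends to $0$ as $N\to\infty$ because the exponential decay dominates the polynomial factor.

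For part (c), the linear combination part is immediate from the coefficient description: the $i$-th coefficient of $c\psi+c'\psi'$ vanishes whenever $i\le N-\max(l_\psi,l_{\psi'})$, and $\PPp$-membership comes from Lemma~\ref{L:systemP}. For the product $\psi\cdot\psi''$ with $\psi\in\Poo$ and $\psi''\in\PPp$, the $i$-th coefficient equals $\sum_{j=1}^{i-1}b_{j,N}b''_{i-j,N}$; any nonzero summand requires both $j\ge N-l_\psi+1$ and $i-j\ge 1$, which forces $i\ge N-l_\psi+2$ and hence places $\psi\cdot\psi''$ in $\Poo$, with $\PPp$-membership again supplied by Lemma~\ref{L:systemP}. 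The main technical delicacy I anticipate is the bookkeeping in part (a) when $N<l$, where the factor $x^{-(N-l)}$ carries a non-negative exponent; this is absorbed by the convention that the low-index coefficients $c_{j,N}$ vanish, ensuring the identity is genuine rather than merely formal.
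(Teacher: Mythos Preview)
Your proposal is correct and follows essentially the same route as the paper: the same reindexing $c_{j,N}=b_{N-l+j,N}$ in (a) with the same polynomial bound idea (the paper uses $(N-l+i)^k\le N^k i^k$ where you use $(j+N)^{k_\psi}\le(2jN)^{k_\psi}$), the same estimate via $\sum j^k x^{-j}$ in (b), and in (c) the paper simply invokes (a) together with Lemma~\ref{L:systemP} rather than redoing the coefficient computation, but the content is identical. Your explicit handling of the case $N<l$ is a point the paper glosses over.
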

\begin{proof}
(a) If $\psi\in\Poo$, there is $l\in\NNN$ such that $b_{i,N}=0$ for every $N$ and
every $i\le N-l$. Put $\varphi(x,N)=\sum_i c_{i,N} x^{-i}$, where $c_{i,N}=b_{N-l+i,N}$.
Then $\psi(x,N)=x^{-(N-l)} \varphi(x,N)$ and
$\abs{c_{i,N}}\le b\cdot N^k \cdot (N-l+i)^k \le b\cdot N^{2k}\cdot i^k$,
so $\varphi\in\PPp$. The reverse implication is obvious.

(b) follows from the inequality
$
 \abs{x^{N/2}\cdot \psi(x,N)}
 \le
 {x^{-(N/2-l)}} \cdot b \cdot N^k \cdot \nu_k(x)
$,
where $k=k_{\varphi}$, $b=b_{\varphi}$ with $\varphi$ is given by (a)
and $\nu_k(x)=\sum_{i=1}^\infty i^k x^{-i}$.

(c) follows from (a) and Lemma~\ref{L:systemP}.
\end{proof}

\begin{proposition}\label{P:exactExtensionToTree}
Let $R$ be a tree and let $f:R\to R$
be a $(P,S)$-linear map, where $S=(s_0,s_1,\dots,s_n)$
with $n\ge 2$. Let $T\supseteq R$ be a tree such that
\begin{equation}\label{EQ:T'=TcupNArcs}
 T=R\sqcup \bigsqcup_{i=1}^n (s_i,t_i].
\end{equation}
Then for every $\eps>0$
there is a map $g:T\to T$ such that
the following hold:
\begin{enumerate}
	\item[(a)] $g$ is a $Q$-linear Markov map, where $Q\cap R=P$;
	\item[(b)] $g$ is exactly Devaney chaotic;
	\item[(c)] $(1/n)\log 2-\eps < h(g) < \max\{h(f), (1/n)\log{2}\}+\eps$;
	\item[(d)] $g(x)=f(x)$ for every $x\in R\setminus (s_0,s_n]$.
\end{enumerate}
\end{proposition}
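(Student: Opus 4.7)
The plan is to extend $f$ to $g$ on $T$ so that the newly attached whiskers $[s_i,t_i]$ together with $A_S$ form an almost-cyclic subsystem with a single folding that produces mixing, while the rest of $T$ is carried by $f$. I would set $g=f$ on $R\setminus(s_0,s_n]$ (which gives~(d)); on each whisker $[s_i,t_i]$ with $1\le i\le n-1$ define $g$ as the linear homeomorphism onto $[s_{i+1},t_{i+1}]$ with $s_i\mapsto s_{i+1}$, $t_i\mapsto t_{i+1}$; pick an interior point $m_n\in(s_n,t_n)$ and define $g$ as a piecewise linear tent on $[s_n,t_n]$ with $s_n\mapsto s_1$, $m_n\mapsto t_1$, and $g(t_n)$ chosen to lie inside $R$ so that the second branch overshoots $[s_1,t_1]$ slightly; finally pick $v\in(s_0,s_n)$ and define $g$ on $A_S$ as a tent with $s_0\mapsto s_1$, $v\mapsto t_1$, $s_n\mapsto s_1$ (the endpoint values being forced by continuity with $f(s_0)=s_1$ and with the tent above). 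Setting $Q=P\cup\{t_1,\dots,t_n,m_n,v\}$ gives $Q\cap R=P$, and, after a suitable rescaling of the new basic-arc lengths, $g$ is $Q$-linear Markov, establishing~(a).

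To prove~(b), I would verify primitivity of $M_g$ and apply Lemma~\ref{L:transitiveMarkov}(b). The whisker block is irreducible because each $[s_i,t_i]$ ($i<n$) maps linearly onto the next and both halves of $[s_n,t_n]$ map onto $[s_1,t_1]$; condition~(c) of Definition~\ref{D:specialMarkovMap} guarantees that every $R$-basic arc of $f$ reaches $A_S$ in the transition graph, and since both halves of $A_S$ feed into the whisker cycle every basic arc eventually reaches it; the overshoot chosen for $g(t_n)$ supplies the return path from the whisker cycle back into $R$, giving strong connectivity. Primitivity is then arranged by adjusting the overshoot so that one of the resulting cycles has length coprime to $n$. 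Together with $T$ infinite and the standard fact that exact Markov tree maps have dense periodic points, this yields~(b).

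The entropy estimates in~(c) constitute the central technical obstacle. The lower bound $h(g)>(1/n)\log 2-\eps$ is obtained by applying the rome method~(\ref{EQ:romeMethod}) to the rome $\RRr=\{[s_1,t_1]\}$ within the whisker block: the two simple length-$n$ loops through the halves of $[s_n,t_n]$ give the leading term $2/x^n$ in $R_\RRr(x)$, whose associated root is $2^{1/n}$, and the longer auxiliary loops introduced by the overshoot contribute only higher-order terms (members of the class $\Poo$ from Lemma~\ref{L:systemPo}) that keep the dominant root within $\eps$ of $2^{1/n}$. For the upper bound $h(g)<\max\{h(f),(1/n)\log 2\}+\eps$ my plan is to first apply Lemma~\ref{L:parryMapOnTree} to equip $R$ with a convex metric giving $f$ constant slope $e^{h(f)}$, and then to choose the lengths of the new basic arcs on the whiskers and on $A_S$ so that $g$ has (almost) constant slope equal to $\max\{e^{h(f)},2^{1/n}\}$ on all of $T$; by Lemma~\ref{L:entropyMarkov}, $h(g)=\log\lambda_{M_g}$, which then coincides with the logarithm of that slope. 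The main difficulty is that the correct scaling is determined by a Perron eigenvector of $M_g$ whose entries one cannot prescribe freely; making the overshoot on $[m_n,t_n]$ sufficiently short should ensure that the cross-block transitions act as a $\Poo$-perturbation of the block-diagonal matrix $M_f\oplus M_{\text{whisker}}$, whose Perron eigenvalue is exactly $\max\{e^{h(f)},2^{1/n}\}$, so the perturbed $\lambda_{M_g}$ stays within $\eps$ of that value.
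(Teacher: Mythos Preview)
Your proposal has a genuine gap in part~(c), and the root cause is a confusion between metric and combinatorial data for Markov maps.

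Once $Q$ and the covering relations are fixed, the transition matrix $M_g$ is fixed, and by Lemma~\ref{L:entropyMarkov} the entropy $h(g)=\log^+\lambda_{M_g}$ is a \emph{single number} determined entirely by that matrix. Your construction has a fixed, finite $Q$ (namely $P\cup\{t_1,\dots,t_n,m_n,v\}$), hence a fixed $M_g$, hence a fixed $h(g)$. Nothing you do to arc \emph{lengths}---``making the overshoot short,'' rescaling whiskers, or trying to force slope $\max\{e^{h(f)},2^{1/n}\}$---changes $M_g$ or $\lambda_{M_g}$. As you yourself note, constant slope is only possible at slope $\lambda_{M_g}$; you cannot choose that value. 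Your invocation of $\Poo$ is misplaced: the class $\Poo$ is defined relative to a combinatorial parameter $N\to\infty$ that shifts coefficients to orders $x^{-(N-l)}$ and beyond. Your construction has no such parameter, so there is nothing for Lemma~\ref{L:systemPo} to act on. In short, you have no mechanism to make $h(g)$ approach $\max\{h(f),(1/n)\log 2\}$.

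There are two secondary problems as well. First, $v\in(s_0,s_n)\subseteq R$ is a new vertex, so $Q\cap R=P\cup\{v\}\ne P$, violating~(a). Second, your ``overshoot'' from $[m_n,t_n]$ into $R$ covers only arcs near $s_1$; since Definition~\ref{D:specialMarkovMap}(c) guarantees paths \emph{into} $A_S$ but not \emph{out of} a generic $B_j$, irreducibility of $M_g$ can fail when $f$ is not itself transitive.

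The paper's proof fixes all of this by introducing exactly the missing parameter: each whisker $[s_i,t_i]$ is subdivided into $N$ subarcs $A_i^j$, and one lets $N\to\infty$. The map $g_N$ sends $A_i^j\to A_{i+1}^j$ cyclically, with a carefully placed fold near the top (around $A_n^{N-5},\dots,A_n^{N-2}$) that produces the $2^{1/n}$ growth, and with $A_n^{N-1}$ mapping onto \emph{all} of $R$ (via a fixed surjection $\varphi:I\to R$), which guarantees irreducibility regardless of $f$. The point is that any loop passing through $R$ must first climb $\sim nN$ whisker arcs, so its contribution to the rome determinant is $O(x^{-nN})$; this is precisely what lands in $\Poo$, and Lemma~\ref{L:systemPo}(b) then shows these terms vanish uniformly as $N\to\infty$, forcing $\lambda_{g_N}\to\max\{\lambda_f,2^{1/n}\}$. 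The combinatorial subdivision, not any metric tuning, is what drives the entropy estimate.
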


\begin{proof} Endow $T$ with a convex metric.
For $i=1,\dots,n$ put $A_i=[s_i,t_i]$ and define an order on $A_i$ in
such a way that $s_i<t_i$. Denote the $P$-basic arcs of $f$ by $B_1,\dots,B_p$, where
$B_p=[s_0,s_n]$.
Let $\varphi:I\to R$ be a continuous surjection such that $\varphi(0)=\varphi(1)=s_1$
and, for some $m\in\NNN$, $\varphi$ maps linearly every $[(l-1)/m, l/m]$
onto some $B_{j_l}$.

Fix an integer $N>6$. In every $[s_i,t_i]$ choose points $t_i^j$
($j=0,1,\dots,N$) in such a way that
$s_i=t_i^0<t_i^1<\dots<t_i^N=t_i$; for $j=1,\dots,N$ put
$A_i^j=[t_i^{j-1},t_i^j]$. Moreover, take points
$t_n^{N-2}=t_n^{N-1,0}<t_n^{N-1,1}<\dots < t_n^{N-1,m}=t_n^{N-1}$
in $A_n^{N-1}$ and put $A_n^{N-1,l}=[t_n^{N-1,l-1}, t_n^{N-1,l}]$
for $l=1,\dots,m$. 

\begin{figure}[ht!]
  \includegraphics{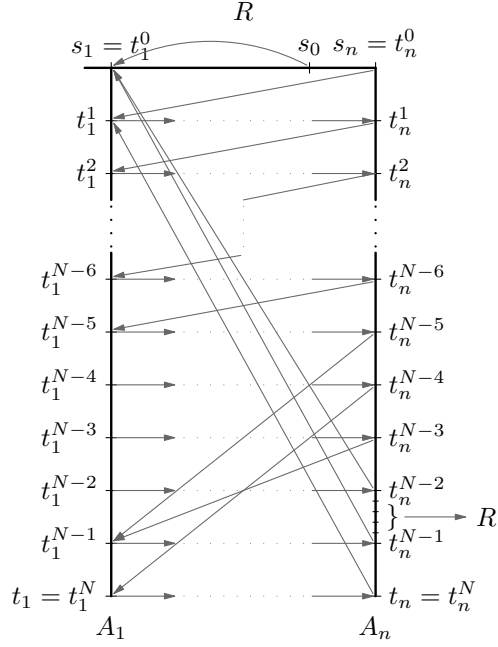}
  \caption{The map $g=g_N:T\to T$ (here $R$ is an arc)}
  \label{Fig:P:exactExtensionToTree-map}
\end{figure}

Put
$$
 Q= P
 \sqcup \{t_i^j:\ 1\le i\le n,\ 1\le j\le N\}
 \sqcup \{t_n^{N-1,l}:\ 1\le l <m\}
$$
and define a map $g=g_N:T\to T$ as follows (see Figure~\ref{Fig:P:exactExtensionToTree-map}):
\begin{itemize}
	\item $g(x)=f(x)$ for $x\in R\setminus (s_0,s_n]$;
	\item $g(t_i^j)=t_{i+1}^{j}$ for every $i<n$ and $j\ge 1$;
	\item $g(t_n^j)=
	  \begin{cases}
	   t_1^{j+1}  &\text{if } j=0,1,\dots,N-6;
	  \\
	   t_1^{N-1}  &\text{if } j=N-5,N-3;	
	  \\
	   t_1^{N}  &\text{if } j=N-4;	
	  \\
	   t_1^{0}=s_1  &\text{if } j=N-2, N-1;	
	  \\
	   t_1^{1}  &\text{if } j=N;	
	  \end{cases}
	  $
	  \\
	  in particular, $g(s_n)=g(t_n^0)=t_1^1$;
	\item  $g(t_n^{N-1,l})=\varphi(l/m)$ for every $1\le l < m$;
	\item  $g$ is $Q$-linear.
\end{itemize}
Obviously, the map $g$ satisfies (d), it is a $Q$-linear Markov map and $Q\cap R=P$,
so (a) is also satisfied.
To prove (b) and (c) we need to describe the transition graph of $g$.
The $Q$-basic arcs are:
\begin{equation}\label{EQ:Q-basicArcs}
\begin{split}
 B_i \quad (1\le i \le p),
 \quad
 &A_i^j \quad (1\le i \le n, 1\le j\le N, (i,j)\ne (n,N-1))
 \\
 \text{and}\quad
 &A_n^{N-1,l} \quad (1\le l\le m).
\end{split}
\end{equation}
The transition graph $G_g$ of $g$ is as follows (see
Figure~\ref{Fig:P:exactExtensionToTree-graph}):
\begin{itemize}
	\item $B_i\tof{g} B_j$ if and only if $B_i\tof{f} B_j$ for every $i<p$ and every $j$;
	\item $B_p\tof{g} A_1^1$;
	\item $A_i^j\tof{g} A_{i+1}^j$ if $i<n$ and $(i,j)\ne (n-1,N-1)$;
	\item $A_{n-1}^{N-1}\tof{g} A_n^{N-1,l}$ for every $l=1,\dots,m$;
	\item $A_n^j \tof{g}
	  \begin{cases}
	   A_1^{j+1}  &\text{if } j\le N-6;
	  \\
	   A_1^{N-k} \ (k=1,\dots,4)  &\text{if } j= N-5;
	  \\
	   A_1^{N}    &\text{if } j= N-4,N-3;
	  \\
	   A_1^{k} \ (k=1,\dots,N-1)  &\text{if } j= N-2;
	  \\
	   A_1^1  &\text{if } j= N;
	  \end{cases}$
	\item $A_n^{N-1,l}\tof{g} B_{j_l}$ for every $l=1,\dots,m$.
\end{itemize}

\begin{figure}[ht!]
  \includegraphics{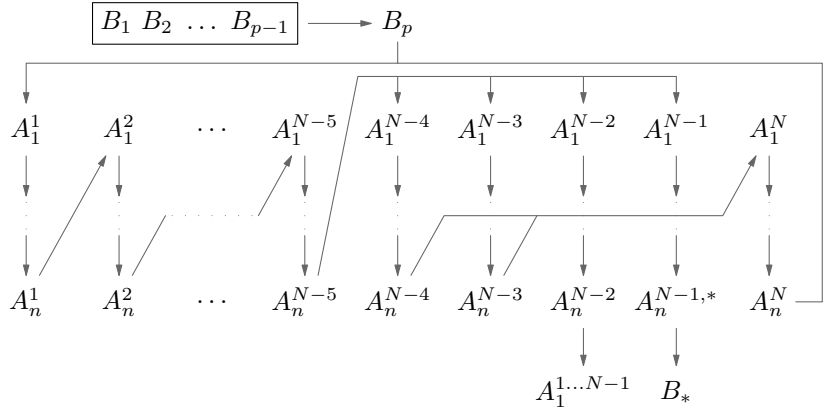}
  \caption{The transition graph of $g$}
  \label{Fig:P:exactExtensionToTree-graph}
\end{figure}

Now we prove (b).
Since there is a path from every $B_i$ to $B_p$ ($f$ is $(P,S)$-linear),
$\varphi$
is a surjection (so $\{j_1,\dots,j_m\}=\{1,\dots,p\}$)
and there is a path from $B_p$ to any $A_i^j, A_n^{N-1,l}$,
we have that the transition graph $G_g$ is irreducible.
But we even have that $G_g$ is primitive since there
are two loops over $B_p$ the lengths of which differ by $1$.
Indeed, take a loop through some $A_n^{N-1,l}$ such that $j_l=p$;
then take $l'$ such that $B_{j_{l'}}\tof{g} B_p$, $j_{l'}<p$ (notice that
$p\ge 2$ since $n\ge 2$) and
construct another loop by replacing the subpath
$A_{n-1}^{N-1}\tof{g} A_n^{N-1,l} \tof{g} B_p$
by $A_{n-1}^{N-1}\tof{g} A_n^{N-1,l'} \tof{g} B_{j_{l'}} \tof{g} B_p$.
So $g$ is an exact map; since on trees transitivity implies dense periodicity
we have that $g$ is exactly Devaney chaotic.

\medskip

The rest of the proof is devoted to (c).
Recall that $g=g_N$ depends on $N$. To prove the upper bound in (c)
it suffices to
show that
$\limsup_{N\to\infty} h(g_N) \le  \max\{h(f), (1/n)\log 2\}$.
Let $\lambda_f$ be the Perron eigenvalue of the transition matrix of $f$.
Let $M_{g_N}$ be the transition matrix of ${g_N}$ and
$\lambda_{g_N}$ be the Perron eigenvalue of $M_{g_N}$.
Since $M_{g_N}$ is primitive we have that $\lambda_{g_N}>1$.
Let $M_N$ be the $01$-matrix created from $M_{g_N}$ by replacing $0$'s
with $1$'s in the positions corresponding to the edges from $B_p$ to $B_i$
provided $B_p\tof{f} B_i$. If $\lambda_N$ denotes the Perron eigenvalue of $M_N$
then, since $M_{g_N}\le M_N$,
\begin{equation}\label{EQ:entropy_gN}
 h(g_N)
 = \log \lambda_{g_N}
 \le \log \lambda_N.
\end{equation}

Since every loop in $G_g$ which is not a subset of $\{B_1,\dots,B_p\}$
passes $A_n^1$ or $A_n^{N-2}$, the set
$$
 \RRr=\{B_1,\dots,B_p;\ B_{p+1}=A_n^1, B_{p+2}=A_n^{N-2}\}
$$
of $Q$-basic arcs is a rome for $M_N$. Notice that trivially $\{B_1,\dots,B_p\}$
is a rome for the transition matrix $M_f$ of $f$.
For $i,j=1,\dots,p+2$ put
$$
 r_{ij}
 =r_{ij}(x,N)
 = \sum\limits_{\pi}
  \frac{1}{x^{\abs{\pi}}}
$$
where the summation is over all $\RRr$-simple paths $\pi$ from $B_i$ to $B_j$.
Since for $i,j\le p$ any simple
path from $B_i$ to $B_j$ visits only
arcs $B_1,\dots,B_p$, for $i,j\le p$ the quantity $r_{ij}$ does not depend on $N$.
Let $R_N(x)$ and $R_f(x)$ denote the matrices
$(r_{ij}(x,N))_{i,j=1}^{p+2}$ and $(r_{ij}(x))_{i,j=1}^{p}$, respectively.
Define also
$$
 \Phi_N(x)=\det (R_N(x)-E)
 \qquad\text{and}\qquad
 \Phi_f(x)=\det (R_f(x)-E)
$$
where $E$ denotes the identity matrix of the corresponding dimension.
Then $\lambda_N$ and $\lambda_f$ are the largest positive roots
of $\Phi_N$ and $\Phi_f$, respectively (see Subsection~\ref{SS:PerronFrobenius}).
Our aim is to show that
\begin{equation}\label{EQ:limsup-lambda-N}
 \limsup_{N\to\infty} \lambda_N
 \le
 \max\left\{\lambda_f,\sqrt[n]{2}\right\}.
\end{equation}
To this end we must first estimate $r_{ij}=r_{ij}(x,N)$.

Realize that
\begin{equation}\label{EQ:rij:i<=p<j}
 r_{i,p+1}=0 \quad\text{for } i<p,
 \qquad
 r_{p,p+1}=x^{-n}
 \qquad\text{and}\qquad
 r_{i,p+2}=0 \quad\text{for } i\le p,
\end{equation}
since every path from $B_i$ ($i\le p$) to $B_{p+1}=A_n^1$ or to $B_{p+2}=A_n^{N-1}$
must pass both $B_p$ and $B_{p+1}$; hence the only such path which is simple is
the path
$B_p\to A_1^1 \to A_2^1\dots\to A_n^1=B_{p+1}$ and the length of it is $n$.

The simple paths starting from $B_{p+1}=A_n^1$ to any member of $\RRr$
are ``long'' (their lengths are at least $n(N-5)$) and
their number does not depend on $N$.
Indeed, there are $m$ simple paths from $B_{p+1}$ to some of $B_i$'s ($i\le p$),
two simple loops over $B_{p+1}$ and one simple path from $B_{p+1}$ to $B_{p+2}$.
Hence
\begin{equation}\label{EQ:rij:i=p+1}
 r_{p+1,j}\in\Poo \quad\text{for every } j=1,\dots,p+2.
\end{equation}

There are exactly $(N-3)$ simple loops over $B_{p+2}$; indeed, for any
$j=2,\dots,N-2$ there is exactly one which starts with
$B_{p+2} \to A_1^j$ and
the length of it is $n(N-1-j)$. Hence, for $x>1$,
\begin{equation}\label{EQ:rij:i=p+2=j}
 r_{p+2,p+2}
 =\frac{1}{x^{n}} + \frac{1}{x^{2n}} + \dots +\frac{1}{x^{(N-3) n}}
 =\frac{1}{x^n-1} + \oo,
\end{equation}
where $o\in\Poo$.
Finally,
\begin{equation}\label{EQ:rij:i=p+2>j}
 r_{p+2,i}\in\PPp \quad\text{for every } i\le p+1.
\end{equation}

Now we are ready to estimate $\Phi_N(x)=\det(R_N(x)-E)$. Expanding the determinant
along the $(p+1)$-st row, then along
the last column and using
(\ref{EQ:rij:i=p+1}), (\ref{EQ:rij:i<=p<j}), (\ref{EQ:rij:i=p+2=j}),
(\ref{EQ:rij:i=p+2>j}) and Lemma~\ref{L:systemPo} give
(with $\oo,\oo',\oo''\in\Poo$)
$$
 \Phi_N(x)=(-1)
 \cdot
 \left[ -1 + \frac{1}{x^n-1} + \oo  \right]
 \cdot \det(R_f(x)-E) + \oo'
=
 \frac{x^n-2}{x^n-1} \cdot \Phi_f(x) + \oo''.
$$

Recall that $\lambda_N$ denotes the largest positive root of $\Phi_N(x)$.
Put $\lambda=\limsup \lambda_N$; without loss of generality we may assume
that $\lambda_N\to \lambda$ for $N\to\infty$. We want to show that
$\lambda\le \tilde{\lambda}:=\max\{\lambda_f, \sqrt[n]{2}\}$. Assume, on the contrary,
that $\lambda>\tilde{\lambda}$; so there is $N_0\in\NNN$ and $\delta>0$
such that $\lambda_N\ge \tilde{\lambda}+\delta$ for every $N\ge N_0$.
Put $\psi(x,N)=\Phi_N(x)-\frac{x^n-2}{x^n-1}\cdot \Phi_f(x)$ and take $\eps>0$.
Since $\psi\in\Poo$, by Lemma~\ref{L:systemPo}
there is $N_1\in\NNN$ such that
\begin{equation}\label{EQ:psi_xN-unifConverg}
 x^{N/2}\cdot \abs{\psi(x,N)} <\eps
 \qquad\text{for every } N\ge N_1
 \text{ and every } x\ge \tilde{\lambda}.
\end{equation}
On the other hand,
$\Phi_f(x)\ne 0$ for $x>\lambda_f$; since $x^p \Phi_f(x)$ is a polynomial by
(\ref{EQ:romeMethod}), there is $c>0$ with $x^p \abs{\Phi_f(x)}\ge c$
for every $x\ge\tilde{\lambda}+\delta$.
Thus for every $N\ge N_0$ we have
$$
  \lambda_N^{N/2}\cdot \abs{\psi(\lambda_N,N)}
 = \lambda_N^{N/2}\cdot
   \left\lvert
     0
     -  \frac{\lambda_N^n-2}{\lambda_N^n-1}\cdot \Phi_f(\lambda_N)
   \right\rvert
 \ge
   c\lambda_N^{N/2-p}
   \cdot \frac{\lambda_N^n-2}{\lambda_N^n-1},
$$
which goes to infinity for $N\to\infty$;
this contradicts (\ref{EQ:psi_xN-unifConverg}).
Hence we have proved (\ref{EQ:limsup-lambda-N}).
Since $h(f)=\log^+ \lambda_f$,
the upper bound in (c) for $g=g_N$ with large $N$
follows from (\ref{EQ:entropy_gN}).

To prove the lower bound in (c), let $\tilde{M}_N\le M_{g_N}$ be
the $01$-matrix obtained from $M_{g_N}$ by keeping only those $1$'s
which correspond to paths $A\to B$ with $A,B\in\{A_i^j:\ i\le n,j\le N-2\}$;
all other entries of $M_{g_N}$ are replaced by $0$.
Then $\tilde\RRr=\{A_n^{N-2}\}$ is a rome for $\tilde{M}_N$ and the corresponding
matrix function is
$$
 R_{\tilde\RRr} = (\tilde{r}_{11}), \qquad
 \tilde{r}_{11}(x,N) = \frac{1}{x^{n}} + \frac{1}{x^{2n}} + \dots + \frac{1}{x^{(N-4)n}}\,.
$$
By a simple analysis we can show that the largest positive root $\tilde{\lambda}_N$
of $\det(R_{\tilde{\RRr}}(x)-E)=\tilde{r}_{11}(x)-1$ converges to $\sqrt[n]{2}$.
Thus $\liminf \lambda_{g_N}\ge \lim \tilde{\lambda}_{N} = \sqrt[n]{2}$.
Now also the lower bound in (c) for $g=g_N$ with large $N$ follows.
\end{proof}


\section{Exact small entropy maps on dendrites}\label{S:exactOnDendrites}

Here we prove the main results of the paper stated in the introduction.
They will follow from the next proposition.

\begin{proposition}\label{P:thmA}
 Let $X$ be a non-degenerate dendrite and let $T\subseteq X$ be a tree with end points
 $t_1,\dots,t_n$ and with every terminal
 edge having non-empty interior in $X$. If there exists
 a map $f:T\to T$ which is $(P,S)$-linear for some $P$ and $S=(s_0,t_1,\dots,t_n)$
 (where $s_0\in P$), then
 $$
  \IED(X)\le \max\{h(f), (1/n)\log 2\}.
 $$
\end{proposition}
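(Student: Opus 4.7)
The plan is first to reduce to constructing an exact map on the tree $T$, then to extend it to the dendrite $X$. For the reduction, I use the non-empty interior hypothesis to choose $u_i\in(v_i,t_i)$ on each terminal edge $[v_i,t_i]$ of $T$ such that $[u_i,t_i]$ is a free arc of $X$. Setting $T^-=T\setminus\bigsqcup_i(u_i,t_i]$, the homeomorphism $\phi\colon T\to T^-$ that linearly compresses each $[v_i,t_i]$ onto $[v_i,u_i]$ conjugates $f$ to a $(P^-,S^-)$-linear map $f^-\colon T^-\to T^-$ with $S^-=(\phi(s_0),u_1,\dots,u_n)$ and $h(f^-)=h(f)$. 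Applying Proposition~\ref{P:exactExtensionToTree} to $f^-$ with extension tree $T=T^-\sqcup\bigsqcup_i(u_i,t_i]$ yields an exactly Devaney chaotic $Q$-linear Markov map $g\colon T\to T$ with $h(g)<\max\{h(f),(1/n)\log 2\}+\eps/2$.

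Next I extend $g$ to a continuous map $F\colon X\to X$. The components of $X\setminus T$ are subdendrites $\closure{D_\alpha}$ attached to $T$ at single points $a_\alpha$, and on each I define $F|_{\closure{D_\alpha}}$ via Proposition~\ref{P:Sp12a-ThmD} (or Lemma~\ref{L:sp12a-arc}) to be a LEL map with $F(a_\alpha)=g(a_\alpha)$, so that $F$ is continuous on $X$. The naive choice $F|_T=g$ would force $F(X)\subseteq T$ and thus preclude exactness on $X$; so I modify the horseshoe in the construction of $g$ on the last extension arc $(u_n,t_n]$ by replacing the piecewise-linear loop $\varphi\colon I\to R$ from the proof of Proposition~\ref{P:exactExtensionToTree} by a Lipschitz surjection $\tilde\varphi\colon I\to X$ visiting every $Q$-basic arc of $T$ as well as every $\closure{D_\alpha}$; such $\tilde\varphi$ exists because $X$ is locally connected and has finite length in a suitable convex metric. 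After this modification, $F$ carries a sub-arc of $(u_n,t_n]$ onto all of $X$ in one step, which combined with exactness of $g$ on $T$ gives exactness of $F$ on $X$.

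For the entropy estimate I apply Proposition~\ref{P:entropyOfPLipschitz} with the $P$-Lipschitz splitting of $X$ formed by the $Q$-basic arcs of $g$ together with the subdendrites $\closure{D_\alpha}$. Letting $\BBb$ be the subsystem of $Q$-basic arcs with small Lipschitz constant, each $\closure{D_\alpha}$ maps into $T$ under $F$ and therefore appears at most once in any itinerary, whereas the modified horseshoe arc is visited with asymptotic frequency $\sim 1/n$ as in the proof of Proposition~\ref{P:exactExtensionToTree}; so the quantity $\theta_\BBb$ from \eqref{EQ:thetaB} can be made arbitrarily small and the entropy bound $h(F)<\max\{h(f),(1/n)\log 2\}+\eps$ follows.

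The hard part will be the extension step: choosing the LEL maps on the (possibly countably many) subdendrites $\closure{D_\alpha}$ with uniformly bounded Lipschitz constants, and choosing the Lipschitz surjection $\tilde\varphi\colon I\to X$ so that its effect on the transition matrix of $F$ asymptotically coincides with that of $\varphi\colon I\to R$ in Proposition~\ref{P:exactExtensionToTree}. The length-summability of the components $\closure{D_\alpha}$ in the convex metrics supplied by Proposition~\ref{P:Sp12a-ThmC} is essential for obtaining uniform Lipschitz bounds and ensuring that the contribution of the $D_\alpha$-pieces to the entropy is negligible.
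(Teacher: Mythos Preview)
Your reduction step---shrinking $T$ to $T^-$ and applying Proposition~\ref{P:exactExtensionToTree} to obtain an exact $Q$-linear map $g=g_N$ on $T$---is correct and matches the paper. The extension to $X$, however, diverges from the paper and has a genuine gap.

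The splitting you propose (the $Q$-basic arcs of $g$ together with the individual subdendrites $\closure{D_\alpha}$) is in general \emph{infinite}, since $X\setminus T$ may have countably many components; but Proposition~\ref{P:entropyOfPLipschitz} applies only to finite splittings (Definition~\ref{D:P-Lipschitz} requires $\AAa=\{X_1,\dots,X_n\}$). Your entropy analysis also contains inconsistencies: once you replace $\varphi$ by a surjection $\tilde\varphi\colon I\to X$, every $\closure{D_\alpha}$ can be revisited each time a path passes through the horseshoe arc, so the ``at most once'' claim fails; and the asymptotic frequency of the horseshoe arc is governed by the auxiliary parameter $N$ of $g_N$, not by the fixed $n$, so $\theta_\BBb$ can be made small only by sending $N\to\infty$.

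The paper handles the extension differently and avoids both problems. Rather than treating the $D_\alpha$ as separate pieces, it absorbs every component of $X\setminus T$ into an enlarged continuum $\tilde A\supseteq A$, one for each $Q$-basic arc $A$, producing a \emph{finite} splitting $\tilde{\AAa}$ with $P_{\tilde{\AAa}}\subseteq Q$ and transition matrix $M_F=M_g$. On pieces with $A\in\BBb$ one sets $F|_{\tilde A}=g\circ r_A$, where $r_A\colon\tilde A\to A$ is the $(\gamma,1)$-LEL$^*$ retraction of Lemma~\ref{L:sp12a-arc}; this keeps $\Lip(F|_{\tilde A})\le\lambda$. On the finitely many pieces with $A\in\DDd=\{A_i^N,A_n^{N-4},A_n^{N-3},A_n^{N-1,l}\}$ one uses a genuine LEL map $\tilde A\to\tilde B$ from Proposition~\ref{P:Sp12a-ThmD}, with larger Lipschitz constant $2\lambda L_2$. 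Exactness then comes from the length-expansion of these LEL maps (cycling the complicated pieces among themselves) rather than from a single surjection onto $X$, and the combinatorial bound $\theta_\BBb\le 2/(N-5)$ of \eqref{EQ:theta_B} makes the entropy correction $2\theta_\BBb\log(2\lambda L_2)$ vanish as $N\to\infty$.
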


Before going into the details of the proof, let us outline the main steps of it.
We write $T$ in the form $T = R\sqcup \bigsqcup_{i=1}^n (s_i,t_i]$,
where $R$ is homeomorphic to $T$,
arcs $[s_i,t_i]$ have non-empty interiors
and $s_i,t_i$ are end points of $R,T$, respectively.
We may assume that the $(P,S)$-linear map $f$ is defined on $R$
and $S=(s_0,s_1,\dots,s_n)$ for some $s_0\in P$.
Using Proposition~\ref{P:exactExtensionToTree},
we construct an exactly Devaney chaotic $Q$-linear map $g:T\to T$
with $h(g)<\max\{h(f),(1/n)\log 2\}+\eps$.

Using the $Q$-basic arcs $A$ of $T$ we construct a splitting $\tilde{\AAa}$
of $X$ into subcontinua $\tilde{A}$ in such a way that most of
the members of $\tilde{\AAa}$ are free arcs; only subcontinua $\tilde{A}$ corresponding
to $Q$-basic arcs $A$ for which either $A\subseteq R$ or $A\ni t_i$,
may be ``complicated'' (that is, they need not be free arcs).

Then we construct a map $F:X\to X$ in three steps, see (\ref{EQ:thmA:def_of_F}).
First, on most of the subcontinua $\tilde{A}$
which are free arcs we put $F(x)=g(x)$. Second,
for subcontinua $\tilde{A}$ which either contain an end point of $T$ or are
not subsets of $R$ and $g(A)$ is a subset of some ``complicated'' $\tilde{B}$
(see the definition of $\DDd$ in (\ref{EQ:thmA:defOfCCc})), we
use Propositions~\ref{P:Sp12a-ThmC} and \ref{P:Sp12a-ThmD} to define $F$ in such a way
that the restrictions $F|_{\tilde{A}}$ are LEL-maps.
Finally, on the remaining ``complicated'' subcontinua $\tilde{A}$, (that is, for those
$\tilde{A}$ with $A$ being a $Q$-basic arc of $R$),
we define $F$ to be the composition of $g$ with a
Lipschitz-1 retraction of $\tilde{A}$ onto $A$, obtained from Lemma~\ref{L:sp12a-arc}.

During the construction of $F$ we also define a convex metric $d$ on $X$
(see (\ref{EQ:thmA:def_d})), mainly
using Lemma~\ref{L:parryMapOnTree}
(to obtain a convex metric $d_T$ on $T$ such that $g$ has constant slope)
and Proposition~\ref{P:Sp12a-ThmC} (to obtain a convex metric on ``complicated'' subcontinua
$\tilde{A}$). Then we prove that
the map $F$ is $Q$-Lipschitz w.r.t.~this metric and the splitting $\tilde{\AAa}$,
see (\ref{EQ:thmA:F-is-Q-Lipschitz}). Using properties of $g$,
mainly the inequality (\ref{EQ:theta_B}),
and the fact that
the transition matrices $M_F$ and $M_g$ of $F$ and $g$ coincide,
Proposition~\ref{P:entropyOfPLipschitz}
gives us that the entropy of $F$ can be only slightly larger than that
of $g$, see (\ref{EQ:thmA:entropyFN}).
Finally, we prove that $F$ is exactly Devaney chaotic.

\begin{proof} 
Let $X$ be a dendrite and let $T\subseteq X$ be a tree with
end points $t_1,\dots,t_n$ and with
every terminal edge having non-empty interior in $X$.
So we may write
$$
 T = R\sqcup \bigsqcup_{i=1}^n (s_i,t_i],
$$
where $R$ is homeomorphic to $T$,
$\Endpoints(R)=\{s_1\dots,s_n\}$ and
for every $i$ there is $u_i\in (s_i,t_i)$ such that $[s_i,u_i]$ is a free arc in $X$.
By the assumption
there are $P\subseteq R$ and a $(P,S)$-linear Markov map $f:R\to R$,
where $S=(s_0,s_{\pi(1)},\dots,s_{\pi(n)})$ for some $s_0\in R$
and a permutation $\pi$ of $\{1,\dots,n\}$.
By reordering the indices, if necessary, we may
assume that $\pi$ is the identity, that is, $S=(s_0,s_1,\dots,s_n)$.

Let $N>6$, let $g=g_N:T\to T$ be the map
constructed in the proof of Proposition~\ref{P:exactExtensionToTree}
and $\lambda=\lambda_N$ be such that $h(g)=\log\lambda$; then
\begin{equation}\label{EQ:thmA:limsup_lambdaN}
 \limsup_{N\to\infty} \lambda_N
 \le
 \exp \left(  \max\{h(f), (1/n)\log 2\}  \right).
\end{equation}
We will keep the notation from the proof of Proposition~\ref{P:exactExtensionToTree}
(see e.g.~Figures~\ref{Fig:P:exactExtensionToTree-map} and
\ref{Fig:P:exactExtensionToTree-graph});
without loss of generality we may assume that $u_i=t_i^{N-1}$ for every $i$.
Let $\AAa$ be the set of all $Q$-basic arcs of $g$, see (\ref{EQ:Q-basicArcs}).
Put
\begin{equation}\label{EQ:thmA:defOfCCc}
 \DDd = \{A_i^N,A_n^{N-4},A_n^{N-3}, A_n^{N-1,l}:\ i\le n, l\le m\},\qquad
 \BBb=\AAa\setminus\DDd.
\end{equation}
We first prove that
\begin{equation}\label{EQ:theta_B}
 \theta_\BBb\le \frac{2}{N-5}
\end{equation}
where $\theta_{\BBb} = \limsup_{h\to\infty} ({k_h}/{h})$,
$k_h
 =\max_\CCC \card\{i<h:\ C_i\not\in\BBb\}$
 and the maximum is taken over all ${\CCC=(C_0,\dots,C_{h-1})\in\AAa^h}$.
To this end take $h\in\NNN$ and $(C_0,\dots,C_{h-1})\in\AAa^h$.
If $i<h$ is such that $C_i\in\DDd$
then there are $l\in\{i,\dots, i+n\}$ and
$l'\ge \min\{h-1,l+n(N-5)\}$ such that
$$
 C_{j}\in\BBb
 \qquad\text{ for every }
 l<j\le l'.
$$
(Indeed, to define $l$, distinguish two cases:
if $g(C_i)\subseteq R$ put $l=i$; otherwise
$g^{n'}(C_i)=A_1^1$ for some $1\le n'\le n+1$ and put $l=i+n'-1$.
If there is no $h>i$ with $C_h\in\DDd$, put $l'=h-1$.
Otherwise let $l'\ge i$ be the smallest integer such that $C_{l'+1}\in\DDd$.
Then the path $(C_{l+1},C_{l+2},\dots,C_{l'})$
contains subpath $(A_1^1,\dots,A_n^1; A_1^2,\dots,A_n^2; \dots;
A_1^{N-5},\dots, A_n^{N-5})$; hence $l'-l\ge n(N-5)$.)

Denote by  $i_1<\dots<i_k$ those indices $i$ for which
$C_i\in\DDd$. Then the previous observation
easily implies that
$i_{j+n+1}-i_j>n(N-5)$ for every $j<k-n$.
Hence $h>i_{1+r(n+1)}-i_1>rn(N-5)$,
where $r=\lfloor(k-1)/(n+1)\rfloor$.
Thus $(k-1)/(n+1)-1 < r < m/(n(N-5))$
 and so
\begin{equation}\label{EQ:thmA:km}
 \frac{k_h}{h}
 < \frac{2}{N-5} + \frac{n+2}{h}\,.
\end{equation}
From this (\ref{EQ:theta_B}) follows.

\medskip

To construct our $Q$-Lipschitz map $F:X\to X$, we first
define a splitting $\tilde{\AAa}=\{\tilde{A}:\ A\in\AAa\}$
of $X$.
For every $t\in T$ we denote by $X_t$
the union of $\{t\}$ and of all components of $X\setminus \{t\}$ disjoint with $T$.
For any subset $M$ of $T$ put
$$
 X_M = \bigsqcup_{t\in M} X_t.
$$
For every arc $A\in \AAa$ we define a subcontinuum $\tilde{A}$ of $X$
in such a way that the following hold:
\begin{itemize}
	\item the system $\tilde{\AAa}=\{\tilde{A}:\ A\in\AAa\}$ is a splitting
	  of $X$ into continua;
	\item $\tilde{A}\supseteq A$, and
		$\tilde{A} = A$ for every $A=A_i^j$ with $j\ne N$
	 	and for every $A=A_n^{N-1,l}$;
	\item $X_{\interiorRel{T}{A}} \subseteq \tilde{A} \subseteq X_A$
	 for every $A\in\AAa$;
	\item for every $t\in T$ there is $A\in\AAa$ such that $X_t\subseteq \tilde{A}$;
	\item if $A\ne B$ then $\tilde{A}\cap \tilde{B}=A\cap B$;
	  hence $P_{\tilde\AAa}=\bigcup_{\tilde{A}\ne\tilde{B}} (\tilde{A}\cap \tilde{B})
	   \subseteq Q$.
\end{itemize}
Notice that if $A=[t,u]\in\AAa$ then $\tilde{A}$ is one of the following four sets:
$X_{(t,u)}      \sqcup  \{t\}  \sqcup  \{u\}$,
$X_{(t,u)}      \sqcup  X_t    \sqcup  \{u\}$,
$X_{(t,u)}      \sqcup  \{t\}  \sqcup  X_u$,
$X_A=X_{(t,u)}  \sqcup  X_t    \sqcup  X_u$.

Now we define a convex metric $d=d_N$ on $X$.
Let $d_T$ be a convex metric on $T$
such that
\begin{equation}\label{EQ:thmA:def_of_dT}
 g:(T,d_T)\to(T,d_T)
 \qquad\text{ is } Q \text{-linear with constant slope }\lambda=\lambda_N > 1,
\end{equation}
see Lemma~\ref{L:parryMapOnTree}.
Put $l_A=\lengthd{d_T}{A}$ for $A\in \AAa$; then
\begin{equation}\label{EQ:thmA:length_g(A)}
 \lengthd{d_T}{g(A)} = \sum\limits_{B:\ A\tofsmall{g} B} l_B
 =
 \lambda\cdot \lengthd{d_T}{A} = \lambda\cdot l_A
 \qquad
 \text{for every } A\in\AAa.
\end{equation}
For $A=[a_0,a_1]\in\AAa$ let $d_{\tilde{A}}=d_{\tilde{A},a_0,a_1}$
and $\CCc_{\tilde{A}}=\CCc_{\tilde{A},a_0,a_1}$ be a convex metric on $\tilde{A}$
and a dense system of subcontinua of $\tilde{A}$ obtained
from Proposition~\ref{P:Sp12a-ThmC} (with $A_1=A$);
by (e) we may assume that $d_T|_{A\times A} = c_A\cdot d_{\tilde{A}}|_{A\times A}$
for some $c_A>0$. If we put
$\tilde{l}_A=\lengthd{d_{\tilde{A}}}{A}$, then $c_A=l_A/\tilde{l}_A$ and
$c_A\in [l_A, 2l_A]$ by Proposition~\ref{P:Sp12a-ThmC}(a) and the choice of $q$.
Let $d$ be the unique convex metric on $X$ with
\begin{equation}\label{EQ:thmA:def_d}
 d|_{\tilde{A}\times\tilde{A}}  = c_A\cdot d_{\tilde{A}}
 \qquad\text{for every } A\in\AAa.
\end{equation}
Notice that $X$ has finite length:
\begin{equation}\label{EQ:thmA:XhasFiniteLength}
 \lengthd{d}{X}
 = \sum_{A\in\AAa} c_A 
 <
 \infty.
\end{equation}

Now we can define the map $F:X\to X$. First, for $A\in\BBb$, $A\not\subseteq R$
we have that $\tilde{A}=A$ and $\tilde{B}=B$ for every $B$ with $A\tofsmall{g} B$;
in this case we put $F_A=g|_A$. Second, if $A\in\BBb$ and $A\subseteq R$,
we put $F_A=g\circ r_A$, where $r_A:\tilde{A}\to A$
is a $(\gamma,1)$-LEL$^*$ retraction from Lemma~\ref{L:sp12a-arc}.
Third, if $A=[a_0,a_1]\in\DDd$, take the unique
$Q$-basic arc $B=[b_0,b_1]\in\AAa$ ($b_i=g(a_i)$ for $i=0,1$)
with $A\tofsmall{g} B$.
Let $f_A:(\tilde{A},d_{\tilde{A}}) \to (\tilde{B},d_{\tilde{B}})$
be a $(2,L_2)$-LEL map obtained from Proposition~\ref{P:Sp12a-ThmD}
and define
\begin{equation*}
 F_A:(\tilde{A},d)\to (\tilde{B},d),
 \qquad
 F_A(x)=f_A(x)
 \quad\text{for } x\in \tilde{A}
\end{equation*}
(here $d$ denotes the corresponding restriction of the metric).
By (\ref{EQ:thmA:length_g(A)}), $l_B=\lambda l_A$; thus
$\Lip(F_A)=(l_B/l_A)\cdot (\tilde{l}_A/\tilde{l}_B) \cdot \Lip(f_A)
\le 2\lambda L_2$
and $F_A$ is length-expanding
w.r.t.~the constant $(l_B/l_A)\cdot (\tilde{l}_A/\tilde{l}_B) \cdot 2
\ge \lambda$.
Thus,
\begin{equation}\label{EQ:thmA:F_A:LEL}
 F_A \text{ is } (\lambda, 2\lambda L_2)\text{-LEL}.
\end{equation}
Finally, define $F=F_N:X\to X$ by
\begin{equation}\label{EQ:thmA:def_of_F}
 F(x) = F_A(x)  \qquad\text{for } x\in \tilde{A}, \ A\in\AAa.
\end{equation}

For $A\in\AAa$ put $L_{\tilde{A}}=\lambda$ if $A\in\BBb$ and
$L_{\tilde{A}}=2\lambda L_2$ if $A\in\DDd$.
Using (\ref{EQ:thmA:F_A:LEL}) and (\ref{EQ:thmA:def_of_dT}) we
have that
\begin{equation}\label{EQ:thmA:F-is-Q-Lipschitz}
 F 
 \text{ is a } Q \text{-Lipschitz map w.r.t.~} \tilde{\AAa}, \,
 (L_{\tilde{A}})_{\tilde{A}\in \tilde{\AAa}}
 \quad\text{and}\quad M_F=M_g.
\end{equation}
Concerning the entropy of $F=F_N$ we use
Proposition~\ref{P:entropyOfPLipschitz}.
Since $M_{F_N}=M_{g_N}$ and $\lambda=\lambda_N$ is the maximal
eigenvalue of $M_{g_N}$, Proposition~\ref{P:entropyOfPLipschitz}
and (\ref{EQ:theta_B}) give
$$
 h(F_N)\le \log \lambda_N + \frac{4\log (2\lambda_N L_2)}{N-5}
 \,.
$$
Since $L_2$ does not depend on $N$,
(\ref{EQ:thmA:limsup_lambdaN}) gives
\begin{equation}\label{EQ:thmA:entropyFN}
 \limsup_{N\to\infty}  h(F_N)
 \le \limsup_{N\to\infty}  \log \lambda_N
 \le \max\{h(f),(1/n)\log 2 \} .
\end{equation}

\medskip

Now we show that $F=F_N$ is exactly Devaney chaotic provided $N$ is sufficiently large.
Put $\CCc=\bigcup_{A\in\AAa} \CCc_{\tilde{A}}$
and fix any $C\in\CCc$. For $k\in\NNN_0$ put $C_k=F^k(C)$ and take $A_k\in\AAa$
such that $C_k\cap \tilde{A}_k$ is non-degenerate.
Let $h\ge 0$ be such that
\begin{equation}\label{EQ:Ck-subsetneq-Ak}
 C_k \subseteq \tilde{A}_k
 \qquad\text{for every }k<h.
\end{equation}
Then, by (\ref{EQ:thmA:def_of_dT}),
(\ref{EQ:thmA:F_A:LEL}) and the choice of $r_A$ ($A\in\BBb,A\subseteq R$),
\begin{equation}\label{EQ:lengthCm}
 \length{C_h} \ge \gamma^{p_h} \cdot\lambda^h \cdot \length{C},
 \quad
 p_h = \card \{ k<h:\ A_k\subseteq R \text{ and } C_k\not\subseteq A_k\}.
\end{equation}
By the definition of $F_A$ for $A\subseteq R$ we have that
if $A_k\subseteq R$ and $C_k\not\subseteq A_k$ for some $0<k<h$,
then $A_{k-1}$ is equal to some $A_n^{N-1,l}$. Hence,
by (\ref{EQ:thmA:km}),
$$
 \frac{p_h}{h}
 \le \frac{k_h+1}{h}
 \le \frac{2}{N-5} + \frac{n+3}{h}
 \,.
$$
Fix a constant $\theta\in (1/\lambda,1)$ and assume that
$N$ is so large that
 $\gamma^{2/(N-5)} >\theta$.
Using (\ref{EQ:lengthCm}) we have
that
$\length{C_h}\ge (\lambda \theta)^h  \cdot \gamma^{n+3} \cdot \length{C}$.
Since $\length{C}>0$, $\lambda \theta>1$ and $X$ has finite length
by (\ref{EQ:thmA:XhasFiniteLength}),
we have that $h$'s satisfying (\ref{EQ:Ck-subsetneq-Ak})
are bounded from above. Hence
there is $h_0$ such that $C_{h_0}\not\subseteq \tilde{A}_{h_0}$
and thus
\begin{equation}\label{EQ:Cm-intersects-Q}
 C_{h_0} \quad\text{intersects } Q \text{ for some }
 h_0\in\NNN.
\end{equation}

Take a subcontinuum $C'$ of $C_{h_0}$ such that $C'\in\CCc$,
$C'\subseteq \tilde{A}_{h_0}$ and $C'\cap Q\ne\emptyset$
(this is possible due to (\ref{EQ:Cm-intersects-Q}) and the definition
of $\CCc$). By analogous arguments we can find $h'>0$ such that
$C''=F^{h'}(C')$ is not a subset of any $\tilde{A}$.
We may assume that $h'$ is the smallest integer with this property,
hence $F^{h'-1}(C')\subseteq \tilde{A}$ for some $A\in\AAa$.
By inspecting the transition graph of $g$ we see that either $A\subseteq R$
or $A\in\{A_n^{N-5}, A_n^{N-2}, A_{n-1}^{N-1,l}:\ l=1,\dots,m\}$.
Since $F^{h'-1}(C')$ intersects $Q$, the set $C''$
contains a $Q$-basic arc $B$ such that either $B\subseteq R$
or $B$ is a free arc in $X$. In the former case we have that
$F^k(B)\supseteq A_1^1$ for some $k$; so in both cases
\begin{equation*}
 \text{there is } h\in\NNN \text{ such that the set }
 C_h
 \text{ contains some } A\in\AAa
 \text{ with } \tilde{A}=A.
\end{equation*}
But now
$C_{h+k}\supseteq \bigcup_{g^k(A)\supseteq B} \tilde{B}$ for every $k>0$.
Since $g$ is exact we have that $C_{h+k}=X$ for every sufficiently large $k$.
Hence $F$ is exact.
Finally, since $X$ has a free arc which disconnects $X$, by
\cite[Theorem~1.1]{AKLS} the map $F$, being transitive,
has dense periodic points.

To summarize, we have proved that
$F_N$ is exactly Devaney chaotic for every sufficiently large $N$
and that
$\limsup_{N\to\infty}  h(F_N) \le \max\{h(f),(1/n)\log 2\}$.
Thus the proposition is proved.
\end{proof}

\medskip

Now we are ready to prove the results stated in the introduction.

\begin{proof}[Proof of Theorem~\ref{T:main}]
Let $X$ be a non-degenerate
dendrite such that no subtree of it contains all free arcs of $X$.
Fix $n=2^r$, $r\ge 1$ and $\eps>0$.
By Lemma~\ref{L:subtreeFreeArcs},
$X$ contains a subtree $T$ which is either an $n$-star or an $n$-comb such that
every terminal edge of $T$ has non-empty interior in $X$.
Since $T\in\TTt_0$, Lemma~\ref{L:YeIsPSLinear}
gives that
there is a $(P,S)$-linear Markov map $f:R\to R$
with $h(f)<(1/n)\log 2+\eps$,
where $P\subseteq T$ and $S=(s_0,s_1,\dots,s_n)$ for some $s_0\in T$
and the end points $s_1,\dots,s_n$ of $T$.
Now Proposition~\ref{P:thmA} gives that
$\IED(X)<(1/n)\log 2 + \eps$. Since $n=2^r$ and $\eps$ are arbitrary, we have $\IED(X)=0$.

The fact that the infima $\IT(X),\IED(X)$ are not attainable, follows
from \cite{DSS12}. Indeed, the space $X$, being a compact metric space which is not
a finite union of disjoint simple closed curve and which contains a free arc, admits no zero entropy transitive map.
\end{proof}

In the proof of Theorem~\ref{T:main} we have used Lemma~\ref{L:YeIsPSLinear},
which is a reformulation of Lemmas~4.6 and 4.7 from \cite{Ye}.
The use of Lemma~\ref{L:YeIsPSLinear} can be replaced by
Lemmas~\ref{L:specialMapsOnStars}, \ref{L:specialMapsOnCombs} which we prove in
Appendix~1. This makes the proof of Theorem~\ref{T:main} independent of
Ye's results.

\begin{proof}[Proof of Corollary~\ref{C:totallyRegular}]
Let $X$ be a non-degenerate completely regular continuum which is not a tree.
If $X$ is a dendrite then $\IED(X)=0$ by Theorem~\ref{T:main}, since
free arcs are dense in $X$ by Lemma~\ref{L:subtreeFreeArcs}.
Otherwise $X$ contains a simple closed
curve $S$. Since $X$ is completely regular, $S$ has non-empty interior and
thus $X$ contains a free arc; of course, this arc does not disconnect $X$.
So $\IED(X)=0$ by \cite{Sp12b}.
\end{proof}

\begin{proof}[Proof of Proposition~\ref{P:upperBoundFreeArc}]
Just use Proposition~\ref{P:thmA} and Lemma~\ref{L:YeIsPSLinear}.
\end{proof}

\section*{Appendix 1: Zero entropy $(P,S)$-linear Markov maps on stars and combs}

The purpose of this appendix is to give a simple construction
of $(P,S)$-linear Markov maps with zero entropy on stars and combs. This makes the proof
of Theorem~A independent on the construction of \cite{Ye}.

\begin{lemma}\label{L:specialMapsOnStars} Let $T$ be an $n$-star with end points $s_1,\dots,s_n$ ($n\ge 2$).
Then there is a map $f:T\to T$, a finite subset $P$ of $T$ and $s_0\in P$ such that
 $f$ is a $(P,S)$-linear Markov map with zero-entropy, where $S=(s_0,s_1,\dots,s_n)$.
\end{lemma}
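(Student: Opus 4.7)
My plan is to construct $f$ explicitly, enlarging $P$ beyond the endpoints so that the $P$-transition graph decomposes into two disjoint directed cycles joined by a single one-way edge. Each cycle will contribute a cyclic permutation block to the transition matrix, so its Perron--Frobenius eigenvalue will be $1$, and Lemma~\ref{L:entropyMarkov} will then give $h(f)=0$. Conditions (a)--(c) of Definition~\ref{D:specialMarkovMap} will be built into the construction or read off the transition graph.

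Assuming first $n\ge 3$, let $b$ be the branch point of $T$. I would choose auxiliary points $t_i\in(b,s_i)$ for $i=1,\dots,n-1$ and $s_0\in(b,s_n)$, and set $P=\{b,t_1,\dots,t_{n-1},s_0,s_1,\dots,s_n\}$. On arm $i<n$ the basic arcs split as $B_i^a=[b,t_i]$ and $B_i^b=[t_i,s_i]$, while arm $n$ yields $B_n=[b,s_0]$ and $A_S=[s_0,s_n]$. I would define $f$ on $P$ by $f(b)=b$, $f(s_i)=s_{i+1}$ for $0\le i<n$, $f(t_i)=t_{i+1}$ for $i<n-1$, $f(t_{n-1})=s_0$, and $f(s_n)=t_1$, extending linearly on each basic arc with respect to some fixed convex metric. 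A direct computation of images then yields the transitions $B_i^a\to B_{i+1}^a$, $B_{n-1}^a\to B_n$, $B_i^b\to B_{i+1}^b$, $B_{n-1}^b\to A_S$, $A_S\to B_1^b$, and $B_n\to B_1^a, B_1^b$, with no other arrows. Consequently the transition graph contains exactly the two simple cycles
\[
 C_1:\ B_1^a\to B_2^a\to\dots\to B_{n-1}^a\to B_n\to B_1^a,
 \qquad
 C_2:\ B_1^b\to B_2^b\to\dots\to B_{n-1}^b\to A_S\to B_1^b,
\]
both of length $n$, linked only by $B_n\to B_1^b$ (from $C_1$ into $C_2$). The transition matrix is therefore block upper-triangular with two $n\times n$ cyclic permutation blocks on the diagonal, so its spectral radius is $1$ and $h(f)=0$. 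Condition (c) also follows, since every basic arc in $C_1$ reaches $A_S$ by travelling around $C_1$ to $B_n$ and then along the link $B_n\to B_1^b$ into $C_2$, while every basic arc in $C_2$ reaches $A_S$ along $C_2$.

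For $n=2$, $T$ is an arc and there is no branch point; an analogous but simpler construction works. I would take $s_0\in(s_1,s_2)$, $t_1\in(s_1,s_0)$, and $P=\{s_1,t_1,s_0,s_2\}$, setting $f(s_0)=s_1$, $f(s_1)=s_2$, $f(s_2)=t_1$, $f(t_1)=s_0$. The transition graph then consists of the $2$-cycle $[s_1,t_1]\leftrightarrow A_S$ together with the arc $[t_1,s_0]$, which carries a self-loop and an outgoing edge to $[s_1,t_1]$; the same block-triangular argument gives spectral radius $1$. The delicate point throughout is the unavoidable doubling $B_n\to B_1^a, B_1^b$, since the image $f(B_n)=[b,s_1]$ necessarily covers both halves of arm~$1$; what makes the entropy vanish is the choice to route its two targets into different cycles $C_1$ and $C_2$, so that the doubling becomes a one-way transition between two permutation cycles rather than a branching inside a single strongly connected component.
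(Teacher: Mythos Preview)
Your proof is correct and follows the same strategy as the paper: insert one auxiliary point on each arm, keep the branch point fixed, and build a $P$-linear map whose transition graph consists of an ``inner'' $n$-cycle and an ``outer'' $n$-cycle joined by a single one-way edge, so that the transition matrix is block upper-triangular with two cyclic permutation blocks and hence has Perron eigenvalue $1$. The paper phrases the final entropy calculation via the rome method (with rome $\{[b,s_0],A_S\}$) rather than your direct block-triangular argument, and does not spell out the $n=2$ case separately, but the underlying construction and the resulting transition graph are the same.
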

\begin{proof}
 Let $b$ be the only branch point of $T$; so $T=\bigcup_{i=1}^n	[b,s_i]$. For every $i$ take a point
 $s_i'\in (b,s_i)$. Put $P=\{b;\ s_1',\dots,s_n';\ s_1,\dots,s_n\}$,
 $s_0=s_n'$ and $S=(s_0,s_1,\dots,s_n)$. Define the $P$-linear Markov map $f$ on $T$ by
 $f(b)=s_1'$, $f(s_n')=s_1$, $f(s_n)=s_1'$ and,
 for every $1\le i<n$, $f(s_i')=s_{i+1}'$ and $f(s_i)=s_{i+1}$
 (see Figure~\ref{Fig:4-star} for $n=4$).

\begin{figure}[ht!]
  \includegraphics{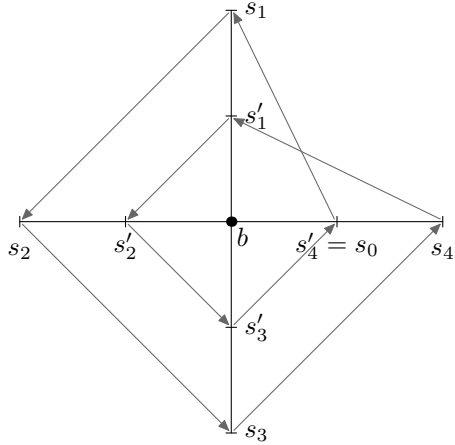}
  \caption{The map $f$ on a $4$-star}
  \label{Fig:4-star}
\end{figure}

 Obviously $f$ is $(P,S)$-linear.
 It is an easy exercise to show that $h(f)=0$.
 Indeed, one can use e.g.~the rome method with the rome
 consisting of two arcs: $A=[b,s_n']$, $B=[s_n',s_n]$.
 Since there is no path from $B$ to $A$ and the length of every simple
 path from $A$ to $A$ and from $B$ to $B$ is $n$,
 the characteristic polynomial of the transition matrix of $f$ is
 (see (\ref{EQ:romeMethod}))
 $$
  (-1)^{2n-2}\cdot x^{2n} \cdot \left( \frac{1}{x^n}-1 \right)^2 = (x^n-1)^2,
 $$
 the largest positive root of which is $\lambda=1$. Hence the entropy of $f$ is $h(f)=\log 1=0$.
\end{proof}

\begin{lemma}\label{L:specialMapsOnCombs}
Let $n=2^r$ ($r\ge 1$) and let $T$ be an $n$-comb.
Then there are a finite set $P$ and a tuple $S=(s_0,s_1,\dots,s_n)$,
where $s_0\in P$ and $\{s_1,\dots,s_n\}=\End(T)$,
such that $T$ admits a $(P,S)$-linear map $f:T\to T$ with zero entropy.
\end{lemma}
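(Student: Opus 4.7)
The plan is to argue by induction on $r$, with $n=2^r$. For the base case $r=1$, the $2$-comb is an arc, which can be regarded as a $2$-star by designating any interior (order-$2$) point as its ``branch point''; Lemma~\ref{L:specialMapsOnStars} with $n=2$ then immediately yields $(P,S,f)$ with $h(f)=0$ and the required $(P,S)$-linear Markov structure.

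For the inductive step, assuming the statement for $r-1$, let $T$ be a $2^r$-comb with spine $v_0v_1\dots v_{2^r-1}$ and teeth $[v_j,t_j]$ for $1\le j\le 2^r-2$. Removing the interior of the central spine edge $(v_{2^{r-1}-1},v_{2^{r-1}})$ realises $T$ as a Ye extension of two $2^{r-1}$-combs $T_1,T_2\in\TTt_0^{2,\dots,2}$ (with $r-1$ twos) over the arc $T_0=[v_{2^{r-1}-1},v_{2^{r-1}}]$, where the gluing points are the order-$2$ vertices $e_1'=v_{2^{r-1}-1}\in T_1$ and $e_2'=v_{2^{r-1}}\in T_2$. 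The inductive hypothesis supplies zero-entropy $(P_i,S_i)$-linear Markov maps $f_i:T_i\to T_i$ with $S_i=(s_0^{(i)},s_1^{(i)},\dots,s_k^{(i)})$, $k=2^{r-1}$, and $\{s_j^{(i)}\}_{j=1}^k=\End(T_i)$. I would first enlarge $P_i$ to contain $e_i'$ (a single extra point that only subdivides one basic arc of $f_i$ and preserves all the required properties), then set $S=(s_0^{(2)},s_1^{(1)},\dots,s_k^{(1)},s_1^{(2)},\dots,s_k^{(2)})$ and $P=P_1\cup P_2\cup F$ for a finite auxiliary set $F\subseteq T_0$, and finally define $f$ to agree with $f_1$ on every basic arc of $T_1$ except the unique one $B_1$ having $s_k^{(1)}$ as an endpoint, to agree with $f_2$ on every basic arc of $T_2$ except the unique one $B_2$ containing $s_0^{(2)}$ on the side opposite to $A_{S_2}$, and to be defined linearly on $B_1$, on $B_2$, and on the basic arcs inside $T_0$ in such a way that a ``cross-chain'' of basic arcs carries the transition $s_k^{(1)}\mapsto s_1^{(2)}$ across $T_0$, and a second cross-chain carries $s_0^{(2)}\mapsto s_1^{(1)}$ in the opposite direction. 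Conditions (a)--(c) of $(P,S)$-linearity then follow routinely: (a) from the prescribed values of $f$ at the $s_i$; (b) since $A_S=A_{S_2}$ remains a $P$-basic arc (no $P$-point has been inserted into its interior) with $s_n=s_k^{(2)}\in\End(T)$; (c) because every basic arc can reach $A_S$ by first applying the $(P_i,S_i)$-linear structure of $f_i$ to reach either $A_{S_i}$ or an entry-point of a cross-chain, and then traversing a cross-chain if necessary.

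The main obstacle will be verifying $h(f)=0$. The plan, following the pattern of Lemma~\ref{L:specialMapsOnStars}, is to apply the rome method (see Subsection~\ref{SS:PerronFrobenius}) to $M_f$ with a rome $\RRr$ consisting of $B_1$, $B_2$, $A_S$, and the vertices of $T_0$ that lie on the two cross-chains. Every loop in the transition graph of $f$ then either lies entirely inside the $T_1$-block (and hence inside the transition graph of $f_1$, of spectral radius at most $1$ by the inductive hypothesis), lies entirely inside the $T_2$-block (similarly), or crosses $T_0$ and so passes through all elements of $\RRr$ in a fixed cyclic order. Exactly as the star computation produced the characteristic polynomial $(x^n-1)^2$, the rome-method determinant will factor as the product of the $T_1$- and $T_2$-block characteristic polynomials together with a cross-factor of the form $x^L-1$, where $L$ is the cross-cycle length; since all these factors have roots only on the unit circle, the Perron value of $M_f$ equals $1$ and therefore $h(f)=0$. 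The two delicate points in the write-up will be the explicit placement of the auxiliary $P$-points in $F\subseteq T_0$ so that each basic arc of the cross-chains maps linearly onto a single basic arc, and the verification that the modifications of $f_1,f_2$ on $B_1,B_2$ introduce no unintended short loops; both of these will closely mirror the pattern of the star construction.
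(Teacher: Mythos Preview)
Your inductive strategy is natural, but the proposal has a genuine gap at the step where you redefine $f$ on $B_1=A_{S_1}$.  Condition~(a) of Definition~\ref{D:specialMarkovMap} requires $f(s_k^{(1)})=s_1^{(2)}$ \emph{in one step}; there is no room for a ``cross-chain'' to carry this transition gradually across $T_0$.  Since $s_k^{(1)}$ is an end point of $T$, the only $P$-basic arc containing it is $A_{S_1}=[s_0^{(1)},s_k^{(1)}]$, and the other endpoint $s_0^{(1)}$ lies in unmodified basic arcs of $f_1$, so $f(s_0^{(1)})=f_1(s_0^{(1)})=s_1^{(1)}$ is forced.  Hence, whatever subdivision you choose, $f(A_{S_1})$ must be the arc $[s_1^{(1)},s_1^{(2)}]$, which traverses a tooth of $T_1$, a long spine segment through $T_0$, and a tooth of $T_2$.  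This produces many outgoing edges from (sub-arcs of) $A_{S_1}$, in particular to basic arcs inside $T_1$ that, by condition~(c) for $f_1$, already have paths back to $A_{S_1}$.  You therefore obtain several distinct loops passing through $A_{S_1}$, and the spectral radius of such a transition matrix is typically strictly greater than $1$; the claimed factorisation of the rome determinant into $\chi_{M_{f_1}}\cdot\chi_{M_{f_2}}\cdot(x^L-1)$ does not follow, because with $A_{S_1}\to T_1$-arcs and $B_2\to T_1$-arcs you no longer have a block-triangular structure, and loops need not visit all of $\RRr$ in a fixed cyclic order.  (There is also a minor issue: the inductive hypothesis gives no control over the order of $s_0^{(2)}$ in $T_2$, so ``the unique $B_2$ on the side opposite to $A_{S_2}$'' may not be well defined.)

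By contrast, the paper does not argue by induction at all: it gives a single explicit construction on the $2^r$-comb, labelling the $P$-points by binary words and defining $f$ via the operation ``add $1$ with carry'' on those words.  The point of this coding is precisely to avoid the long image arc above: one checks directly that the only loops in the transition graph are the cycles $A_{0^k}\to\cdots\to A_{1^k}\to A_{0^k}$ (one for each level $k\le r$) and $C_{0^r}\to\cdots\to C_{1^r}\to C_{0^r}$, so every basic arc lies on at most one loop and $h(f)=0$ is immediate.  If you want to keep an inductive framework you would need a much stronger inductive hypothesis that pins down the full transition graph of $f_i$ (not just $h(f_i)=0$ and conditions (a)--(c)), effectively rebuilding the paper's binary-word picture; as written, the information carried by the induction is not enough to control the new loops created when you glue.
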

\begin{proof}
First we introduce some notation for distinguished points of $T$, which will form $P$.
Put $\Sigma=\{0,1\}$ and $\Sigma_k^l = \bigcup_{i=k}^l \Sigma^i$ for
$0\le k \le l \le r$, where $\Sigma^0$ is the singleton $\{\theta\}$
($\theta$ denotes the empty word);
that is, $\Sigma_k^l$ is the set of all words $\alpha=\alpha_0\dots\alpha_{i-1}$
over the alphabet $\Sigma$ of the length $\abs{\alpha}=i\in\{k,\dots,l\}$.
The concatenation $\alpha\beta$ and the powers $\alpha^k$
for words $\alpha,\beta$ and integers $k\ge 1$ are defined in a natural way.
For every $k$ let $<$ denote the lexicographical ordering on $\Sigma^k$.
Notice that $\Sigma^k$ with the addition from the left to the right is a group;
for $\alpha\in\Sigma^k$ and $n\in\ZZZ$ we define
$\alpha+n$ as usual.

Fix $r\ge 1$ and $n=2^r$.
Put $a_\theta=1/2$ and
$$
 a_\alpha=\frac{\alpha_0}{2} + \frac{\alpha_1}{2^2}
   + \dots + \frac{\alpha_{k-1}}{2^k} + \frac{1}{2^{k+1}}
 \qquad \text{for } \alpha=\alpha_0\dots\alpha_{k-1}\in\Sigma_1^{r-1}.
$$
For $\alpha\in\Sigma_0^{r-1}$ put $b_{\alpha 0} = a_\alpha -2^{-(r+2)}$ and
$b_{\alpha 1} = a_\alpha +2^{-(r+2)}$.
Notice that the following are true:
\begin{enumerate}
	\item[(i)]   $b_{\alpha 0}<a_\alpha < b_{\alpha 1}$ for $\alpha\in\Sigma_0^{r-1}$;
	\item[(ii)]  $b_\beta < b_\gamma$ for every $\beta < \gamma$ from $\Sigma_1^r$
	  of the same length;
	\item[(iii)] $b_{\alpha 0 \beta} < b_{\alpha 0} < b_{\alpha 1} < b_{\alpha 1 \beta}$
	  for every $\alpha\in\Sigma_0^{r-2}$ and $\beta\in\Sigma_1^{r-\abs{\alpha}-1}$.
\end{enumerate}
Identify the points $a_\alpha,b_\beta\in (0,1)$ with the points
$(a_\alpha,0), (b_\beta,0)$ of the Euclidean plane and define $c_\gamma=(b_\gamma,1)$
for $\gamma\in \Sigma^r$. Without loss of generality we may assume that
the $n$-comb $T$ is given by
$$
 T = A \cup \bigcup_{\gamma\in\Sigma^r} B_\gamma,
$$
where $A=[b_{0^r},b_{1^r}]$ and $B_\gamma=[b_\gamma,c_\gamma]$ ($\gamma\in\Sigma^r$)
are segments; see Figure~\ref{Fig:4-comb} for an illustration.
Put
$$
 P =
 \{a_\alpha:\ \alpha\in\Sigma_0^{r-1}\}
 \cup
 \{b_\beta:\ \beta\in\Sigma_1^{r}\}
 \cup
 \{c_\gamma:\ \gamma\in\Sigma^{r}\}.
$$
Obviously, $P\supseteq \End(T)\cup\Branchpoints(T)$
since $\End(T) = \{c_\gamma:\ \gamma\in\Sigma^r\}$ and
$\Branchpoints(T) = \{b_\gamma:\ \gamma\in\Sigma^r,\, \gamma\ne 0^r,1^r\}$.
The $P$-basic arcs are
\begin{itemize}
	\item $A_{\alpha 0}=[b_{\alpha 0},a_\alpha]$, $A_{\alpha 1}=[a_\alpha,b_{\alpha 1}]$
	  for $\alpha\in\Sigma_0^{r-1}$;
	\item $B_{\alpha 0}=[b_{\alpha 0 1^k}, b_{\alpha 0}]$,
	  $B_{\alpha 1}=[b_{\alpha 1}, b_{\alpha 1 0^k}]$
	  for $\alpha\in\Sigma_0^{r-2}$ and $k=r-\abs{\alpha}-1$;
	\item $C_\gamma = [b_\gamma,c_\gamma]$ for $\gamma\in\Sigma^r$.
\end{itemize}
For a $P$-basic arc $D$, we define the \emph{level} of it, denoted by $\lev{D}$,
as the length of its index;
e.g.~$\lev{B_{\alpha 0}}=\abs{\alpha 0}$ and $\lev{C_\gamma}=r$.
For $1\le k\le r$ denote by $\TTx_k$ the system of all $P$-basic arcs $D$ with $\lev{D}\ge k$;
put $\TTx_{r+1}=\emptyset$.

\begin{figure}[ht!]
  \includegraphics{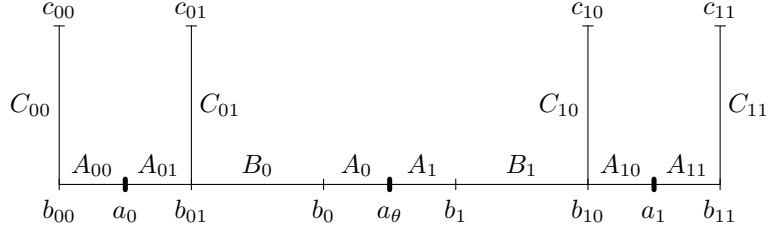}
  \caption{The $n$-comb $T$ for $n=2^2$}
  \label{Fig:4-comb}
\end{figure}

Define the map $f:T\to T$ by
\begin{enumerate}
	\item[(a)] $f(a_\alpha)= a_{\alpha+1}$ for $\alpha\in\Sigma_0^{r-1}$;
	\item[(b)] $f(b_{1^r})=c_{0^r}$, $f(b_{1^k})=b_{0^{k+1}}$ for $k<r$ and
	  $f(b_\beta)=b_{\beta+1}$ for $\beta\in\Sigma_1^r\setminus\{1^k:\ 1\le k\le r\}$;
	\item[(c)] $f(c_{1^r})=b_{0^r}$ and $f(c_\gamma)=c_{\gamma+1}$ for $\gamma\in\Sigma^r$,
	  $\gamma\ne 1^r$;
	\item[(d)] $f$ is $P$-linear.
\end{enumerate}
A simple analysis shows that the edges of the transition graph $G$ of $f$ are as follows
(here we write $D\to \TTx_k$ if $D$ $f$-covers zero ore more arcs from $\TTx_k$,
$1\le k \le r+1$):
\begin{itemize}
	\item $A_\alpha \to A_{\alpha+1}$ for $\alpha\not\in\{1^k: k\le r\}$;
	\item $A_{1^k} \to A_{0^k}, B_{0^{k}}, A_{0^{k+1}}, \TTx_{k+1}$ for $k<r$;
	\item $A_{1^r} \to A_{0^r}, C_{0^{r}}$;
	\item $B_\beta \to B_{\beta+1}, \TTx_{\abs{\beta}+1}$ for $\beta\not\in\{1^k: k< r\}$;
	\item $B_{1^k} \to A_{0^{k+1}}, \TTx_{k+1}$ for $k<r$;
	\item $C_\gamma \to C_{\gamma+1}$ for every $\gamma$.
\end{itemize}
Hence the loops of $G$ are
\begin{itemize}
	\item $A_{0^k}\to A_{0^k+1}\to \dots \to A_{1^k}\to A_{0^k}$, $k=1,\dots,r$;
	\item $C_{0^r}\to C_{0^r+1}\to \dots \to C_{1^r}\to C_{0^r}$.
\end{itemize}
Since every $P$-basic arc is contained in at most one loop of $G$, the entropy
of $f$ is zero.

Put $S=(s_0,s_1,\dots,s_{n})$, where $s_0=b_{1^r}$ and $s_i = c_{0^r + (i-1)}$
($i=1,\dots,n$).
To finish the proof we need to show that
$f$ is a $(P,S)$-linear Markov map. Indeed,
for $0\le i<n$ we have $f(s_i)=s_{i+1}$. Moreover, $s_{n}=c_{1^r}$ is an end point of $T$
and $[s_0,s_{n}]=C_{1^r}$ is a $P$-basic arc.
Finally, from the description of the transition graph of
$f$ we see that there is a path
from any $P$-basic arc $D$ to $C_{1^r}=[s_0,s_n]$.
\end{proof}

\section*{Appendix 2: Trees with large number of end points}

The lower and upper bounds for the infimum of entropies of transitive systems on a given tree $T$,
see \cite{ABLM, Ye, Bal01}, suggest that if the number of end points of $T$ is ``large''
then $T$ admits a transitive map with ``small'' entropy. In this appendix we show that this is in fact true,
even with transitivity replaced by exactness.
Moreover, we give an upper bound for the infimum depending only on the number of end points of the tree. This bound is in no sense optimal but it is very simple.
(In what follows, $\log$ means the natural logarithm.)

\begin{proposition}\label{P:limitOfI(T_n)} Let $T$ be a tree with $n$ end points. Then
\begin{equation}\label{EQ:explicitUpperBound}
 \frac{\log 2}{n}
 \le \IT(T)
 \le \IED(T)
 \le \frac{\log 2}{\sqrt{\log n}} \,.
\end{equation} 
Hence, if $(T_n)_n$ is a sequence of trees
with the number of end points going to infinity, then
$$
 \lim\limits_{n\to\infty} \IED(T_n) = 0.
$$
\end{proposition}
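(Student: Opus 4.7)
The lower bound $\log 2/n \le \IT(T)$ is the main theorem of \cite{ABLM}, and the middle inequality $\IT(T) \le \IED(T)$ is immediate from the definitions. The final sequence statement follows automatically from the upper bound, so the real work is to prove $\IED(T) \le \log 2/\sqrt{\log n}$.

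My plan is to locate a subtree $T_0 \in \TTt_0$ inside $T$ having at least $\sqrt{\log n}$ endpoints, and then apply Proposition~\ref{P:upperBoundFreeArc} with $X = T$. The hypothesis that every terminal edge of $T_0$ has non-empty interior in $T$ will be automatic: since $T$ has only finitely many branch points, every non-degenerate arc in $T$ contains ordinary points of $T$ in its relative interior, and each such point lies in the topological interior of the arc viewed as a subset of $T$.

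To construct $T_0$, I would set $m = \lceil 2\sqrt{\log n}\rceil$ and split into two cases according to the maximal branch order in $T$. If some branch point of $T$ has order at least $m$, take $T_0$ to be an $m$-star around it; this belongs to $\TTt_0$ and has $m \ge \sqrt{\log n}$ endpoints. Otherwise every branch point of $T$ has order less than $m$, and the elementary handshake identity for trees, $\sum_{b}(\operatorname{ord}(b)-2) = n-2$ summed over branch points, forces at least $(n-2)/(m-2)$ branch points. For $n$ larger than a fixed threshold, this number exceeds $m^{m-3}$ (the former grows like $n/\sqrt{\log n}$, the latter like $\exp(\sqrt{\log n}\,\log\log n)$), so Lemma~\ref{L:limitOfI(T_n)-2} produces an $m$-comb inside $T$. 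Selecting the first $2^r$ consecutive teeth with $r = \lfloor \log_2 m\rfloor$ yields a $2^r$-comb $T_0 \in \TTt_0$ with $2^r > m/2 \ge \sqrt{\log n}$ endpoints. Either way Proposition~\ref{P:upperBoundFreeArc} delivers $\IED(T) \le \log 2/\sqrt{\log n}$. The finitely many small values of $n$ for which the threshold is not met are absorbed by the unconditional bound $\IED(T) \le (1/2)\log 2$, furnished by the ``in particular'' clause of Proposition~\ref{P:upperBoundFreeArc} applied to any free arc of $T$; this already dominates $\log 2/\sqrt{\log n}$ whenever $\log n \le 4$, and the narrow transition range can be handled either by tightening the constant in $m$ or by the trivial observation that any tree with even a single branch point already contains a $3$-star in $\TTt_0$.

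The only mildly delicate point is the calibration of $m$: the factor of two in $m \approx 2\sqrt{\log n}$ is there precisely to absorb the halving incurred when passing from an $m$-comb to its largest inscribed $2^r$-comb, so that both branches of the dichotomy deliver at least $\sqrt{\log n}$ endpoints in $T_0$. Everything else is a direct application of machinery already in the paper, and the sequence conclusion $\lim_n \IED(T_n) = 0$ is then immediate.
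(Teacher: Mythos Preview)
Your approach is essentially the paper's: dichotomize on the maximal branch order, extract a large star or comb, and apply Proposition~\ref{P:upperBoundFreeArc}. The only real difference is calibration --- the paper takes the integer $k$ with $\sqrt{\log n}-1\le k<\sqrt{\log n}$, proves $k^{2k}<n$ directly from the elementary inequality $2x\log x<x^2$, and uses Lemma~\ref{L:limitOfI(T_n)-4} (rather than the handshake identity) to get at least $n/k>k^{2k-1}$ branch points, so the argument runs uniformly for all $n\ge 3$ without the small-$n$ patching and the asymptotic comparison that your sketch leaves vague.
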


Before proving Proposition~\ref{P:limitOfI(T_n)} we prove the following
simple lemma.

\begin{lemma}\label{L:limitOfI(T_n)-4} Let $T$ be a tree with $n\ge 3$ end points
and such that every branch point of it has order at most $k$. Then $T$ has at least $n/k$ branch points.
\end{lemma}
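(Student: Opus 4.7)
The plan is to reduce this to a standard degree-sum argument on the combinatorial tree associated to $T$. Consider the tree-graph $G$ whose vertex set is $V=\End(T)\cup\Branch(T)$ and whose edges are the (maximal) free arcs joining two such vertices; then every end point has degree $1$ in $G$, every branch point has degree equal to its order in $T$ (hence in $[3,k]$), and $G$ is a tree on $n+b$ vertices, where $b=\card\Branch(T)$.

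First I would apply the handshake lemma together with the fact that a tree on $n+b$ vertices has $n+b-1$ edges, obtaining
$$
 n + \sum_{v\in\Branch(T)} \deg_G(v) \;=\; 2(n+b-1).
$$
Since each branch point has order at most $k$, this gives
$$
 n + 2b - 2 \;=\; \sum_{v\in\Branch(T)} \deg_G(v) \;\le\; kb,
$$
that is, $b\ge (n-2)/(k-2)$ (note $k\ge 3$ by the definition of a branch point).

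Then I would split into two cases to conclude $b\ge n/k$. If $n\ge k$, the elementary inequality $(n-2)/(k-2)\ge n/k$ holds since it is equivalent to $n\ge k$; combined with the previous step this yields $b\ge n/k$. If $n<k$, the desired bound $n/k<1$ is weaker than $b\ge 1$, and the latter holds because a tree with at least three end points cannot be an arc and hence has at least one branch point.

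The only mildly delicate point is the second case, which is why the hypothesis $n\ge 3$ is used; apart from that the argument is a clean count and I do not anticipate a genuine obstacle.
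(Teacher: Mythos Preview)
Your argument is correct. The degree-sum count on the combinatorial tree $G$ with vertex set $\End(T)\cup\Branch(T)$ is sound: $G$ is indeed a tree on $n+b$ vertices, the handshake identity gives $\sum_{v\in\Branch(T)}\deg_G(v)=n+2b-2$, and bounding each summand by $k$ yields $b\ge(n-2)/(k-2)$. The case split $n\ge k$ versus $n<k$ cleanly finishes the claim, and you correctly identify where the hypothesis $n\ge3$ is used.

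The paper takes a different route: it proves the lemma by induction on $n$, removing a branch point $b$, applying the inductive hypothesis to the non-arc components of $T\setminus\{b\}$, and reassembling the counts. Your approach is genuinely different and in fact more efficient: it avoids induction entirely, and it produces the sharper inequality $b\ge(n-2)/(k-2)$ as an intermediate step, whereas the paper's induction is tailored to land exactly on $n/k$. The paper's method, on the other hand, stays within the topological setting without passing to an auxiliary graph, which keeps it self-contained with respect to the conventions already in play. Both arguments are short; yours is the standard combinatorial proof and arguably the more transparent one.
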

\begin{proof} We prove the lemma by induction. If $n=3$ then $T$ is a $3$-star and the assertion is trivial ($k\ge 3$,
hence $n/k\le 1$). Assume that for some $n>3$ the assertion of the lemma holds for every tree (which is not an arc)
with the number of end points strictly smaller than $n$. Let $T$ be a tree with $n$ end points. Take a branch point
$b$ of $T$ (there is some since $T$ is not an arc).
Let $U_1,\dots, U_m$ ($m\le k$) be the components of $X\setminus\{b\}$; without loss of generality
we may assume that there is an integer $0\le m'\le m$ such that
$U_i$ is not an arc for every $i\le m'$ and $U_i$ is an arc for every $i> m'$.
For every $i\le m'$ put $n_i=\card{E(\closure{U_i})}$; then $n_i<n$ and so
the tree $\closure{U_i}$ has at least $n_i/k$ branch points by the induction hypothesis.
Easily we have that $n = \sum_{i=1}^{m'} (n_i-1) + (m-m')$, so $\sum_{i=1}^{m'} n_i \ge n-(m-2m') \ge n-k$. The
number of branch points of $T$ is at least $1+\sum_{i=1}^{m'} n_i/k\ge n/k$.
\end{proof}

\begin{proof}[Proof of Proposition~\ref{P:limitOfI(T_n)}] 
The lower bound from (\ref{EQ:explicitUpperBound}) was shown in \cite{ABLM}.
To show the upper bound take any tree $T$ with $n$ end points.
If $n=2$ we have $\IED(T)=(1/2)\log 2$,
so we may assume that $n\ge 3$.
Denote by $k\ge 1$ the only integer such that $\sqrt{\log n}-1 \le k <\sqrt{\log n}$.
By standard techniques one can show that
$2x\cdot\log x < x^2$ for $x\ge 1$
(indeed, $f(x)=x^2-2x\cdot \log x$ is strictly increasing on $[1,\infty)$
and $f(1)=1>0$),
hence $k^{2k} <  n$.

If $T$ has a branch point of order
at least $k+1$ then it contains a $(k+1)$-star, hence
$\IED(T)\le \log2 /(k+1)\le \log2 / \sqrt{\log n}$
by Proposition~\ref{P:upperBoundFreeArc} and the choice of $k$.
Otherwise every branch point of $T$ has order at most $k$.
By Lemma~\ref{L:limitOfI(T_n)-4} the tree $T$
has at least $n/k>k^{2k-1}$ branch points,
so by Lemma~\ref{L:limitOfI(T_n)-2} it contains a $(2k+2)$-comb.
Let $r\ge 1$ be the largest integer
such that $2^r\le 2k+2$; hence $2^{r+1}>2k+2$, that is, $2^r> k+1\ge \sqrt{\log n}$.
By Proposition~\ref{P:upperBoundFreeArc}
we have that $\IED(T)\le \log2/2^r < \log 2/\sqrt{\log n}$.
Hence the proof of (\ref{EQ:explicitUpperBound}) is finished.
\end{proof}


\end{document}